\documentclass[12pt,english,fleqn,liststotoc,bibtotoc,idxtotoc,BCOR7.5mm,tablecaptionabove]{amsart}
\setcounter{tocdepth}{4}
\setcounter{secnumdepth}{4}
\usepackage[T1]{fontenc}
\usepackage[latin1]{inputenc}
\usepackage{geometry}
\geometry{verbose,tmargin=2.2cm,bmargin=2.2cm,lmargin=2.0cm,rmargin=2.0cm}
\pagestyle{headings}
\usepackage{color}
\usepackage{amsthm}
\usepackage{amstext}
\usepackage{amssymb}
\usepackage{amsmath}
\usepackage{graphicx}
\usepackage{young}
\usepackage{xypic}
\usepackage{amscd}
\usepackage[section]{placeins}
\usepackage[svgnames]{xcolor}
\usepackage[unicode=true,
bookmarks=true,bookmarksnumbered=true,bookmarksopen=false,
breaklinks=false,pdfborder={0 0 1},backref=false,colorlinks=true]
{hyperref}
\hypersetup{pdftitle={},
	pdfauthor={Nick Early},
	pdfsubject={},
	pdfkeywords={},
	linkcolor=black, citecolor=black, urlcolor=blue, filecolor=blue,  pdfpagelayout=OneColumn, pdfnewwindow=true,  pdfstartview=XYZ, plainpages=false, pdfpagelabels}
\usepackage{breakurl}

\usepackage{scalerel}

\makeatletter



\theoremstyle{plain}
\newtheorem{thm}{\protect\theoremname}
\theoremstyle{plain}
\newtheorem{conjecture}[thm]{\protect\conjecturename}
\theoremstyle{plain}

\theoremstyle{remark}

\theoremstyle{plain}

\theoremstyle{plain}
\newtheorem{prop}[thm]{\protect\propositionname}
\theoremstyle{remark}

\theoremstyle{remark}
\newtheorem{rem}[thm]{\protect\remarkname}
\theoremstyle{definition}
\newtheorem{defn}[thm]{\protect\definitionname}
\theoremstyle{definition}
\newtheorem{example}[thm]{\protect\examplename}
\theoremstyle{plain}
\newtheorem{cor}[thm]{\protect\corollaryname}
\theoremstyle{plain}

\numberwithin{thm}{section}

\usepackage{ifpdf}
\ifpdf

\IfFileExists{lmodern.sty}
{\usepackage{lmodern}}{}

\fi 

\usepackage{multicol}


\newenvironment{nouppercase}{%
	\renewcommand{\uppercasenonmath}[1]{}}{}

\makeatother

\providecommand{\claimname}{\inputencoding{latin9}Claim}
\providecommand{\conjecturename}{\inputencoding{latin9}Conjecture}
\providecommand{\corollaryname}{\inputencoding{latin9}Corollary}
\providecommand{\definitionname}{\inputencoding{latin9}Definition}
\providecommand{\examplename}{\inputencoding{latin9}Example}
\providecommand{\lemmaname}{\inputencoding{latin9}Lemma}
\providecommand{\notename}{\inputencoding{latin9}Note}
\providecommand{\propositionname}{\inputencoding{latin9}Proposition}
\providecommand{\questionname}{\inputencoding{latin9}Question}
\providecommand{\remarkname}{\inputencoding{latin9}Remark}
\providecommand{\theoremname}{\inputencoding{latin9}Theorem}
\providecommand{\problemname}{\inputencoding{latin9}Problem}

\newcommand\twoheaduparrow{\mathrel{\rotatebox{90}{$\twoheaduparrow$}}}
\newcommand\twoheaddownarrow{\mathrel{\rotatebox{270}{$\twoheaddownarrow$}}}

\begin{document}
	\author{N\MakeLowercase{ick} E\MakeLowercase{arly}}
	\thanks{Max Planck Institute for Mathematics in the Sciences.  Email: \href{nick.early@mis.mpg.de}{nick.early@mis.mpg.de}}
		\title[Factorization for Generalized Biadjoint Scalar Amplitudes via Matroid Subdivisions]{Factorization for Generalized Biadjoint Scalar Amplitudes via Matroid Subdivisions}
	\begin{nouppercase}
		\maketitle
	\end{nouppercase}
	\begin{abstract}		
		We study the problem of factorization for residues of generalized biadjoint scalar scattering amplitudes $m^{(k)}_n$, introduced by Cachazo, Early, Guevara and Mizera (CEGM), involving multi-dimensional residues which factorize generically into $k$-ary products of lower-point generalized biadjoint amplitudes of the same type $m^{(k)}_{n_1}\cdots m^{(k)}_{n_k}$, where $n_1+\cdots +n_k = n+k(k-1)$, noting that smaller numbers of factors arise as special cases.  Such behavior is governed geometrically by regular matroid subdivisions of hypersimplices and cones in the positive tropical Grassmannian, and combinatorially by collections of compatible decorated ordered set partitions, considered modulo cyclic rotation.  We make a proposal for conditions under which this happens and we develop $k=3,4$ in detail.  We conclude briefly to propose a novel formula to construct coarsest regular matroid subdivisions of all hypersimplices $\Delta_{k,n}$ and rays of the positive tropical Grassmannian, which should be of independent interest.

	\end{abstract}

	\begingroup
	\let\cleardoublepage\relax
	\let\clearpage\relax
	\tableofcontents
	\endgroup

\section{Introduction}

	Recently, building on the work of Cachazo, He and Yuan \cite{CHY2014,CHY2014C}, Cachazo, Early, Guevara and Mizera (CEGM), discovered in \cite{CEGM2019} a recursive family of inter-related generalized biadjoint scattering amplitudes $m^{(k)}_n$.  They fall into a symmetric triangular hierarchy indexed by pairs $(k,n)$ of integers satisfying $2\le k\le n-2$,
	$$\begin{array}{ccccccc}
		&  &  & m^{(2)}_4 &  &  &  \\
		&  & m^{(2)}_5 &  & m^{(3)}_5 &  &  \\
		& m^{(2)}_6 &  & m^{(3)}_6 &  & m^{(4)}_6 &  \\
		m^{(2)}_7 &  & m^{(3)}_7 &  & m^{(4)}_7 &  & m^{(5)}_7 \\
		&  &  & \vdots &  &  & 
	\end{array}$$
	There are numerous upward-looking relations in the hierarchy; and there is the standard (horizontal) duality $m^{(k)}_n(\mathfrak{s}_J) = m^{(n-k)}_{n}(\mathfrak{s}_{J^c})$; but also by taking residues, the entries in the triangle are directly related to the biadjoint scalars $m^{(2)}_n$ on the boundary \cite{CE2022}.  Moreover $m^{(k)}_n$ is seen to act directly as a kind of interpolation between $n$ copies of $m^{(k-1)}_{n-1}$ and $n$ copies of $m^{(k)}_{n-1}$, one for each facet of the hypersimplex $\Delta_{k,n} = \left\{x\in \lbrack 0,1\rbrack^n:\sum_{j=1}^n x_j=k \right\}$.  Physically speaking, these $2n$ ``boundaries'' are realized as certain hard and soft limits \cite{CUZ2019,SG2019}, see also \cite{LikelihoodDegenerations}.  There is a generalized notion of Feynman diagrams \cite{BC2019} for the theory, given by \textit{collections} of metric trees \cite{HJJS}, and in terms of matroid subdivisions and the positive tropical Grassmannian $\text{Trop}^+G(k,n)$ via matroidal arrangements of cyclically skewed, affinely translated tropical hyperplanes which we call blades, in \cite{Early19WeakSeparationMatroidSubdivision} and \cite{Early2019PlanarBasis}.

We ask whether there is a systematic construction of residues of $m^{(k)}_n$ which factorize in a meaningful way according to some notion of factorization channels.  Factorization in the usual sense means, roughly speaking, according to a 2-block set partition of the labels $\{1,\ldots, n\}$.  This seems to be a quite difficult problem due to the intrinsic complexity of the tropical Grassmannian; nonetheless the simplicity and the explicit nature of our all $(k,n)$ construction shows that the problem might just be tractable after all.  So our very first question is just to ask, are there general residues of the form 
\begin{eqnarray}\label{eq: factorization kn}
	\text{Res}_{\bullet=0}\left(m^{(k)}_n\right) & = & \prod_{j=1}^k m^{(k)}_{n_j}?
\end{eqnarray}
	\begin{figure}[h!]
	\centering
	\includegraphics[width=.85\linewidth]{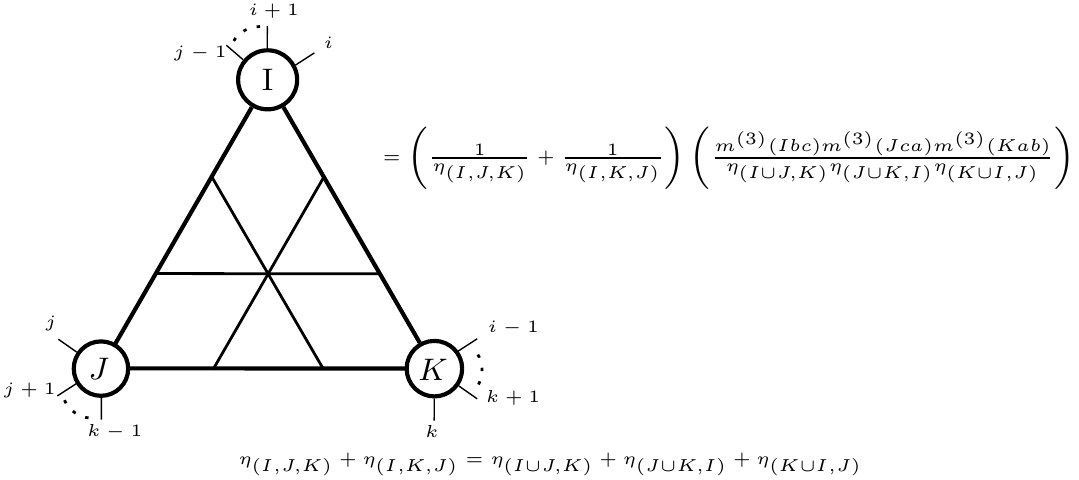}
	\caption{3-ary factorizations of CEGM amplitudes $m^{(3)}_n$.  Projection of a subdivision of the hypersimplex, via $\Delta_{3,n}$: $x\mapsto \left(\sum_{i\in I}x_i,\sum_{j\in J}x_j,\sum_{k\in K}x_k\right)$.}
	\label{fig:3n channels Intro}
\end{figure}
And if so, where are they?  Could there be some combinatorial representation like Feynman diagrams, where such factorization channels are manifest?

In Figure \ref{fig:3n channels Intro} the five linear functions of the form $\eta_{(\bullet)}$ explicitly in the denominator are determined by the CEGM scattering equations formula and can be calculated explicitly using a purely combinatorial formula for regular matroid subdivisions, given in Definition \ref{defn: kinematic blade A}.

Our solution to these problems, formulated for all $(k,n)$ and detailed in the cases $k=3,4$, finds that factorization of $m^{(k)}_n$ occurs after taking not one residue, but generically after $(k-1)^2$ of them.  Including degenerate cases then one has to take a $d(k-1)$-dimensional residue where $2\le d\le k$.  We remark that the analog of our formula in the case $k=2$ is the standard tree-level relation  (for the biadjoint scalar $m^{(2)}_n$, in particular)
$$\text{Res}_{P^2=0}(m^{(2)}_n(S\cup S^c)) = m^{(2)}(S,i)\cdot m^{(2)}(S^c,i),$$
where $P = \sum_{s\in S}p_s$ is a sum of momenta indexed by the set $S$.  Intriguingly, the $(k\ge 3)$-ary residues which we find are not in general governed by the usual poset of ``2-split'' factorizations as one might naively expect.  Let us point out that an interesting approach to combinatorial factorization was initiated in \cite{Cachazo2017} in the context of certain non-planar MHV leading singularities.

This paper completes the question asked in work of Cachazo, Early and Umbert \cite{CEU2021}, whether smooth 3-splits of $m^{(2)}_n$ provide a ``shadow'' of factorization for CEGM amplitudes.  We show here this intuition is the right one. 

This paper has two main sections and is structured as follows.  

In Section \ref{sec: factorization proposal} we formulate our main proposal to construct residues of $m^{(k)}_n$ which generically factorize into $k$ parts; the cases $k=3,4$ are studied in depth.  While for $k=3$ our proposal calls for a unique generic\footnote{Here generic means that one has a $k$-block set partition where all blocks have size at least one.} combinatorial type of residue, it is surprising that for $k=4$ we find two combinatorial types!  These two types are realized by two 3-dimensional polytopes in Figures \ref{fig:polypositroid4splita I} and \ref{fig:polypositroid4splitb II}; these graphs are dual to two positroidal subdivisions of the hypersimplex $\Delta_{4,n}$ which we construct explicitly.

In Section \ref{sec: wrap up}, we present a formula which has been very useful in studying coarsest positroidal subdivisions of hypersimplices $\Delta_{k,n}$ for small $k = 3,4$ especially.  However, we conjecture that works in general and will lead to new insights into simple poles of generalized biadjoint amplitudes $m^{(k)}_n$ and we hope that it will find applications well beyond.
\begin{figure}[h!]
	\centering
	\includegraphics[width=0.3\linewidth]{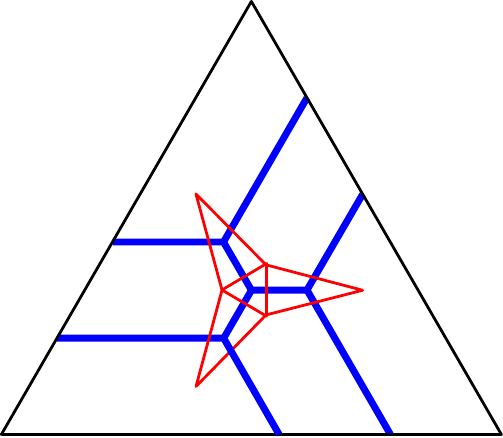}
	\caption{Matroidal weighted blade arrangement (thick, blue lines) and its dual (thin, red lines).  This is a section of a certain coarsest positroidal subdivision of $\Delta_{3,12}$; it has six maximal cells and is induced by the positive tropical Pl\"{u}cker vector $-\mathfrak{h}_{1,5,9}+\mathfrak{h}_{1,5,10}+\mathfrak{h}_{1,6,9}+\mathfrak{h}_{2,5,9}$.  Then $-\eta_{1,5,9} + \eta_{1,5,10} + \eta_{1,6,9} + \eta_{2,5,9} = 0$, where $\eta_{a,b,c}$ is an element of the planar basis of kinematic blades \cite{Early2019PlanarBasis}, is a simple pole of $m^{(3)}_{12}$. }
	\label{fig:39sixchambersubdivision Intro}
\end{figure}

\section{Factorization Proposal}\label{sec: factorization proposal}
We first establish some general definitions and conventions.

Denote by $\binom{\lbrack n\rbrack}{k}$ the set of all $k$-element subsets of $\lbrack n\rbrack = \{1,\ldots, n\}$, and by $\binom{\lbrack n\rbrack}{k}^{nf}$ the nonfrozen $k$-element subsets, excluding cyclic intervals of the form $\{j,j+1,\ldots, j+k-1\}$. 

With $p_J(g)$ the Pl\"{u}cker coordinate on $G(k,n)$ indexed by the $k$-element column set $J \subset \{1,\ldots, n\}$, denote by $X(k,n)$ the configuration space 
$$X(k,n) = \left\{g\in G(k,n): \prod_J p_J(g)\not=0\right\}\slash (\mathbb{C}^\ast)^n.$$
Denote by $\mathbf{N}_{k,n}$ the Newton polytope 
$$\mathbf{N}_{k,n} = \text{Newt}\left(\prod_J p_J\right)$$
evaluated on the matrix $M$; note that this provides a positive parametrization of $X(k,n)$, see Appendix \ref{sec:positive parametrization}. 
\begin{defn}
	The \textit{lineality subspace} $\text{Lin}_{k,n}$ is the image of the embedding $\mathbb{R}^n \hookrightarrow\mathbb{R}^{\binom{n}{k}}$, with coordinates
	$$(\pi)_J = \sum_{j\in J} x_j.$$
	That is,
	$$\text{Lin}_{k,n} = \text{span}\left\{\sum_{J \ni j} e^{J},\ j=1,\ldots, n\right\}.$$
\end{defn}
\begin{defn}[\cite{SpeyerWilliams2003}]
	Let $\pi \in \mathbb{R}^{\binom{n}{k}}$.  The point $\pi$ is positive tropical Pl\"{u}cker vector if, for each $(L,\{a,b,c,d\})$ with $L\cap\{a,b,c,d\} = \emptyset$, and $a<b<c<d$ up to cyclic rotation, we have
	$$\pi_{Lac} + \pi_{Lbd} = \min\{\pi_{Lab} + \pi_{Lcd},\pi_{Lad} + \pi_{Lbc}\}.$$
	The positive tropical Grassmannian\footnote{This definition relies on the following result: the positive Dressian is equal to the positive tropical Grassmannian \cite{arkani2020positive,SpeyerWilliams2020}.}  $\text{Trop}^+G(k,n) \subset \mathbb{R}^{\binom{n}{k}}\slash \text{Lin}_{k,n}$ is the set of all positive tropical Pl\"{u}cker vectors, modulo lineality.
\end{defn}

We now move to more specific constructions, starting with the planar basis of kinematic invariants $\eta_J(\mathfrak{s})$, introduced in \cite{Early2019PlanarBasis}.  The kinematic space $\mathcal{K}(k,n)$ is the codimension $n$ subspace of $\mathcal{K}(k,n)$ defined by 
$$\mathcal{K}(k,n) = \left\{(\mathfrak{s}) \in \mathbb{R}^{\binom{n}{k}}: \sum_{J\ni n} \mathfrak{s}_J = 0\right\}.$$
Following \cite{Early2019PlanarBasis}, for any $J\in \binom{\lbrack n\rbrack}{k}^{nf}$, define a linear function on the kinematic space $\eta_J:\mathcal{K}(k,n) \rightarrow \mathbb{R}$, by
\begin{eqnarray}\label{eq:planar basis element}
	\eta_J(\mathfrak{s}) & = & -\frac{1}{n}\sum_{I\in \binom{\lbrack n\rbrack}{k}}\min\{L_1(e_I-e_J),\ldots, L_n(e_I-e_J)\}\mathfrak{s}_I,
\end{eqnarray}
where 
$$L_j(x) = x_{j+1}+2x_{j+2} + \cdots +(n-1)x_{j-1},$$
for $j=1,\ldots, n$, are linear functions on $\mathbb{R}^n$.

Further define an element of $\mathbb{R}^{\binom{n}{k}}$, a height function $\mathfrak{h}_J$, by 
$$\mathfrak{h}_J = -\frac{1}{n}\sum_{I\in \binom{\lbrack n\rbrack}{k}}\min\{L_1(e_I-e_J),\ldots, L_n(e_I-e_J)\}e^I.$$

\begin{defn}
	A decorated ordered set partition $$((S_1)_{r_1},\ldots, (S_\ell)_{r_\ell})$$ of $(\{1,\ldots, n\},k)$ is an ordered set partition $(S_1,\ldots, S_\ell)$ of $\{1,\ldots, n\}$ together with an ordered partition $(r_1,\ldots, r_\ell)$ with $\sum_{j=1}^\ell r_j=k$.  It is said to be of type $\Delta_{k,n}$ if we have additionally $1\le r_j\le\vert S_j\vert-1 $, for each $j=1,\ldots, \ell$.  In this case we write $((S_1)_{r_1},\ldots, (S_\ell)_{r_\ell}) \in \text{OSP}(\Delta_{k,n})$.
\end{defn}
		
Given a decorated ordered set partition $(\mathbf{S},\mathbf{r})$, define 
\begin{eqnarray}\label{eq: planar basis dOSP A}
	M_{(\mathbf{S},\mathbf{r})_j}(x) & = & r_{j+1}x_{S_{j+1}} + (r_{j+1} + r_{j+2})x_{S_{j+1}\cup S_{j+2}} + \cdots (r_{j+1} + \cdots + r_{j-1})x_{S_{j+1} \cup \cdots \cup S_{j-1}}
\end{eqnarray}
for $j=1,\ldots, d$, where index addition is cyclic modulo $d$.

The following piecewise-linear function does the heavy-lifting in our story:
$$\rho_{(\mathbf{S},\mathbf{r})}(x) = \min\{M_{(\mathbf{S},\mathbf{r})_1}(x),\ldots,M_{(\mathbf{S},\mathbf{r})_d}(x)\}.$$
Here we point out that $\rho_{(\mathbf{S},\mathbf{r})}$ is manifestly invariant under cyclic block rotation.
\begin{defn}\label{defn: kinematic blade A}
	For any decorated ordered set partition 
	$$(\mathbf{S},\mathbf{r}) = ((S_1)_{r_1},\ldots, (S_d)_{r_d})$$
	of type $\Delta_{k,n}$, define a linear function on the kinematic space, the \textit{kinematic blade}
	$$\eta_{(\mathbf{S},\mathbf{r})}(\mathfrak{s}) = -\frac{1}{d}\sum_{J \in \binom{\lbrack n\rbrack}{k}} \rho_{(\mathbf{S},\mathbf{r})}(e_J)\mathfrak{s}_J$$
	and the height function
	$$\mathfrak{h}_{(\mathbf{S},\mathbf{r})}(\mathfrak{s}) = -\frac{1}{d}\sum_{J \in \binom{\lbrack n\rbrack}{k}} \rho_{(\mathbf{S},\mathbf{r})}(e_J)e^J.$$
\end{defn}

\subsection{Case $k=3$}

We represent in Figure \ref{fig:k3higherpropa A} the factorization cone $\mathcal{C}^{(3)}$ which, we conjecture, gives rise to a generic factorization of $m^{(3)}_n$ into three parts.  We use matroidal blade arrangements \cite{Early19WeakSeparationMatroidSubdivision}.  We note that this representation of the positive tropical Grassmannian is novel and different from other interpretations; instead of using tropical linear spaces or heights over the vertices of the hypersimplex to project regions of linearity onto the maximal cells, we now use weights attached to certain internal faces of the subdivision.  
Here $\mathcal{C}^{(3)}$ is identified abstractly with 
\begin{eqnarray*}
	\mathcal{C}^{(3)} & = & \left\{(c_{i,j}) \in (\mathbb{R}_{\ge 0})^6: (\ast)\right\},
\end{eqnarray*}
where $(\ast)$ consists of the three balancing equations (compare with \cite{EarlyBlades} and \cite[Section 2.5]{BC2019})
\begin{eqnarray}\label{eq: balancing equations A}
	c_{1,2} + c_{1,3} & = & c_{2,1} + c_{3,1}\nonumber\\
	c_{2,3} + c_{2,1} & = & c_{3,2} + c_{1,2}\\
	c_{3,1} + c_{3,2} & = & c_{1,3} + c_{2,3}\nonumber.
\end{eqnarray}
See Figure \ref{fig:k3higherpropa A}.
\begin{figure}[!h]
	\centering
	\includegraphics[width=1\linewidth]{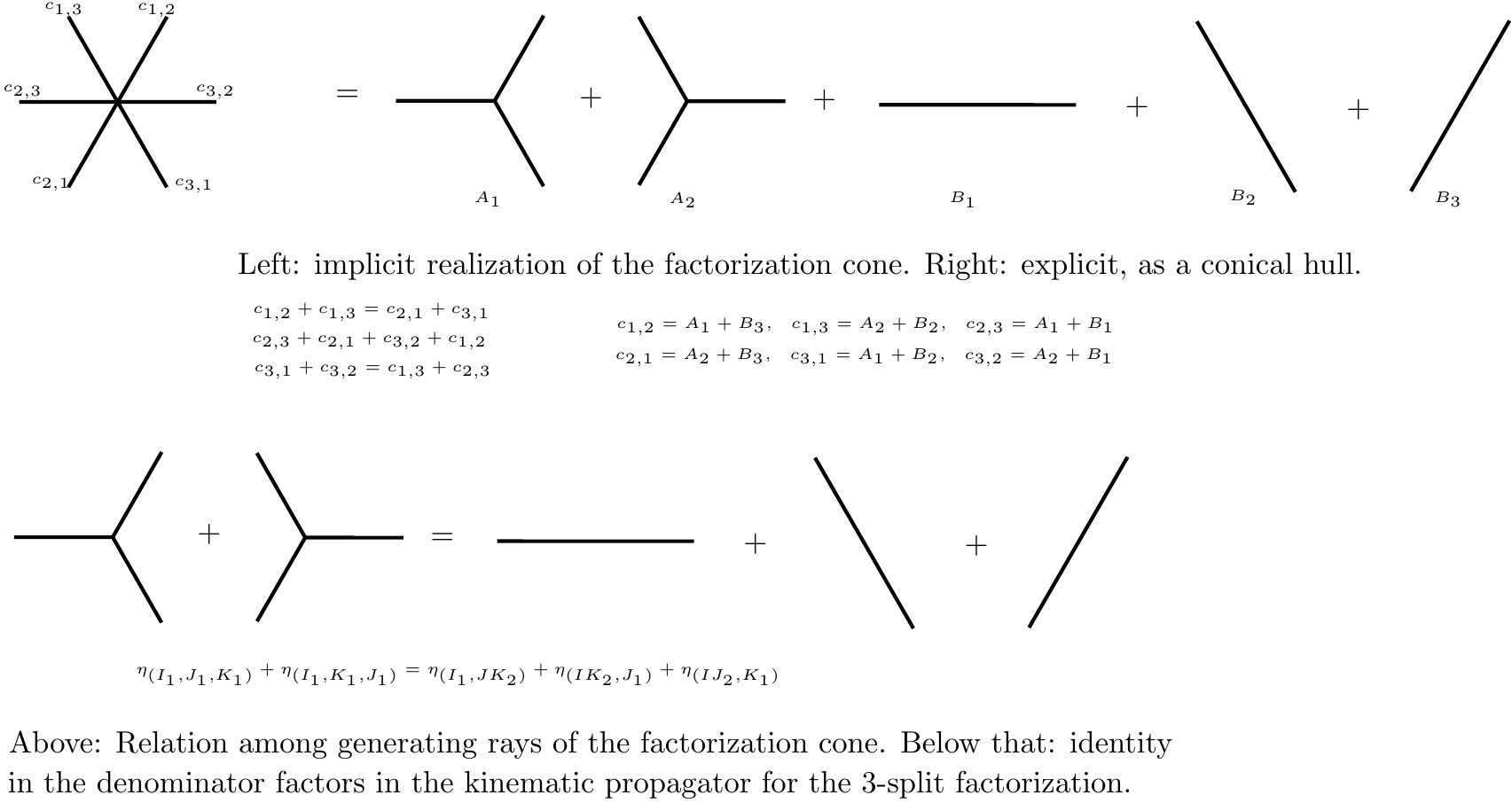}
	\caption{Blade arrangement interpretation of the generating rays of the factorization cone $\mathcal{C}^{(3)}$; this can be embedded into the positive tropical Grassmannian $\text{Trop}^+G(3,n)$.  Parameters $u_1,\ldots, u_6,A_i,B_j$ are nonnegative numbers; the $u_j$ satisfy the indicated balancing equations.}
	\label{fig:k3higherpropa A}
\end{figure}
For example, to be completely explicit, one of the bipyramidal cones in $\text{Trop}^+G(3,6)$ can be represented of all formal linear combinations of matroid polytopes
\begin{eqnarray}\label{eq: formal linear combination blades Minkowski sum}
	& & c_{1,2}\lbrack 56_1\rbrack \boxplus \lbrack  12_1 34_1\rbrack + c_{2,1}\lbrack 56_1\rbrack \boxplus \lbrack  34_1 12_1\rbrack +  c_{2,3}\lbrack 12_1\rbrack \boxplus \lbrack  34_1 56_1\rbrack + c_{3,2}\lbrack 12_1\rbrack \boxplus \lbrack  56_1 34_1\rbrack\\
	& + &  c_{3,1}\lbrack 34_1\rbrack \boxplus \lbrack  56_1 12_1\rbrack + c_{3,1}\lbrack 34_1\rbrack \boxplus \lbrack  12_1 56_1\rbrack\nonumber
\end{eqnarray}
such that the balancing conditions of Equation \eqref{eq: balancing equations A} hold with all $c_{i,j}\ge 0$.  Here we note the defining equations, for example
$$((12_1 34_1 56_1)) = \lbrack 12_1\rbrack \boxplus \lbrack 34_1 56_1\rbrack + \lbrack 34_1\rbrack \boxplus \lbrack 56_1 12_1\rbrack+\lbrack 56_1\rbrack \boxplus \lbrack  12_1 34_1\rbrack$$
and
$$((12_1 3456_2)) = \lbrack 12_1 \rbrack \boxplus \lbrack 3456_2 \rbrack,$$
where these satisfy the one relation 
$$((12_1 34_1 56_1)) + ((12_1 56_1 34_1)) = ((12_1 3456_2)) + ((34_1 5612_2))+((56_1 1234_2)).$$
Here the symbol $\boxplus$ denotes the Minkowski sum; in this way each term in Equation \eqref{eq: formal linear combination blades Minkowski sum} is a Minkowski sum of a line segment and a half octahedron.

We now compute some f-vectors; first, the factorization cone $\mathcal{C}^{(3)}$ above has f-vector $(5,9,6,1)$.  When $n=6$ this is one of the bipyramidal cones in $\text{Trop}^+G(3,6)$.

For sake of comparison, omitting details, we report the results of additional calculations of f-vectors for certain new $(4,8)$ and $(5,10)$ factorization cones, which can be defined analogously to $\mathcal{C}^{(3)}$ using the data from Section \ref{sec: all kn proposal}.  For $n=4$ and $n=5$ we find 18 (given in Equation \eqref{eq: 18 propagators 48 A}) and 63 rays, respectively:
$$(18, 108, 308, 485, 450, 250, 81, 14, 1)$$
and
$$( 63, 895, 6010, 23965, 63191, 116936, 157285, 156950, 117405, 65985, 27704, 8555, 1885, 280, 25, 1).$$
To obtain these f-vectors, we took an arbitrary linear combination of 9 (respectively 16) blades and then took two (respectively three) boundaries.  Then we defined the cone in $\mathbb{R}^{9}$ (respectively $\mathbb{R}^{16}$) using a system of inequalities that arises by requiring all coefficients in the respective boundaries be nonnegative, which is the condition under which weighted blade arrangements are positroidal.  The f-vectors were calculated using SageMath.

Turning now to generalized biadjoint scalar amplitudes, we conjecture an explicit formula for 3-ary factorized residues, when $k=3$.
\begin{conjecture}\label{conjecture bipyramidal factorization A}
	Fix an ordered set partition $(I,J,K)$ of $\{1,\ldots, n\}$, where
	$$I = \{i,i+1,\ldots, j-1\},\ J = \{j,j+1,\ldots, k-1\},\ K = \{k,k+1,\ldots, i-1\}.$$
	Then to leading order in $\eta_{(I_1,JK_2)},\eta_{(IJ_2,K_1)},\eta_{(KI_2,J_1)}$ we have
	\begin{eqnarray}\label{eq: residue 3n splitting A}
		m^{(3)}_n & = & \left(\frac{1}{\eta_{(I_1,J_1,K_1)}}+\frac{1}{\eta_{(I_1,J_1,K_1)}}\right)\cdot \left(\frac{m^{(3)}(Ibc)m^{(3)}(Jac)m^{(3)}(Kab)}{\eta_{(IJ_2,K_1)}\eta_{(KI_2,J_1)} \eta_{(I_1,JK_2)}}\right),
	\end{eqnarray}
	where there is one linear relation among the five kinematic blades which appear explicitly, namely
	$$\eta_{(I_1,J_1,K_1)}+\eta_{(I_1,K_1,J_1)} =\eta_{(I_1,JK_2)}+\eta_{(IJ_2,K_1)}+\eta_{(KI_2,J_1)}. $$
\end{conjecture}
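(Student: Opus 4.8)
The plan is to reduce the claimed identity to a factorization of the CEGM integral formula on the configuration space $X(3,n)$ along the boundary stratum cut out by the three kinematic blades $\eta_{(IJ_2,K_1)}$, $\eta_{(KI_2,J_1)}$, $\eta_{(I_1,JK_2)}$. Recall that $m^{(3)}_n$ is a sum over solutions of the CEGM scattering equations of a ratio $(\mathrm{PT}^{(3)}_n)^2/\det{}'\Phi$, with $\mathrm{PT}^{(3)}_n$ the planar integrand and $\det{}'\Phi$ the reduced Jacobian. First I would show that the simultaneous vanishing of those three blades defines a codimension-three locus along which the pushforward of the scattering-equations measure develops the singular behavior displayed in \eqref{eq: residue 3n splitting A}, and that only the solutions flowing to one specific boundary component of $X(3,n)$ contribute to this leading singular term. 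That component should be the one dual to the matroidal blade arrangement of Figure \ref{fig:k3higherpropa A}: the projected hypersimplex $\Delta_{3,n}$ acquires the regular subdivision whose maximal cells are the matroid polytopes of the decorated ordered set partitions $(I_1,J_1,K_1)$, $(I_1,K_1,J_1)$, $(IJ_2,K_1)$, $(KI_2,J_1)$, $(I_1,JK_2)$; concretely, one must identify inside $\text{Trop}^+G(3,n)$ the star of the corresponding codimension-three cone with a join of three lower-dimensional stars and the bipyramidal cone $\mathcal{C}^{(3)}$.

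On that stratum the configuration degenerates so that $X(3,n)$ factors, up to the residual internal moduli, as $X(3,n_I)\times X(3,n_J)\times X(3,n_K)$ with $n_I=|I|+2$, $n_J=|J|+2$, $n_K=|K|+2$, hence $n_I+n_J+n_K=n+6=n+k(k-1)$ for $k=3$; the two extra marked points on each factor are the relabelings $a,b,c$ that appear in $m^{(3)}(Ibc)$, $m^{(3)}(Jac)$, $m^{(3)}(Kab)$ and encode the gluing data. The next step is to verify that the three ingredients of the integral factor accordingly: $(i)$ each of the two planar Parke--Taylor factors splits as $\mathrm{PT}^{(3)}_{n_I}\,\mathrm{PT}^{(3)}_{n_J}\,\mathrm{PT}^{(3)}_{n_K}$ times a central factor, a purely combinatorial identity for the planar basis; $(ii)$ the reduced Jacobian $\det{}'\Phi$ factors into the three reduced Jacobians of the pieces times a central Jacobian; and $(iii)$ the central data reassemble into the prefactor $\tfrac{1}{\eta_{(I_1,J_1,K_1)}}+\tfrac{1}{\eta_{(I_1,K_1,J_1)}}$, which is itself the $m^{(2)}_4$-type amplitude of the internal triangle --- the five generating rays of $\mathcal{C}^{(3)}$ are precisely the five blades occurring in \eqref{eq: residue 3n splitting A}, three ``equatorial'' ones that are residued and two ``apical'' ones that survive in the prefactor, whose two terms match the two top-dimensional pieces of the triangular bipyramid. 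Assembling these, the leading term of $m^{(3)}_n$ in the expansion about the simultaneous zero of the three equatorial blades is the right-hand side of \eqref{eq: residue 3n splitting A}.

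The linear relation among the five blades is the most routine ingredient: it follows directly from Definition \ref{defn: kinematic blade A} together with additivity of the piecewise-linear functions $\rho_{(\mathbf{S},\mathbf{r})}$ across the regular subdivision, i.e. it is the $n$-point avatar of the Minkowski-sum identity recorded in the text, $((12_1 34_1 56_1))+((12_1 56_1 34_1))=((12_1 3456_2))+((34_1 5612_2))+((56_1 1234_2))$, with $12,34,56$ replaced by $I,J,K$. Concretely, one checks $\rho_{(I_1,J_1,K_1)}(e_L)+\rho_{(I_1,K_1,J_1)}(e_L)=\rho_{(I_1,JK_2)}(e_L)+\rho_{(IJ_2,K_1)}(e_L)+\rho_{(KI_2,J_1)}(e_L)$ for every $L\in\binom{[n]}{3}$ by comparing the defining minima block by block, then multiplies by $\mathfrak{s}_L$ and sums. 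As a sanity check and a base case one verifies the full identity directly at $n=6$, where $m^{(3)}_6$ is known in closed form and $\mathcal{C}^{(3)}$ is literally one of the bipyramidal cones of $\text{Trop}^+G(3,6)$; an induction on $n$ propagating the statement through the upward-looking relations of the $(k,n)$ hierarchy is a plausible route to the general case.

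The main obstacle is the localization step. For $k=2$ the factorization of $m^{(2)}_n$ rests on the familiar degeneration of the $n$-punctured sphere into two spheres joined at a node, but for $k\ge 3$ there is no such elementary picture: the relevant worldsheet is the higher-dimensional $X(3,n)$, and its boundary behavior is governed by matroid subdivisions and $\text{Trop}^+G(3,n)$, which is genuinely intricate. One has to show that, among all solutions of the scattering equations, precisely those approaching the stratum dual to the blade arrangement of Figure \ref{fig:k3higherpropa A} contribute to the leading singular term, with the correct multiplicity, and that all subleading contributions --- the content of the ``to leading order'' qualifier --- are genuinely suppressed. Controlling this, rather than the subsequent bookkeeping of the factorized integrand, is where the real difficulty lies, and is why the statement is presently offered as a conjecture.
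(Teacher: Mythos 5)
The statement you are proving is presented in the paper as a conjecture: the author offers no proof, only direct computational verification for $m^{(3)}_6$, $m^{(3)}_7$, $m^{(3)}_8$ (the Example following the conjecture, where the explicit amplitudes are computed and residues taken), together with the combinatorial identification of the five blades with the rays of the bipyramidal cone $\mathcal{C}^{(3)}$. Your write-up correctly reproduces the parts that are genuinely available: the linear relation among the five kinematic blades does follow from Definition \ref{defn: kinematic blade A} by the block-by-block comparison of the piecewise-linear functions $\rho$ that you describe (it is the $n$-point version of the displayed Minkowski-sum relation), and the $n=6$ base case is exactly what the paper checks. You also silently repair the typo in Equation \eqref{eq: residue 3n splitting A}, where the prefactor should read $\frac{1}{\eta_{(I_1,J_1,K_1)}}+\frac{1}{\eta_{(I_1,K_1,J_1)}}$ rather than the same term twice; that correction is right and consistent with the stated linear relation and with the examples.

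However, the proposal is a research program rather than a proof, and you say so yourself. The three load-bearing steps --- (i) that the $(k-1)^2=4$-dimensional residue localizes exactly onto the solutions of the scattering equations degenerating to the stratum dual to the blade arrangement of Figure \ref{fig:k3higherpropa A}, with correct multiplicity and with all subleading terms suppressed; (ii) that $\det'\Phi^{(3)}$ and the Parke--Taylor factors split into the three lower-point pieces times central data; and (iii) that the central data reassemble into the two-term prefactor --- are each asserted, not established, and for $k\ge 3$ there is no analogue of the nodal-degeneration argument that makes (i)--(iii) routine for $k=2$. The proposed induction on $n$ via the ``upward-looking relations'' is likewise only gestured at: it is not explained how a $4$-dimensional iterated residue would commute with the soft/hard limits relating $m^{(3)}_n$ to $m^{(3)}_{n-1}$ and $m^{(2)}_{n-1}$, nor how the induction would terminate on all three factors simultaneously. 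So the gap is concrete: nothing in the proposal upgrades the conjecture to a theorem, and the portion that is actually proved (the blade relation and the $n=6$ check) coincides with what the paper already records.
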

	\begin{figure}[h!]
	\centering
	\includegraphics[width=.9\linewidth]{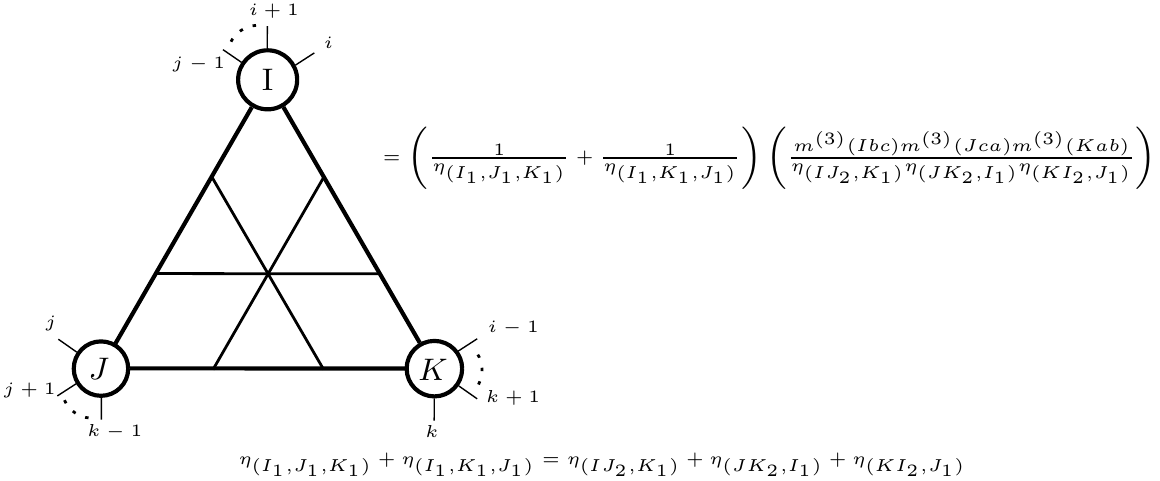}
	\caption{3-ary factorizations of CEGM amplitudes $m^{(3)}_n$.  Projection of a subdivision of the hypersimplex, via $\Delta_{3,n}$: $x\mapsto \left(\sum_{i\in I}x_i,\sum_{j\in J}x_j,\sum_{k\in K}x_k\right)$.}
\end{figure}
\begin{example}
	Let us give the complete formulation of the prefactors in Conjecture \ref{conjecture bipyramidal factorization A} when $n = 6,7,8$.
	
	Modulo cyclic permutation there a small enough number of cases that the computation can be performed in full by calculating the explicit formulas for the CEGM amplitudes and then taking residues; we list representatives of the prefactors modulo cyclic relabeling: for $m^{(3)}_6$ there is a single case,
	$$\left(\frac{1}{\eta_{(12_1 34_1 56_1)}} + \frac{1}{\eta_{(12_1 56_1 34_1)}}\right).$$
	For $m^{(3)}_7$ there is again a single case,
	$$\left(\frac{1}{\eta_{(12_1 34_1 567_1)}} + \frac{1}{\eta_{(12_1 567_1 34_1)}}\right),$$
	while for $m^{(3)}_8$ there are two cases,
	$$\left(\frac{1}{\eta_{(12_1 34_1 5678_1)}} + \frac{1}{\eta_{(12_1 5678_1 34_1)}}\right)\text{ and } \left(\frac{1}{\eta_{(12_1 345_1 678_1)}} + \frac{1}{\eta_{(12_1 678_1 345_1)}}\right).$$
	In the bottom line, comparing to Equation \eqref{eq: residue 3n splitting A} we identify respectively $m^{(3)}_6\cdot m^{(3)}_4\cdot m^{(3)}_4$, while in the second case the prefactor multiplies $m^{(3)}_5\cdot m^{(3)}_5\cdot m^{(3)}_4$, where we remind that $m^{(3)}_4=1$. 
\end{example}
\subsection{More with $k=3$: A Cubical Fibration}

Let $J = \{j_1,j_2,j_3\} \in \binom{\lbrack n\rbrack}{3}^{nf}$ be totally nonfrozen, that is with no two indices cyclically adjacent.  Then, for each row in the table below, the four $\gamma$'s are indexed by a weakly separated collection and are consequently simultaneously minimized on a face of the Newton polytope $\mathbf{N}_{3,n}$.  These eight faces are given by simultaneously minimizing on $\mathbf{N}_{3,n}$ the generalized positive roots $\gamma_{j_1,j_2,j_3}$ in each of the eight rows, respectively
\begin{eqnarray}\label{eq: eight three splits}
	\begin{array}{cccc}
		\gamma _{j_1,j_2,j_3} & \gamma _{j_1,j_1+1,j_3} & \gamma _{j_1,j_2,j_2+1} & \gamma _{j_2,j_3,j_3+1} \\
		\gamma _{j_1,j_2,j_3} & \gamma _{j_1,j_2,j_1-1} & \gamma _{j_1,j_2,j_2+1} & \gamma _{j_2,j_3,j_3+1} \\
		\gamma _{j_1,j_2,j_3} & \gamma _{j_1,j_3-1,j_3} & \gamma _{j_1,j_1+1,j_3} & \gamma _{j_1,j_2,j_2+1} \\
		\gamma _{j_1,j_2,j_3} & \gamma _{j_1,j_3-1,j_3} & \gamma _{j_1,j_2,j_1-1} & \gamma _{j_1,j_2,j_2+1} \\
		\gamma _{j_1,j_2,j_3} & \gamma _{j_2-1,j_2,j_3} & \gamma _{j_1,j_1+1,j_3} & \gamma _{j_2,j_3,j_3+1} \\
		\gamma _{j_1,j_2,j_3} & \gamma _{j_2-1,j_2,j_3} & \gamma _{j_1,j_2,j_1-1} & \gamma _{j_2,j_3,j_3+1} \\
		\gamma _{j_1,j_2,j_3} & \gamma _{j_2-1,j_2,j_3} & \gamma _{j_1,j_3-1,j_3} & \gamma _{j_1,j_1+1,j_3} \\
		\gamma _{j_1,j_2,j_3} & \gamma _{j_2-1,j_2,j_3} & \gamma _{j_1,j_3-1,j_3} & \gamma _{j_1,j_2,j_1-1}. \\
	\end{array}
\end{eqnarray}
Here with $k=3$, the generalized positive root $\gamma_{j_1,j_2,j_3}$ is a particular linear function on $\mathbb{R}^{(k-1)\times (n-k)}$, defined by 
$$\gamma_{j_1,j_2,j_3} = \sum_{t=j_1}^{j_2-2}\alpha_{1,t} + \sum_{t=j_2-1}^{j_3-3}\alpha_{2,t}.$$
In general, generalized positive roots are defined by the equation 
$$\gamma_J(\alpha) = \sum_{i=1}^{k-1} \alpha_{i,\lbrack j_i-(i-1),j_{i+1}-i-1\rbrack},$$
for any subset $J \in \binom{\lbrack n\rbrack}{k}$, and may be visualized on a grid as in Figure \ref{fig:higher-root-path}.  See \cite{CE2020B} for the original definition of generalized positive roots and \cite{E2021} for details with more connections and applications to matroid subdivisions, triangulations of generalized root polytopes and the noncrossing complex $\mathbf{NC}_{k,n}$, introduced in Section \ref{sec: all kn proposal}.
\begin{figure}[h!]
	\centering
	\includegraphics[width=0.55\linewidth]{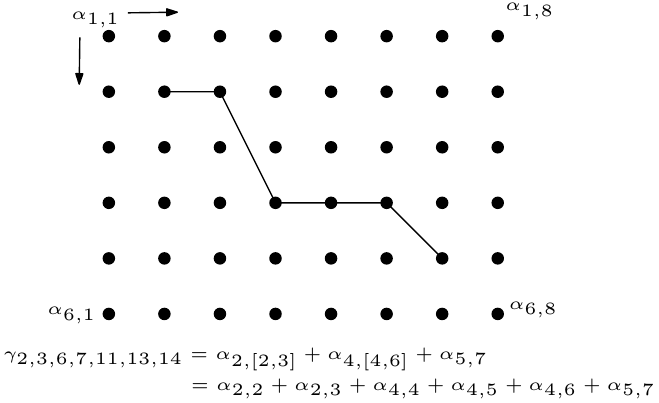}
	\caption{Staircase representation of a generalized positive root.}
	\label{fig:higher-root-path}
\end{figure}

\begin{conjecture}\label{conjecture 3-split 3n}
	The facet of $\mathbf{N}_{3,n}$ which minimizes $\gamma_{j_1,j_2,j_3}$, with $\{j_1,j_2,j_3\}$ totally nonfrozen, has exactly eight (codimension 3) faces which are combinatorially isomorphic to 
	$$\mathbf{N}_{3,j_2-j_1+2} \times \mathbf{N}_{3,j_3-j_2+2}\times \mathbf{N}_{3,j_1-j_3+n+2}.$$
\end{conjecture}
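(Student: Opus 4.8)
The plan is to translate the statement into the language of regular positroidal subdivisions of the hypersimplex $\Delta_{3,n}$. By the tropicalized positive parametrization of Appendix~\ref{sec:positive parametrization}, the face of $\mathbf{N}_{3,n}$ on which a linear functional is minimized corresponds to the regular positroidal subdivision of $\Delta_{3,n}$ that functional induces, and the face poset of $\mathbf{N}_{3,n}$ is the refinement poset of such subdivisions. The whole argument then takes place inside the (one-dimensional) normal cone of the facet in question.

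\emph{Identifying the facet.} Since $\{j_1,j_2,j_3\}$ is totally nonfrozen, $\gamma_{j_1,j_2,j_3}$ spans an extreme ray of the normal fan of $\mathbf{N}_{3,n}$ (by the theory of generalized positive roots \cite{CE2020B,E2021} and the positive parametrization, Appendix~\ref{sec:positive parametrization}); hence the face it minimizes really is a facet $\mathcal{F}$, and the associated regular positroidal subdivision $\mathcal{S}_0$ of $\Delta_{3,n}$ --- the one induced by the height function $\mathfrak{h}_{j_1,j_2,j_3}$ --- is a coarsest one. I would make $\mathcal{S}_0$ explicit by evaluating $\mathfrak{h}_{j_1,j_2,j_3}$, i.e.\ $\min_m L_m(e_L-e_{\{j_1,j_2,j_3\}})$, over the vertices $e_L$ of $\Delta_{3,n}$: its cell structure is governed by the three cyclic intervals $I=[j_1,j_2-1]$, $J=[j_2,j_3-1]$, $K=[j_3,j_1-1]$.

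\emph{The eight faces and their product structure.} For each of the eight rows of \eqref{eq: eight three splits}, I would first verify by inspection that the four $3$-subsets indexing the $\gamma$'s form a weakly separated collection; by the weak separation/matroid subdivision compatibility \cite{Early19WeakSeparationMatroidSubdivision} (as already observed before \eqref{eq: eight three splits}) the four functionals then attain a common minimum on a face $F\subseteq\mathcal{F}$. Exhibiting exactly one linear relation among the four $\gamma$'s of the row, obtained by splicing their staircase paths (Figure~\ref{fig:higher-root-path}), shows that the cone they span in the normal fan is three-dimensional, so $F$ has codimension $3$. For the product decomposition I would analyze the distinguished maximal cell $C$ of the subdivision induced by the sum of the four $\gamma$'s: $C$ is, in the matroid sense with the overlapping ground sets $I\cup\{b,c\}$, $J\cup\{a,c\}$, $K\cup\{a,b\}$ of Conjecture~\ref{conjecture bipyramidal factorization A}, a product of three hypersimplices of type $\Delta_3$, and on $C$ the polynomial $\prod_J p_J$ splits, in three disjoint sets of positive parameters, into the products of Pl\"ucker coordinates over the sub-configuration spaces $X(3,j_2-j_1+2)$, $X(3,j_3-j_2+2)$, $X(3,j_1-j_3+n+2)$ --- because the positive parametrization of $X(3,n)$ restricts to positive parametrizations of those three pieces along the relevant coordinate subtori (Appendix~\ref{sec:positive parametrization}). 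As the Newton polytope of a product in disjoint variables is the product of Newton polytopes, $F$ is combinatorially $\mathbf{N}_{3,j_2-j_1+2}\times\mathbf{N}_{3,j_3-j_2+2}\times\mathbf{N}_{3,j_1-j_3+n+2}$, and a dimension count confirms that this product is all of $F$.

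\emph{Exactly eight.} Finally I would rule out any further codimension-$3$ face of $\mathbf{N}_{3,n}$ inside $\mathcal{F}$ of this product type. Such faces correspond to the appropriate maximal cones of the link of $\mathbb{R}_{\ge 0}\,\gamma_{j_1,j_2,j_3}$ in the normal fan, and for the associated subdivision to have all maximal cells a triple product of $\Delta_3$'s, the two additional degenerations of $\mathcal{S}_0$ one is permitted to impose must be ``localized'' at the cut points $j_1,j_2,j_3$, with exactly two options at each (for instance $\gamma_{j_1,j_1+1,j_3}$ or $\gamma_{j_1,j_2,j_1-1}$ at $j_1$), the single linear relation of the previous paragraph being precisely the consistency condition allowing an independent choice at all three cut points; all $2^3=8$ such choices occur, are realized by the eight rows, and induce pairwise distinct subdivisions (distinguished, say, by their sets of compatible blades). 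The main obstacle is this last exhaustiveness step, together with the precise bookkeeping in the previous paragraph of how $\prod_J p_J$ factors under restriction to the three sub-configuration spaces; both rest on the compatibility of the positive parametrization with passage to subminors, which must be set up carefully. The weak-separation checks and the single linear relations are routine but have to be carried out for all eight rows.
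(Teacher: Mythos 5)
First, a point of order: the paper does not prove this statement --- it is stated and left as Conjecture~\ref{conjecture 3-split 3n}, so there is no proof of record to compare against. Your proposal is therefore an attempt at an open claim, and it has to be judged on its own; as written it contains a concrete error at its pivot point.

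The error is the claim that each row of \eqref{eq: eight three splits} carries ``exactly one linear relation'' among its four $\gamma$'s, so that they span a three-dimensional cone and $F$ has codimension $3$. Both the premise and the inference fail. Take $(j_1,j_2,j_3)=(1,4,7)$ with $n=9$: the first row gives $\gamma_{1,4,7}=\alpha_{1,1}+\alpha_{1,2}+\alpha_{2,3}+\alpha_{2,4}$, $\gamma_{1,2,7}=\alpha_{2,1}+\cdots+\alpha_{2,4}$, $\gamma_{1,4,5}=\alpha_{1,1}+\alpha_{1,2}$, $\gamma_{4,7,8}=\alpha_{1,4}+\alpha_{1,5}$, and these are linearly independent even modulo the $(k-1)$-dimensional lineality $\mathrm{span}\{\sum_t\alpha_{1,t},\sum_t\alpha_{2,t}\}$ of the normal fan of $\mathbf{N}_{3,n}$. (The relations you may have in mind, such as $\gamma_{j_1,j_1+1,j_3}+\gamma_{j_1,j_2,j_1-1}\equiv\gamma_{j_1,j_2,j_3}$ mod lineality, involve \emph{both} alternatives at a cut point, and those never co-occur in a single row.) Moreover, had the normal cone been three-dimensional, $F$ would be a codimension-$3$ face of $\mathbf{N}_{3,n}$, hence codimension $2$ in the facet --- not the codimension $3$ relative to the facet that the conjecture asserts. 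The dimension count you defer to at the end in fact forces independence: $\dim\mathbf{N}_{3,m}=2(m-4)$, so the asserted product has dimension $2n-12=\dim\mathbf{N}_{3,n}-4$, i.e.\ you need the four $\gamma$'s to span a four-dimensional cone mod lineality. This is fixable, but the fix inverts the role you assigned to the (nonexistent) relation, including in the final paragraph where it was supposed to be ``the consistency condition'' behind the count of $2^3=8$.

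Beyond that, the two steps you yourself flag as the obstacles are the actual content of the conjecture and remain unargued: (i) that the initial form of $\prod_J p_J$ with respect to a weight in the relative interior of each four-dimensional cone genuinely factors into three polynomials in disjoint variable groups matching the three sub-configuration spaces (your passage between the hypersimplex-subdivision picture and the Newton-polytope/initial-form picture needs to be made precise here, since ``the distinguished maximal cell $C$'' lives in $\Delta_{3,n}$, not in $\mathbf{N}_{3,n}$); and (ii) exhaustiveness, i.e.\ that no other codimension-$3$ face of the facet is of this product type. As it stands the proposal is a plausible strategy outline with a wrong key lemma, not a proof.
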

\begin{rem}
	Let us point out the following feature: the sum of the products of the reciprocals factors.  Namely, we find
	\begin{eqnarray}
		\frac{1}{\gamma_{j_1,j_2,j_3}}\left(\frac{1}{\gamma _{j_1,j_2,j_1-1}}+\frac{1}{\gamma _{j_1,j_1+1,j_3}}\right) \left(\frac{1}{\gamma _{j_2-1,j_2,j_3}}+\frac{1}{\gamma _{j_1,j_2,j_2+1}}\right) \left(\frac{1}{\gamma _{j_2,j_3,j_3+1}}+\frac{1}{\gamma _{j_1,j_3-1,j_3}}\right),
	\end{eqnarray}
	which clearly generalizes the residue of $m^{(3)}_6$ corresponding to the pole ``$R=0$'' in \cite{CEGM2019}.
\end{rem}

\subsection{Case $k=4$}

For $k=4$ and $n\ge 8$, say, given an ordered set partition $\mathbf{S} = (S_1,S_2,S_3,S_4)$ of $\{1,\ldots, n\}$ with all blocks of size at least two, then there are two combinatorially distinct sets of propagators for $m^{(4)}_n$, type I and type II, say.  Let us be concrete.  For simplicity we list the propagators only when $(k,n) = (4,8)$ and $\mathbf{S} = (18,23,45,67)$; larger $n$ cases may be obtained by substitution, as discussed next.
\begin{rem}
	  In what follows, generic larger $n$ examples of 4-ary factorizations of $m^{(4)}_n$ are obtained from the following table by making the replacement $j \mapsto T_j$ on the right-hand side, where $(T_1,\ldots, T_8)$ is an ordered set partition of $\{1,\ldots, n\}$:
	\begin{eqnarray*}
		\eta_{1,2,3,5} & = & \eta_{(123678_3 45_1)}\\
		\eta_{1,3,5,7} & = & \eta_{(18_1 23_1 45_1 67_1)}\\
		\eta_{1,3,5,8} & = & \eta_{(1678_2 23_1 45_1)}\\
		\eta_{1,3,6,7} & = & \eta_{(18_1 23_1 4567_2)}\\
		\eta_{1,3,7,8} & = & \eta_{(145678_3 23_1)}\\
		\eta_{1,4,5,7} & = & \eta_{(18_1 2345_2 67_1)}\\
		\eta_{1,5,6,7} & = & \eta_{(18_1 234567_3)}\\
		\eta_{2,3,5,7} & = & \eta_{(1238_2 45_1 67_1)}\\
		\eta_{2,3,6,7} & = & \eta_{(1238_2 4567_2)}\\
		\eta_{3,4,5,7} & = & \eta_{(123458_3 67_1)}\\
		\eta_{1,4,5,8} & = & \eta_{(1678_2 2345_2)}.
	\end{eqnarray*}
\end{rem}
	\begin{figure}[h!]
	\centering
	\includegraphics[width=0.6\linewidth]{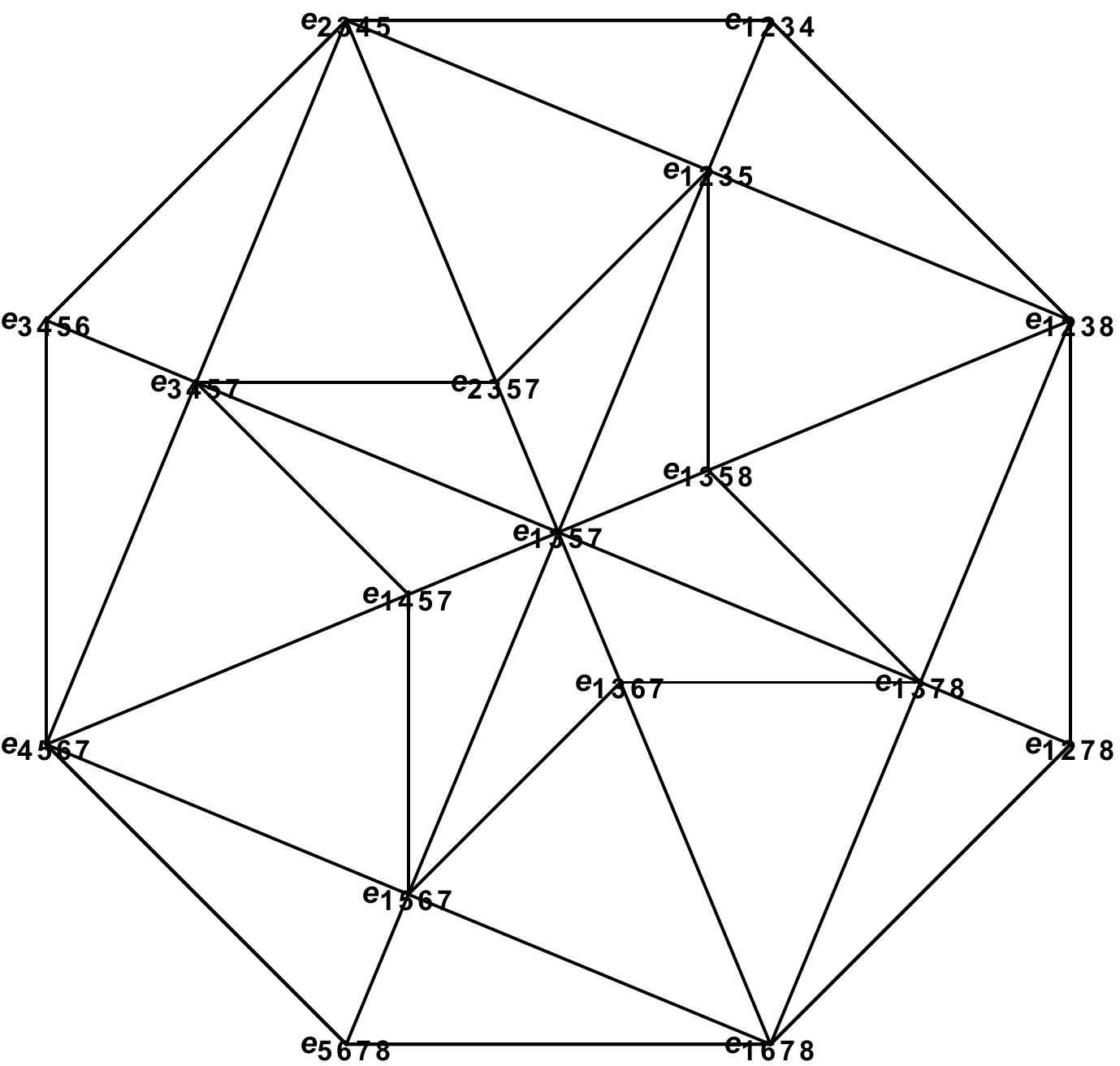}
	\caption{Locations on $\Delta_{4,8}$ of the nine kinematic blades for the 4-ary factorization of type I.}
	\label{fig:4-mass-box-48-plabic-graph A}
\end{figure}

Then the full list of propagators for type I factorization channel associated to $\mathbf{S}$ is
\begin{eqnarray}\label{eq: 18 propagators 48 A}
	\begin{array}{cc}
		\eta _{1,2,3,5} & -\eta _{1,3,5,7}+\eta _{1,2,3,5}+\eta _{1,3,6,7}+\eta _{1,4,5,7} \\
		\eta _{1,3,5,7} & -\eta _{1,3,5,7}+\eta _{1,3,5,8}+\eta _{1,3,6,7}+\eta _{1,4,5,7} \\
		\eta _{1,3,5,8} & -\eta _{1,3,5,7}+\eta _{1,3,5,8}+\eta _{1,3,6,7}+\eta _{2,3,5,7} \\
		\eta _{1,3,6,7} & -\eta _{1,3,5,7}+\eta _{1,3,5,8}+\eta _{1,4,5,7}+\eta _{2,3,5,7} \\
		\eta _{1,3,7,8} & -\eta _{1,3,5,7}+\eta _{1,3,6,7}+\eta _{1,4,5,7}+\eta _{2,3,5,7} \\
		\eta _{1,4,5,7} & -\eta _{1,3,5,7}+\eta _{1,3,7,8}+\eta _{1,4,5,7}+\eta _{2,3,5,7} \\
		\eta _{1,5,6,7} & -\eta _{1,3,5,7}+\eta _{1,3,5,8}+\eta _{1,5,6,7}+\eta _{2,3,5,7} \\
		\eta _{2,3,5,7} & -\eta _{1,3,5,7}+\eta _{1,3,5,8}+\eta _{1,3,6,7}+\eta _{3,4,5,7} \\
		\eta _{3,4,5,7} & -2 \eta _{1,3,5,7}+\eta _{1,3,5,8}+\eta _{1,3,6,7}+\eta _{1,4,5,7}+\eta _{2,3,5,7}.
	\end{array}
\end{eqnarray}
There is a corresponding subdivision of the hypersimplex $\Delta_{4,8}$, whose dual is depicted in Figure \ref{fig:polypositroid4splita I}.

\begin{figure}[h!]
	\centering
	\includegraphics[width=0.4\linewidth]{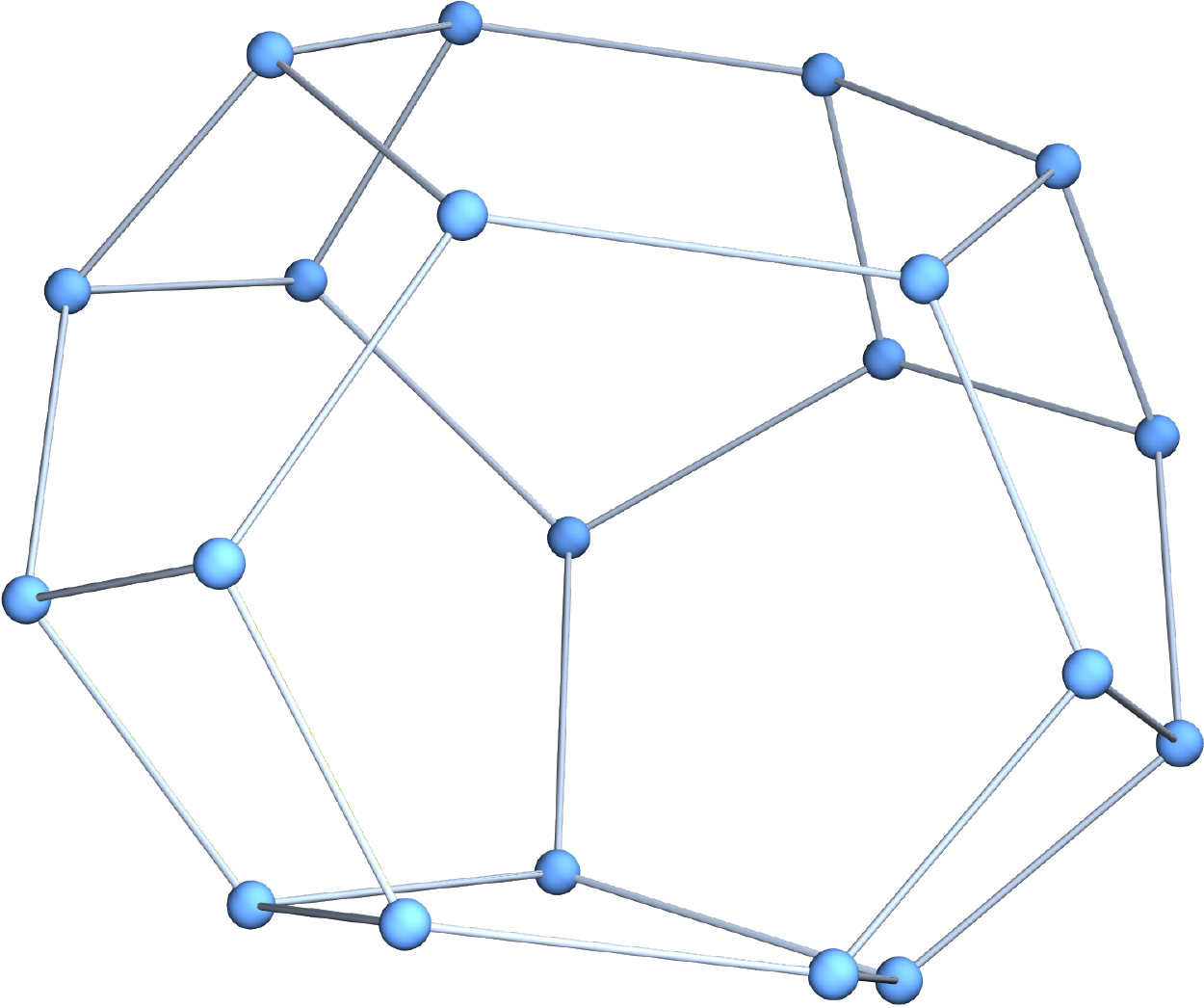}
	\caption{Type I factorization channel, represented as the (dual of the) first of two combinatorially non-isomorphic subdivisions of $\Delta_{4,n}$; this conjecturally always induces a 4-ary factorization of $m^{(4)}_n$.}
	\label{fig:polypositroid4splita I}
\end{figure}
\begin{figure}[h!]
	\centering
	\includegraphics[width=0.4\linewidth]{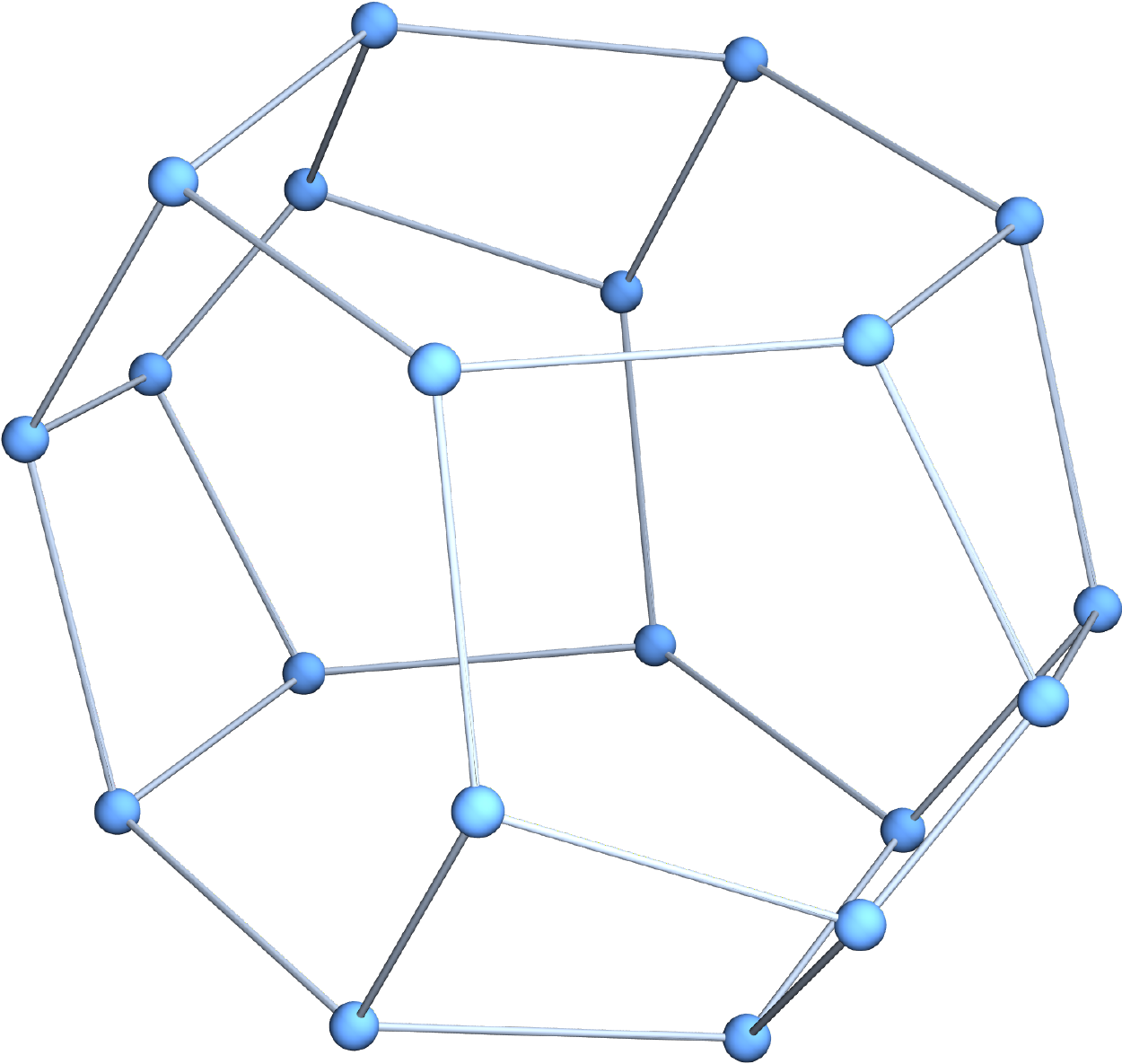}
	\caption{Type II factorization channel, the (dual of the) second of our two combinatorially inequivalent subdivisions of $\Delta_{4,n}$; this also conjecturally induces a four-fold factorization of $m^{(4)}_n$.  The collection of propagators is given in Equation \eqref{eq: 18 propagators A}.}
	\label{fig:polypositroid4splitb II}
\end{figure}
The full list of propagators for the type II factorization channel associated to $\mathbf{S}$ is 
\begin{eqnarray}\label{eq: 18 propagators A}
\begin{array}{cc}
	\eta _{1,2,3,5} & 	\eta _{1,2,3,5}-\eta _{1,3,5,7}+\eta _{1,3,6,7}+\eta _{1,4,5,7} \\
	\eta _{3,4,5,7} &
	-\eta _{1,3,5,7}+\eta _{1,3,5,8}+\eta _{1,3,6,7}+\eta _{1,4,5,7} \\
	\eta _{1,3,5,8} &	-\eta _{1,3,5,7}+\eta _{1,3,7,8}+\eta _{1,4,5,7}+\eta _{2,3,5,7} \\
	\eta _{1,3,7,8} &
	\eta _{1,2,3,5}-\eta _{1,3,5,8}+\eta _{1,3,7,8}+\eta _{1,4,5,8}\\
	\eta _{1,4,5,7} &
	-\eta _{1,3,5,7}+\eta _{1,3,5,8}+\eta _{1,5,6,7}+\eta _{2,3,5,7}\\
	\eta _{1,4,5,8} &
	-\eta _{1,4,5,7}+\eta _{1,4,5,8}+\eta _{1,5,6,7}+\eta _{3,4,5,7}\\
	\eta _{1,5,6,7} &
	-2 \eta _{1,3,5,7}+\eta _{1,3,5,8}+\eta _{1,3,6,7}+\eta _{1,4,5,7}+\eta _{2,3,5,7}\\
	-\eta _{1,3,5,7}+\eta _{1,3,5,8}+\eta _{1,3,6,7}+\eta _{3,4,5,7} &	\eta _{1,2,3,5}-\eta _{1,3,5,7}+\eta _{1,3,6,7}+\eta _{1,4,5,8}+\eta _{3,4,5,7} \\
	-\eta _{1,3,5,7}+\eta _{1,3,5,8}+\eta _{1,4,5,7}+\eta _{2,3,5,7} & 
	-\eta _{1,3,5,7}+\eta _{1,3,7,8}+\eta _{1,4,5,8}+\eta _{1,5,6,7}+\eta _{2,3,5,7}.
\end{array}
\end{eqnarray}

In Figures \ref{fig:polypositroid4splita I} and \ref{fig:polypositroid4splitb II} we present the dual graphs of the positroidal subdivisions of $\Delta_{4,8}$ for the corresponding two combinatorially inequivalent cones in $\text{Trop}G^+(4,n)$ which conjecturally both always induce 4-fold factorizations.
\begin{figure}
	\centering
	\includegraphics[width=0.85\linewidth]{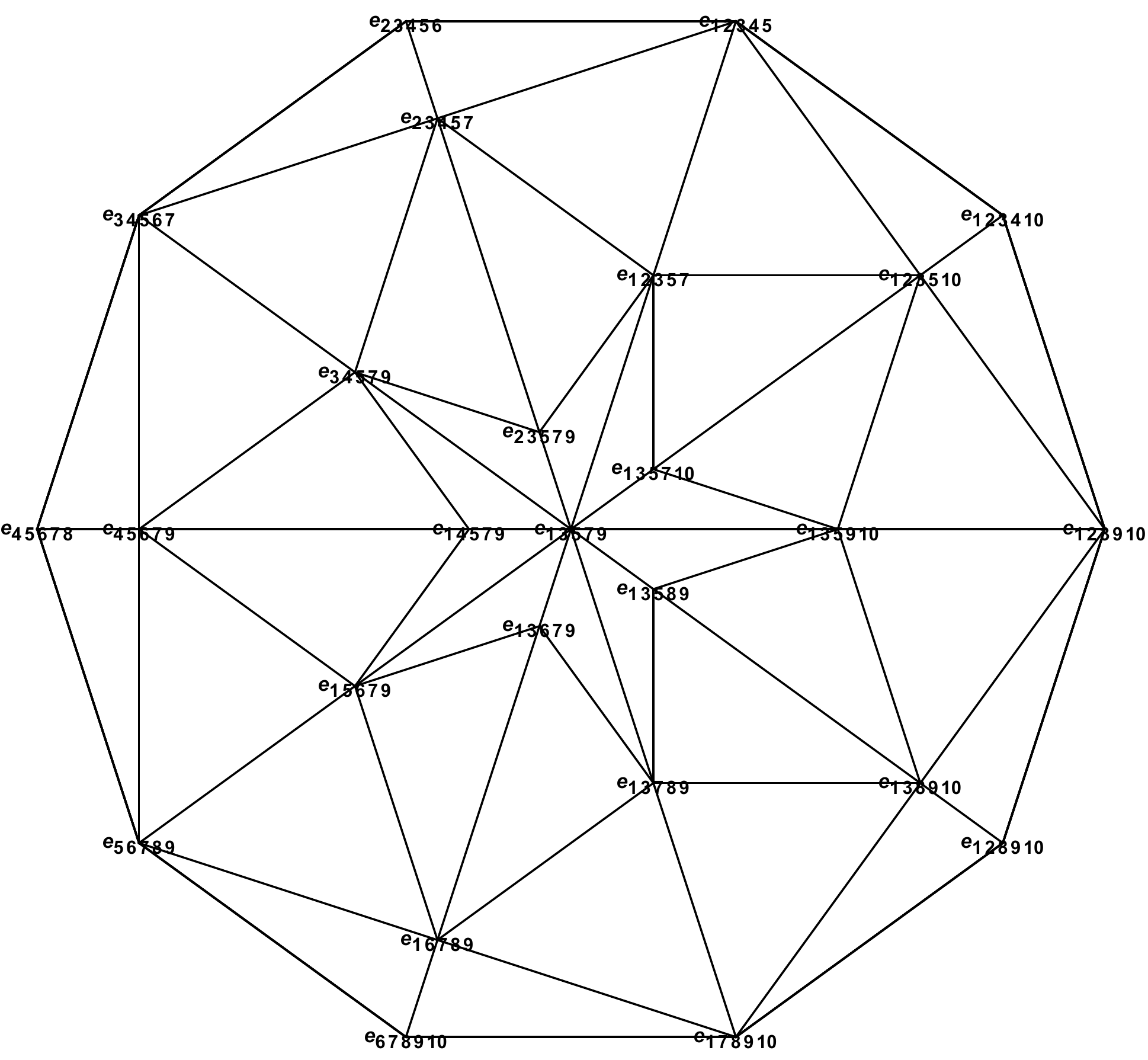}
	\caption{Locations on the hypersimplex $\Delta_{5,10}$ of the 16 kinematic blades in the kinematic propagator for (conjectural) 5-splits of $m^{(5)}_{n}$.  Vertices of the graph form a maximal weakly separated collection in $\mathbf{WS}_{5,10}$. A 5-split is induced by replacing the ordered set partition $\mathbf{S} = (10\ 1,23,45,67,89)$ with some ordered set partition $\mathbf{S} = (S_1,\ldots, S_5)$, using the decorated ordered set partition formula for kinematic blades.  Frozen vertices again included in the figure for clarity.}
	\label{fig:5splitpropagator510graph}
\end{figure}

Finally, in Figure \ref{fig:5splitpropagator510graph} we present the analog of Figure \ref{fig:4-mass-box-48-plabic-graph A}, but for $(k,n) = (5,10)$.  We leave to future work the problem to classify the types of propagators associated to a given set partition with all blocks of size at least 2, as above we did for $k=4$.

\subsection{An all (k,n) Proposal}\label{sec: all kn proposal}

In order to formulate our all (k,n) proposal, let us begin with some definitions and basic results.

 In \cite{LeclercZelevinsky} Leclerc and Zelevinsky introduced the notion of weak separation; for the noncrossing condition, see \cite{PKPS,Pylyavskyy2009} and in particular \cite{GrassmannAssociahedron}.

\begin{defn}
	A pair $I,J \in \binom{\lbrack n\rbrack}{k}$ is said to be \textit{weakly separated}, with respect to the cyclic order $(1,2,\ldots, n)$, provided that the coordinates in the difference  $e_I-e_J$ of vertices $e_I,e_J\in \Delta_{k,n}$ does not contain the pattern $e_a-e_b+e_c-e_d$ for $a<b<c<d$, up to cyclic rotation.
	
	A pair $I,J \in \binom{\lbrack n\rbrack}{k}$ of $k$-element subsets of $\{1,\ldots, n\}$ is said to be \textit{non-crossing}, with respect to the linear order $1<2<\cdots <n$, provided that for each $1\le a<b\le k$, then either
	\begin{enumerate}
		\item The pair $\{\{i_a,i_{a+1},\ldots, i_{b}\},\{j_a,j_{a+1},\ldots, j_b\}\}$ is weakly separated, or 
		\item The interiors of the respective intervals do not coincide, that is we have
		$$\{i_{a+1},\ldots, i_{b-1}\} \not= \{j_{a+1},\ldots, j_{b-1}\}.$$
	\end{enumerate}
\end{defn}
Clearly these is some redundancy in the definition of the noncrossing condition, but in our context it is convenient to keep it like this.

Denote by $\mathbf{WS}_{k,n}$ the poset of all collections of pairwise weakly separated \textit{nonfrozen} $k$-element subsets, ordered by inclusion.  Similarly, let $\mathbf{NC}_{k,n}$ be the poset of all collections of pairwise noncrossing \textit{nonfrozen} $k$-element subsets, ordered by inclusion.

\begin{thm}[\cite{Early19WeakSeparationMatroidSubdivision}]\label{thm: weakly separated matroid subdivision blade}
	Given a collection of vertices $e_{I_1},e_{I_2},\ldots, e_{I_m}\in\Delta_{k,n}$, the height function 
	$$\pi = \sum_{j=1}^m \mathfrak{h}_{I_j}$$
	induces a matroidal (in particular positroidal) subdivision if and only if the collection $\{I_1,\ldots, I_m\}$ is pairwise weakly separated.	
\end{thm}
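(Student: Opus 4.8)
The plan is to reduce the claim to the combinatorics of tropical Plücker relations and then to analyse the height vector $\pi=\sum_{j}\mathfrak{h}_{I_j}$ one octahedral face at a time. Recall that $\pi\in\mathbb{R}^{\binom{[n]}{k}}$ induces a matroidal subdivision of $\Delta_{k,n}$ precisely when it lies in the Dressian — for every $L\in\binom{[n]}{k-2}$ and every $\{a,b,c,d\}$ disjoint from $L$ with $a<b<c<d$ up to cyclic rotation, the minimum of $\pi_{Lab}+\pi_{Lcd}$, $\pi_{Lac}+\pi_{Lbd}$, $\pi_{Lad}+\pi_{Lbc}$ is attained at least twice — and a positroidal subdivision precisely when in addition $\pi_{Lac}+\pi_{Lbd}=\min\{\pi_{Lab}+\pi_{Lcd},\,\pi_{Lad}+\pi_{Lbc}\}$ for all such data; by the results cited above this positive Dressian is $\text{Trop}^+G(k,n)$. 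A frozen subset is a cyclic interval, so its height vector lies in $\text{Lin}_{k,n}$ and affects neither the induced subdivision nor the weak-separation hypothesis; hence we may assume all $I_j$ nonfrozen. It thus suffices to show that $\pi$ is a positive tropical Plücker vector when $\{I_1,\dots,I_m\}$ is pairwise weakly separated — giving a positroidal, hence matroidal, subdivision — and that $\pi$ fails the three-term relation otherwise.

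The technical heart is a purely local lemma. Fix a $k$-subset $I$ and octahedral data $(L,\{a,b,c,d\})$ and compute the six numbers $(\mathfrak{h}_I)_{L\cup\{x,y\}}=-\tfrac1n\min_t L_t(e_{L\cup\{x,y\}}-e_I)$ directly from the definition. It is useful here that $\sum_i v_i=0$ forces $\sum_t L_t(v)=0$, and that the circulant with first row $(0,1,\dots,n-1)$ is nonsingular, so that $\min_t L_t(v)\le 0$ with equality if and only if $v=0$; in particular $(\mathfrak{h}_I)_P\ge 0$ and vanishes exactly when $P=I$. The claim is twofold: first, these six numbers always satisfy the positive tropical Plücker relation at $(L,\{a,b,c,d\})$, equivalently every $\mathfrak{h}_I$ is itself a positive tropical Plücker vector (a single blade cuts $\Delta_{k,n}$ into positroid polytopes); and second, writing $X_I:=(\mathfrak{h}_I)_{Lab}+(\mathfrak{h}_I)_{Lcd}$ and $Y_I:=(\mathfrak{h}_I)_{Lad}+(\mathfrak{h}_I)_{Lbc}$, one has $(\mathfrak{h}_I)_{Lac}+(\mathfrak{h}_I)_{Lbd}=\min\{X_I,Y_I\}$, with the strictly smaller of $X_I,Y_I$ — when there is one — determined by an explicit combinatorial invariant of the triple, essentially by which, if either, of the crossing pairs $\{a,c\},\{b,d\}$ equals $I\cap\{a,b,c,d\}$ and how the remaining elements of $I$ fall into the cyclic gaps between $a,b,c,d$. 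Call the minimizing pair the side \emph{preferred} by $I$ at this octahedron.

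Everything then reduces to an elementary observation: $\sum_j\min\{X_{I_j},Y_{I_j}\}\le\min\{\sum_jX_{I_j},\sum_jY_{I_j}\}$ always, with equality if and only if no two of the $I_j$ strictly prefer opposite sides at the octahedron under consideration. Since $\pi_{Lac}+\pi_{Lbd}=\sum_j\big((\mathfrak{h}_{I_j})_{Lac}+(\mathfrak{h}_{I_j})_{Lbd}\big)=\sum_j\min\{X_{I_j},Y_{I_j}\}$, the positive relation for $\pi$ holds at a given octahedron exactly when no two $I_j$ strictly disagree there; and when two do disagree, $\pi_{Lac}+\pi_{Lbd}$ is the strict minimum of the three terms, so $\pi$ is not even in the Dressian. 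The theorem therefore follows from the combinatorial equivalence: two nonfrozen $I,I'$ strictly prefer opposite sides at some octahedron if and only if they are not weakly separated. For the forward direction of this equivalence — which gives necessity in the theorem — one takes a witnessing quadruple $p<q<r<s$ (cyclically) with $p,r\in I\setminus I'$ and $q,s\in I'\setminus I$ and the octahedron $(L,\{p,q,r,s\})$ for an adapted $L\in\binom{[n]\setminus\{p,q,r,s\}}{k-2}$ — for instance $L=I\setminus\{p,r\}$, disjoint from $\{p,q,r,s\}$ since $q,s\notin I$, so that $(\mathfrak{h}_I)_{L\cup\{p,r\}}=0$ while $(\mathfrak{h}_I)_{L\cup\{q,s\}}>0$ because $e_{L\cup\{q,s\}}-e_I$ realizes the alternating pattern — and checks via the local lemma that $I$ and $I'$ prefer opposite sides there. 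The backward direction — giving sufficiency — is the same local computation read in reverse: opposite strict preferences at $(L,\{a,b,c,d\})$ force $e_I-e_{I'}$ to contain the pattern $e_a-e_b+e_c-e_d$ up to cyclic rotation, i.e.\ $I,I'$ are not weakly separated.

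The main obstacle is this local lemma and the dictionary it installs: computing the six octahedral values of $\mathfrak{h}_I$ in closed form, checking the positive relation in each combinatorial configuration, and isolating ``the preferred side'' as an invariant that coincides on the nose with the alternating-pattern definition of weak separation. Everything else is the bookkeeping around $\sum_j\min\le\min\sum_j$ outlined above.
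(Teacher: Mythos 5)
Your architecture is sound and, as far as the present paper is concerned, there is nothing to compare it against line by line: Theorem \ref{thm: weakly separated matroid subdivision blade} is imported by citation from \cite{Early19WeakSeparationMatroidSubdivision} and is not proved here. The reduction to octahedral faces is the right one (matroidality of the induced subdivision is equivalent to the three-term Dressian condition on every $(L,\{a,b,c,d\})$, positroidality to the positive version), the observation that frozen $I_j$ contribute only lineality is correct, and the bookkeeping $\sum_j\min\{X_{I_j},Y_{I_j}\}\le\min\bigl\{\sum_jX_{I_j},\sum_jY_{I_j}\bigr\}$, with equality precisely when no two summands strictly prefer opposite sides, is exactly what makes the $m\ge 3$ case follow from the $m=2$ case. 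I checked your framework on $k=2$, $n=4$ with $I=\{1,3\}$, $I'=\{2,4\}$ and it behaves as you predict.

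The genuine gap is that the entire content of the theorem has been pushed into the ``local lemma'' and its dictionary, and neither is proved. Part (i), that each $\mathfrak{h}_I$ individually satisfies $(\mathfrak{h}_I)_{Lac}+(\mathfrak{h}_I)_{Lbd}=\min\{X_I,Y_I\}$ on every octahedron, is true but needs an argument; the cheapest route is to invoke that a single blade $((1,\ldots,n))_{e_I}$ induces a multi-split, hence positroidal, subdivision (cf.\ Proposition \ref{prop: equality planar basis} and the proof of Theorem \ref{thm: cone in tropGrass A}), rather than recomputing the six values. Part (ii), the identification of the ``preferred side'' with the crossing pattern defining weak separation, is the heart of the matter and is only described, not established. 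In particular, in your necessity argument the choice $L=I\setminus\{p,r\}$ gives $(\mathfrak{h}_I)_{L\cup\{p,r\}}=0$ and $(\mathfrak{h}_I)_{L\cup\{q,s\}}>0$, but positivity of the latter holds for \emph{every} $P\ne I$ and says nothing about whether $X_I<Y_I$ or $Y_I<X_I$; determining which non-crossing side $I$ strictly prefers, and that $I'$ strictly prefers the other at the \emph{same} octahedron, still requires the closed-form evaluation of the six numbers $-\tfrac1n\min_tL_t(e_{L\cup\{x,y\}}-e_I)$ in terms of how $I$ interleaves $\{p,q,r,s\}$. Until that computation is carried out in all configurations, both directions of the theorem remain open; you have correctly located the obstacle but not removed it.
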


\begin{cor}
	Given any weakly separated collection $\{I_1,\ldots, I_m\}$, then for all $c_{1},\ldots ,c_m \ge 0$ we have 
	$$\pi = \sum_{j=1}^m c_j\mathfrak{h}_{I_j} \in \text{Trop}^+G(k,n).$$
\end{cor}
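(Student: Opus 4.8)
The plan is to prove directly that $\pi = \sum_{j=1}^m c_j \mathfrak{h}_{I_j}$ satisfies every three‑term relation in the definition of $\text{Trop}^+G(k,n)$, working with the explicit representatives $\mathfrak{h}_{I_j}\in\mathbb{R}^{\binom{n}{k}}$. One first disposes of the routine point that these relations are homogeneous and unchanged modulo $\text{Lin}_{k,n}$, so positive rescaling and the choice of representative are immaterial. The engine is the elementary fact that for real numbers $\min\{a,b\}+\min\{a',b'\}\le\min\{a+a',b+b'\}$, and more generally $\sum_j c_j\min\{A_j,B_j\}\le\min\{\sum_j c_j A_j,\sum_j c_j B_j\}$ for $c_j\ge 0$, with equality in the latter precisely when a single choice of side $s\in\{A,B\}$ realizes $\min\{A_j,B_j\}=s_j$ simultaneously for all $j$ with $c_j>0$.

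Then, fixing one tuple $(L,\{a,b,c,d\})$ from the definition of $\text{Trop}^+G(k,n)$, set $A_j=(\mathfrak{h}_{I_j})_{Lab}+(\mathfrak{h}_{I_j})_{Lcd}$ and $B_j=(\mathfrak{h}_{I_j})_{Lad}+(\mathfrak{h}_{I_j})_{Lbc}$. Applying Theorem \ref{thm: weakly separated matroid subdivision blade} to the one‑element collection $\{I_j\}$, and recalling that positive tropical Pl\"ucker vectors are exactly the weights inducing positroidal subdivisions, gives $\mathfrak{h}_{I_j}\in\text{Trop}^+G(k,n)$, i.e. $(\mathfrak{h}_{I_j})_{Lac}+(\mathfrak{h}_{I_j})_{Lbd}=\min\{A_j,B_j\}$. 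Applying the same theorem to the two‑element collection $\{I_j,I_{j'}\}$, which is weakly separated because the whole collection is, gives $\mathfrak{h}_{I_j}+\mathfrak{h}_{I_{j'}}\in\text{Trop}^+G(k,n)$; expanding its relation at $(L,\{a,b,c,d\})$ and substituting the previous identities forces equality in $\min\{A_j,B_j\}+\min\{A_{j'},B_{j'}\}\le\min\{A_j+A_{j'},B_j+B_{j'}\}$, hence the minima of the pairs $(A_j,B_j)$ and $(A_{j'},B_{j'})$ are attained on a common side. Thus, among the indices $j$ with $c_j>0$, the set of sides on which $\min\{A_j,B_j\}$ is attained is a nonempty subset of the two‑element set of sides, and these subsets pairwise intersect; any such family has a common element, so there is a single side $s$ with $\min\{A_j,B_j\}=s_j$ for all $j$ with $c_j>0$.

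It remains to add up: $\pi_{Lac}+\pi_{Lbd}=\sum_j c_j\bigl((\mathfrak{h}_{I_j})_{Lac}+(\mathfrak{h}_{I_j})_{Lbd}\bigr)=\sum_j c_j\min\{A_j,B_j\}=\sum_j c_j s_j=\min\{\sum_j c_j A_j,\sum_j c_j B_j\}=\min\{\pi_{Lab}+\pi_{Lcd},\pi_{Lad}+\pi_{Lbc}\}$, the fourth equality being the equality case of the elementary fact, applicable because $s$ is a common side. Since $(L,\{a,b,c,d\})$ was arbitrary, $\pi\in\text{Trop}^+G(k,n)$.

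I do not expect a genuine obstacle: the only real idea is to route everything through the $m=1$ and $m=2$ instances of Theorem \ref{thm: weakly separated matroid subdivision blade}, after which superadditivity of $\min$ does the work. The one place to be careful is the step ``pairwise common side $\Rightarrow$ global common side'', which is immediate for subsets of a two‑element set but would fail with three or more alternatives. An alternative route would be to first reduce to $c_j\in\mathbb{Z}_{\ge 0}$, using that $\text{Trop}^+G(k,n)$ is a closed cone and clearing denominators, and then view $\pi$ as the height function of the multiset in which each $I_j$ is repeated $c_j$ times; but that shifts the burden onto a version of Theorem \ref{thm: weakly separated matroid subdivision blade} permitting repeated vertices, which is precisely what the argument above supplies on the way.
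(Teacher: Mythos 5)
Your proof is correct. The paper offers no argument for this corollary at all: it is presented as an immediate consequence of Theorem \ref{thm: weakly separated matroid subdivision blade}, with the cone structure taken for granted (implicitly, the $\mathfrak{h}_{I_j}$ all lie in a common secondary cone of the positive Dressian, and such cones are convex). Your route is a genuinely self-contained verification of the three-term relations, and it buys something real: the theorem as stated only covers the unit-coefficient sum $\sum_j \mathfrak{h}_{I_j}$, so the passage to arbitrary $c_j\ge 0$ does require an argument, which you supply. The mechanism is sound: the $m=1$ instances give $(\mathfrak{h}_{I_j})_{Lac}+(\mathfrak{h}_{I_j})_{Lbd}=\min\{A_j,B_j\}$, the $m=2$ instances force, via the equality case of superadditivity of $\min$, a common minimizing side for each pair, and the Helly-type observation for subsets of a two-element set upgrades pairwise agreement to a single global side $s$, after which the weighted sum collapses as claimed. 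The one external input you use --- that a height function induces a positroidal subdivision if and only if it is a positive tropical Pl\"ucker vector --- is exactly the identification the paper itself relies on (its footnote on the positive Dressian), so invoking it is legitimate. Your closing remark about the alternative integral/closure route is also apt, and correctly notes it would require the theorem to tolerate repeated vertices, which your main argument establishes anyway.
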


Now let us fix an ordered set partition $\mathbf{S} = (S_1,\ldots, S_k)$ of $\{1,\ldots, n\}$.

Let $\mathbf{X}(\mathbf{S})$ be the collection of decorated ordered set partitions
\begin{eqnarray}\label{eq: central blade arrangement intro A}
	\left\{(((S_a)_1,(S_{a+1})_1,\ldots, (S_b)_1, (S_{b+1} \cup \cdots \cup S_{a-1})_{k-(b-a+1)})): a<b<b+1<a-1\right\}
\end{eqnarray}
where addition in the subscripts $j$ on the blocks $S_j$ is cyclic modulo $k$.  It is easy to verify that there are $(k-1)^2-1$ decorated ordered set partitions in this collection, having excluded $(\mathbf{S},(1,\ldots, 1))$ itself.

Denote by $\mathcal{N}_{\mathbf{S}} $ the subset $\mathcal{N}_{\mathbf{S}} \subseteq \mathbf{X}(\mathbf{S})$ that indexes the distinct and nonzero kinematic blades $\eta_{(\mathbf{T})}$, as can be calculated using the explicit formula in Definition \ref{defn: kinematic blade A}.  

\begin{rem}
	When all blocks in $\mathbf{S}$ have size at least two, then $\mathcal{N}_{\mathbf{S}}$ still has $(k-1)^2-1$ distinct elements.  More generally, if exactly $d$ blocks in $\mathbf{S}$ have size at least 2, then the size of $\mathcal{N}_{\mathbf{S}}$, together with $\mathbf{S}$, drops to $(d-1)(k-1)$.
\end{rem}
\begin{thm}\label{thm: cone in tropGrass A}
	Fix $(k,n)$ with $2\le k\le n-2$ and let $J \in \binom{\lbrack n\rbrack}{k}^{nf}$ be given.  Define an ordered set partition $\mathbf{S} = (S_1,\ldots, S_k)$, where 
	$$S_1 = \{j_{k}+1,\ldots, j_1\},\ \ S_2 = \{j_{1}+1,\ldots, j_2\},\ldots, S_k = \{j_{k-1}+1,\ldots, j_k\}$$
	modulo $n$.
	
	Then the following is a cone in the positive tropical Grassmannian:
	$$\left\{c_\mathbf{S} \mathfrak{h}_\mathbf{S} + \sum_{\mathbf{T} \in \mathcal{N}_\mathbf{S}}c_\mathbf{T} \mathfrak{h}_\mathbf{T}: c_\mathbf{S},c_\mathbf{T} \ge 0\right\} \subset \text{Trop}^+G(k,n).$$ 
	
\end{thm}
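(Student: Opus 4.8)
The plan is to reduce the statement to the corollary of Theorem~\ref{thm: weakly separated matroid subdivision blade} by rewriting every nonnegatively weighted blade sum as a nonnegatively weighted sum of single-vertex height functions $\mathfrak{h}_{e_I}$ indexed by one pairwise weakly separated collection. Since membership of a vector in $\text{Trop}^+G(k,n)$ means precisely that it satisfies the three-term relations of \cite{SpeyerWilliams2003}, and since the set described in the theorem is exactly the nonnegative span of the finitely many rays $\mathfrak{h}_\mathbf{S}$ and $\mathfrak{h}_\mathbf{T}$ ($\mathbf{T}\in\mathcal{N}_\mathbf{S}$), it is automatically a polyhedral cone and it remains only to prove containment in $\text{Trop}^+G(k,n)$. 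Throughout I would exploit the shape of the family: by the Remark after \eqref{eq: central blade arrangement intro A}, each member of $\{\mathbf{S}\}\cup\mathcal{N}_\mathbf{S}$ is an \emph{interval blade}, obtained from $\mathbf{S}=(S_1,\dots,S_k)$ by keeping a cyclic arc of blocks $S_a,\dots,S_b$ as singletons (all labels $1$) and lumping the complementary cyclic arc into one block $S_{b+1}\cup\cdots\cup S_{a-1}$ carrying label $k-(b-a+1)$, the trivial arc returning $\mathbf{S}$ itself.

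The first step is a local expansion: for each interval blade $\mathbf{T}$ I would show that, modulo the lineality $\text{Lin}_{k,n}$, the height function $\mathfrak{h}_\mathbf{T}$ of Definition~\ref{defn: kinematic blade A} lies in the nonnegative span of $\{\mathfrak{h}_{e_I}:I\in\mathcal V(\mathbf{T})\}$, where $\mathcal V(\mathbf{T})\subset\binom{\lbrack n\rbrack}{k}$ is the set of vertices of $\Delta_{k,n}$ on which the minimum defining $\rho_{(\mathbf{S},\mathbf{r})}$ is attained by at least two of the linear pieces $M_{(\mathbf{S},\mathbf{r})_j}$. Concretely $I\in\mathcal V(\mathbf{T})$ when $I$ meets each singleton block in exactly one point and meets the lumped block in a staircase-compatible pattern; this is the iterate-to-vertices form of the displayed identities $((12_1 34_1 56_1))=\lbrack 12_1\rbrack\boxplus\lbrack 34_1 56_1\rbrack+\lbrack 34_1\rbrack\boxplus\lbrack 56_1 12_1\rbrack+\lbrack 56_1\rbrack\boxplus\lbrack 12_1 34_1\rbrack$ and $((12_1 3456_2))=\lbrack 12_1\rbrack\boxplus\lbrack 3456_2\rbrack$. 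The input here is that a single interval blade is already positroidal and that its positroidal subdivision of the face it is attached to is the one cut by the weakly separated collection $\mathcal V(\mathbf{T})$ --- which is exactly the content of the matroidal blade technology of \cite{Early19WeakSeparationMatroidSubdivision,Early2019PlanarBasis}, so I would either cite it directly or re-derive it from the $\rho$-formula.

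The combinatorial heart is the second step: set $\mathcal V=\bigcup_{\mathbf{T}\in\{\mathbf{S}\}\cup\mathcal{N}_\mathbf{S}}\mathcal V(\mathbf{T})$ and show that $\mathcal V$ is pairwise weakly separated with respect to the cyclic order $(1,\dots,n)$. The mechanism is that an interval blade $\mathbf{T}$ only refines a cell of the subdivision cut by $\mathbf{S}$ (the one on which the piece $M_{(\mathbf{S},\mathbf{r})_j}$ governing the lumped arc is minimal), so the whole family is a \emph{nested} family of blades; to turn this into weak separation I would project to the coarse hypersimplex indexed by the blocks $S_1,\dots,S_k$ (treating each $S_i$ as a super-letter), where two vertices drawn from two interval blades become block-level index sets of ``cyclic arc of singletons plus complementary lump'' type, hence weakly separated by the matroidality of the planar basis attached to $\mathbf{S}$, and then lift back: the forbidden cyclic pattern $e_a-e_b+e_c-e_d$ cannot occur because within each block the chosen index is pinned (singleton blocks) or constrained to a staircase (lumped block), while the blocks lie along the cycle. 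I would prove this by induction on arc length, distinguishing the cases where two arcs are nested, disjoint, or crossing. Granting it, for all $c_\mathbf{S},c_\mathbf{T}\ge 0$ one has $c_\mathbf{S}\mathfrak{h}_\mathbf{S}+\sum_{\mathbf{T}\in\mathcal{N}_\mathbf{S}}c_\mathbf{T}\mathfrak{h}_\mathbf{T}=\sum_{I\in\mathcal V}\lambda_I\,\mathfrak{h}_{e_I}$ modulo $\text{Lin}_{k,n}$ with all $\lambda_I\ge 0$, so this point lies in $\text{Trop}^+G(k,n)$ by the corollary to Theorem~\ref{thm: weakly separated matroid subdivision blade}; passing from $\mathbf{X}(\mathbf{S})$ to its distinct-and-nonzero subfamily $\mathcal{N}_\mathbf{S}$ only reindexes rays.

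The main obstacle is precisely the weak-separation (equivalently, mutual matroidality) claim for the union $\mathcal V$: single interval blades are unconditionally positroidal, so everything hinges on controlling the cyclic four-term pattern simultaneously for every pair of arcs, and the crossing case, together with pairs of arcs sharing the lumped block, is where the argument needs real care. A secondary, more routine point to nail down is the sign assertion in the local expansion (that no cancellation occurs when an interval blade is written over $\mathcal V(\mathbf{T})$), which should follow from the connectedness of the underlying positroid.
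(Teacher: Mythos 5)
Your overall target --- reducing the statement to the corollary of Theorem \ref{thm: weakly separated matroid subdivision blade} --- is the right one and is also the paper's, but the reduction you propose in your first step does not work. The fact you are missing is Proposition \ref{prop: equality planar basis} together with Definition \ref{def: desp bijection}: each height function $\mathfrak{h}_{\mathbf{T}}$ for $\mathbf{T}\in\{\mathbf{S}\}\cup\mathcal{N}_{\mathbf{S}}$ is \emph{already}, modulo $\text{Lin}_{k,n}$ and a positive scale, a single planar-basis element $\mathfrak{h}_{J_{\mathbf{T}}}$ attached at one vertex $e_{J_{\mathbf{T}}}$ of $\Delta_{k,n}$, and it induces a multi-split, which is a \emph{coarsest} regular matroid subdivision. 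Consequently your proposed expansion $\mathfrak{h}_{\mathbf{T}}=\sum_{I\in\mathcal{V}(\mathbf{T})}\lambda_I\,\mathfrak{h}_{e_I}$ over the set of vertices where the minimum is attained at least twice cannot hold with more than one nonzero coefficient: a height function spanning a ray of the secondary fan cannot be a positive combination of height functions inducing distinct nontrivial subdivisions, since such a combination induces their common refinement, and distinct nonfrozen planar-basis elements induce distinct multi-splits. The Minkowski-sum identities you cite, e.g. $((12_1\,34_1\,56_1))=[12_1]\boxplus[34_1 56_1]+\cdots$, are decompositions of the internal faces of a blade as polyhedral complexes, not expansions of height functions over vertex-attached blades.

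This error then contaminates your second step: the collection $\mathcal{V}=\bigcup_{\mathbf{T}}\mathcal{V}(\mathbf{T})$ is the wrong one and is not weakly separated even for a single blade. For instance, both $e_{135}$ and $e_{246}$ lie on internal faces of the tripod $((12_1\,34_1\,56_1))$ in $\Delta_{3,6}$ (so both sit in your $\mathcal{V}$), yet $\{1,3,5\}$ and $\{2,4,6\}$ are not weakly separated; indeed, if weak separation held for all vertices on a blade, every pair of blades would be compatible, contradicting the content of Theorem \ref{thm: weakly separated matroid subdivision blade}. The collection that must be checked is the set of \emph{attachment vertices} $\{J_{\mathbf{T}}:\mathbf{T}\in\{\mathbf{S}\}\cup\mathcal{N}_{\mathbf{S}}\}$, one per blade, obtained by inverting Definition \ref{def: desp bijection}; this is exactly what the paper's proof does before invoking Theorem \ref{thm: weakly separated matroid subdivision blade}. (The paper is itself terse about verifying weak separation of these attachment vertices; that verification --- for which your case analysis of nested, disjoint and crossing arcs would be genuinely useful --- is the step worth writing out carefully, but for the right, much smaller collection.)
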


\begin{proof}
	Given $J$ as in the statement of the Theorem, applying Definition \ref{def: desp bijection} (see also \cite{Early19WeakSeparationMatroidSubdivision}) gives rise to a decorated ordered set partition 
	$$(\mathbf{T},\mathbf{r}) = ((T_1)_{r_1},\ldots, (T_k)_{r_k}) \in \text{OSP}(\Delta_{k,n})$$
	such that 
	$$((1,2,\ldots, n))_{e_J} \cap \Delta_{k,n} = ((T_1)_{r_1},\ldots, (T_k)_{r_k})\cap \Delta_{k,n}.$$
	Here the ordered set partition $(T_1,\ldots, T_d)$ has been formed from $(S_1,\ldots, S_k)$ by joining certain blocks as determined by intersecting the blade with the hypersimplex as indicated.  Now these subdivisions form a particularly well-understood class of regular matroid subdivisions, called multi-splits.  See \cite{SchroeterMultisplits}, as well as \cite{SchroeterThesis} and references therein for an excellent overview.
	
	It follows immediately from Theorem \ref{thm: weakly separated matroid subdivision blade} that their common refinement is also a regular subdivision into positroid polytopes, which in turn characterizes (the relative interior of) a cone of positive tropical Pl\"{u}cker vectors in $\text{Trop}^+G(k,n)$.
\end{proof}

\begin{conjecture}\label{conjecture: main result intro A}
	Fix an ordered set partition $\mathbf{S} = (S_1,\ldots, S_k)$ of $\{1,\ldots, n\}$, consisting of $k$ cyclically contiguous blocks $S_j$ which are cyclic intervals of the form $S_j = \{a,a+1,\ldots, b\}$.  Let $d$ be the number of blocks having size $\vert S_j\vert \ge 2$.   Then, there exists an order $(\mathbf{T}_1,\mathbf{T}_2,\ldots,\mathbf{T}_{(d-1)(k-1)-1},\mathbf{S})$, say, where 
	$$\{\mathbf{T}_1,\mathbf{T}_2,\ldots,\mathbf{T}_{(d-1)(k-1)-1}\} = \mathcal{N}_{\mathbf{S}},$$
	such that the iterated residue is nonzero and satisfies
	$$\text{Res}_{\eta_{(\mathbf{T}_{(d-1)(k-1)-1})}=0}\left(\cdots\left(\text{Res}_{\eta_{(\mathbf{T}_{1})}=0}\left(\text{Res}_{\eta_{(\mathbf{S})}=0}\left(m^{(k)}_n\right)\right)\right)\right) = \prod_{\ell=1}^d m^{(k)}_{n_\ell},$$
	after a suitable identification of kinematic parameters.  Here $n_1+\cdots +n_d = n+d(k-1)$.
\end{conjecture}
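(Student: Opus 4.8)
The plan is to derive the factorization from a degeneration analysis of the CEGM construction, using the matroidal geometry of Theorem~\ref{thm: cone in tropGrass A} to pin down the relevant boundary stratum of $X(k,n)$. Recall that $m^{(k)}_n(\mathfrak s)$ is the sum over the critical points in $X(k,n)$ of the potential $\mathcal S_{k,n}(\mathfrak s)=\sum_{J}\mathfrak s_J\log p_J$, weighted by the biadjoint Parke--Taylor form, and that, in the planar basis, its simple poles lie along the hyperplanes $\eta_{(\mathbf T)}=0$; here, by Definition~\ref{defn: kinematic blade A}, the functional $\eta_{(\mathbf T)}$ on $\mathcal K(k,n)$ is precisely the pairing with the height vector $\mathfrak h_{(\mathbf T)}$, and the pole exists because $\mathfrak h_{(\mathbf T)}$ spans a ray of $\text{Trop}^{+}G(k,n)$ inducing a multi-split of $\Delta_{k,n}$. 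As the kinematics approach the wall $\eta_{(\mathbf S)}=0$ a definite subfamily of critical points runs off to the boundary divisor of $\overline{X(k,n)}$ cut out by the multi-split induced by $\mathfrak h_{\mathbf S}$. By Theorem~\ref{thm: cone in tropGrass A} the walls $\eta_{(\mathbf S)}=0,\ \eta_{(\mathbf T_1)}=0,\ldots$ are mutually compatible --- the $\mathfrak h$'s span a cone of $\text{Trop}^{+}G(k,n)$ whose induced subdivision is the common refinement of these multi-splits --- so the iterated residue, taken in a compatible order, should localize $m^{(k)}_n$ onto the deepest stratum of this arrangement, which I claim is a product $X(k,n_1)\times\cdots\times X(k,n_d)$.

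Combinatorially this deepest stratum is the product face $\mathbf N_{k,n_1}\times\cdots\times\mathbf N_{k,n_d}$ of $\mathbf N_{k,n}$, of codimension $(d-1)(k-1)$ --- exactly the number of residues taken, and exactly the face structure predicted by Conjecture~\ref{conjecture 3-split 3n} for $k=3$ --- with the $n_\ell$ read off from the block sizes of $\mathbf S$ augmented by the glued indices, so that $\sum_\ell n_\ell=n+d(k-1)$. On this stratum the potential separates, $\mathcal S_{k,n}\to\sum_\ell\mathcal S_{k,n_\ell}+(\text{terms linear in the }\eta_{(\mathbf T_i)})$, and the biadjoint form degenerates into the product of the smaller biadjoint forms times a Jacobian factor; summing the surviving critical points then gives $\prod_{\ell=1}^d m^{(k)}_{n_\ell}$ after the stated identification of kinematic parameters. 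Two subsidiary points must be handled. First, the order $(\mathbf T_1,\ldots,\mathbf T_{(d-1)(k-1)-1},\mathbf S)$ must be chosen so that no $\eta_{(\mathbf T_i)}$ is forced to vanish by the relations already imposed among these $(d-1)(k-1)$ invariants --- for $k=3$ this is the single linear relation among the five blades in Conjecture~\ref{conjecture bipyramidal factorization A} --- and one must show such an order exists, which follows from the face poset of the factorization cone (the bipyramidal $\mathcal C^{(3)}$ with $f$-vector $(5,9,6,1)$, and its $k=4$ analogues underlying Equations~\eqref{eq: 18 propagators 48 A} and~\eqref{eq: 18 propagators A}). Second, under the substitution $j\mapsto T_j$ every channel is an inflation of the minimal one in which each block of $\mathbf S$ has size $1$ or $2$, so it suffices to treat the base cases $(k,k+d)$; in the fully generic case $d=k$ these reduce to showing the iterated residue equals the constant $\prod m^{(k)}_{k+1}=1$.

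I expect the main obstacle to be controlling the biadjoint form near a multi-split boundary of $X(k,n)$ when $k\ge 3$. For $k=2$ the boundary is a nodal degeneration of $\mathcal M_{0,n}$ and the CHY integrand factorizes by the classical argument, but for $k\ge 3$ the compactification $\overline{X(k,n)}$ and its boundary divisors inherit the full complexity of $\text{Trop}^{+}G(k,n)$, there is no simple ``gluing of projective configurations,'' and one must still rule out contributions from other strata abutting the same walls. A complementary, more combinatorial route is to work with a generalized-Feynman-diagram expansion: one knows from Theorem~\ref{thm: weakly separated matroid subdivision blade} and its corollary that the weakly separated collection underlying $\mathcal N_{\mathbf S}\cup\{\mathbf S\}$ induces a positroidal subdivision, hence a cone of $\text{Trop}^{+}G(k,n)$; if in addition $m^{(k)}_n$ is realized as a sum over the maximal cones of a complete fan refining $\text{Trop}^{+}G(k,n)$ with simple poles along the $\eta_{(\mathbf T)}$, then the iterated residue reduces to the purely combinatorial assertion that the star of this face is the join of the analogous fans for the $X(k,n_\ell)$, which is exactly the multi-split product structure. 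The verified $k=3$ cases for $n=6,7,8$ and the explicit $k=4$ data fix the conjecture in those families and serve as the inductive anchor; extending the base-case computation of $m^{(k)}_{k+d}$ to general $k$ would close the minimal cases, after which the inflation step reduces the conjecture to the factorization of the boundary stratum alone.
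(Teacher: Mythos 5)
The statement you are proving is Conjecture \ref{conjecture: main result intro A}; the paper offers no proof of it, only the supporting Theorem \ref{thm: cone in tropGrass A} (which establishes that the relevant $\mathfrak{h}$-vectors span a cone of $\text{Trop}^+G(k,n)$) together with explicit verifications for $k=3$, $n=6,7,8$ and partial $k=4$ data. So there is no argument in the paper to compare yours against, and the question is whether your proposal closes the conjecture on its own. It does not: it is a strategy outline in which every load-bearing step is asserted rather than established. Concretely, (i) the claim that the iterated residue localizes $m^{(k)}_n$ onto a product stratum $X(k,n_1)\times\cdots\times X(k,n_d)$ is exactly the content of the conjecture restated in geometric language, and you supply no construction of the compactification $\overline{X(k,n)}$, no identification of its boundary divisors with multi-splits, and no argument excluding contributions from other strata abutting the same walls --- a difficulty you yourself flag as the ``main obstacle''; (ii) the separation of the potential $\mathcal{S}_{k,n}\to\sum_\ell\mathcal{S}_{k,n_\ell}+(\text{linear terms})$ and the factorization of the reduced Jacobian $\det'\Phi^{(k)}$ together with both Parke--Taylor factors are asserted without computation, and for $k\ge 3$ there is no analogue of the CHY nodal-degeneration argument to fall back on; (iii) the existence of a residue order avoiding the linear dependencies among the $\eta_{(\mathbf{T})}$ --- which the paper explicitly warns is subtle, since a wrong order gives zero --- is claimed to ``follow from the face poset of the factorization cone'' with no proof that a nonvanishing order exists for general $(k,n,d)$.

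Two further steps would fail as stated even granting the above. The ``inflation'' reduction to base cases $(k,k+d)$ via $j\mapsto T_j$ presupposes that enlarging blocks commutes with taking iterated residues of $m^{(k)}_n$; this is a nontrivial stability statement about the amplitude under the substitution, not a formal consequence of the combinatorics, and nothing in the paper (or your argument) supplies it. And your alternative combinatorial route is explicitly conditional on $m^{(k)}_n$ admitting an expansion as a sum over maximal cones of a complete simplicial fan refining $\text{Trop}^+G(k,n)$ with simple poles along the $\eta_{(\mathbf{T})}$ --- the existence of such a ``generalized Feynman diagram'' expansion with the required pole structure is itself open for $k\ge 3$ and cannot be used as an input. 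What you have is a reasonable research program consistent with the paper's evidence, anchored by the verified low-$n$ cases; it is not a proof, and the conjecture remains open.
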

We remark that, in the conjecture, naturally one would like to have an explicit identification of indices of the form
$$m^{(k)}_{n_\ell} = m^{(k)}(S_\ell,i_1,\ldots, \hat{i_\ell},\ldots, i_k)$$
for each $\ell$.

Note that the order can be a subtle issue, because the full set of propagators involved is highly linearly dependent in general.  If the wrong order is taken the final residue may be zero!

We have already seen in Equations \eqref{eq: 18 propagators 48 A} and \eqref{eq: 18 propagators A}, that more than one factorization channel can be assigned to a given cyclic ordered set partition; we propose an extension of Conjecture \ref{conjecture: main result intro A} which reflects this.

Let $\mathbf{S} = (S_1,\ldots, S_k)$ be an ordered set partition.  Let $\widehat{\mathbf{X}}_\mathbf{S}$ be the set of all decorated ordered set partitions $(\mathbf{T},\mathbf{r}) \in \text{OSP}(\Delta_{k,n})$ which arise from $(\mathbf{S},(1,\ldots, 1))$ by lumping together adjacent blocks in all possible ways, summing the corresponding decorations of the blocks.  For example, for $\mathbf{S} = (S_1,S_2,S_3,S_4)$, then 
\begin{eqnarray*}
	& & \widehat{\mathbf{X}}_{(S_1,S_2,S_3,S_4)},\\
	& = & \{((S_1)_1 (S_2)_1 (S_3)_1 (S_4)_1),\\
	& & ((S_{12})_2 (S_3)_1 (S_4)_1), ((S_1)_1 (S_{23})_2 (S_4)_1),((S_1)_1 (S_2)_1 (S_{34})_2),((S_{14})_2 (S_2)_1 (S_3)_1),\\
	& &  ((S_{123})_3 (S_4)_1),((S_1)_1 (S_{234})_3),((S_{124})_3 (S_3)_1),((S_{134})_3 (S_2)_1),((S_{12})_2 (S_{34})_2),((S_{14})_2 (S_{23})_2)\},
\end{eqnarray*}
where we abbreviate $S_{i\cdots j} = S_i\cup \cdots \cup S_j$.

From these 11 decorated ordered set partitions we construct 11 positive tropical Pl\"{u}cker vectors, all of which are rays; but by taking linear combinations with negative integer coefficients and intersecting with $\text{Trop}^+G(4,n)$ we obtain a total of  29 rays.

However, these do not all lie in the same cone in the positive tropical Grassmannian $\text{Trop}^+G(4,n)$.  In fact they generate exactly four maximal cones, coming in two combinatorially inequivalent permutations classes, as in Equations \eqref{eq: 18 propagators 48 A} and \eqref{eq: 18 propagators A}.

Finally, we illustrate what can happen when some blocks are singletons.
\begin{example}
	For the ordered set partition $\mathbf{S} = (S_1,S_2,S_3) = (1,23,456)$, we obtain four decorated ordered set partitions
	\begin{eqnarray}
		(1_1 23_1 456_1),\ (123_2 456_1),\ (4561_2 23_1),\ (1_1 23456_2)
	\end{eqnarray}
	but among the corresponding kinematic blades only two are distinct and nonzero:
	\begin{eqnarray*}
		(\eta_{(1_1 23_1 456_1)},\ \eta_{(123_2 456_1)},\ \eta_{(4561_2 23_1)},\ \eta_{(1_1 23456_2)}) & =& (\eta_{(4561_2 23_1)},\eta_{(123_2 456_1)},\eta_{(4561_2 23_1)} ,0) \\
		& = & (\eta_{136},\eta_{236},\eta_{136}, 0),
	\end{eqnarray*}
	so that $\mathcal{N}_{\mathbf{S}} $ indexes
	$$\{ \eta_{(123_2 456_1)},\ \eta_{(4561_2 23_1)}\}.$$
	On the other hand, moving to $k=4$, if $\mathbf{S} = (1,23,45,678)$ then the nonzero kinematic blades, indexed by $\mathcal{N}_{\mathbf{S}}$ are 
	\begin{eqnarray*}
	\left\{\eta _{(123_2,45_1,678_1)},\eta _{(6781_2,23_1,45_1)},\eta _{(67812_2,2345_2)},\eta _{(12345_3,678_1)},\eta _{(678123_3,45_1)},\eta _{(456781_3,23_1)}\right\}.
\end{eqnarray*}
	If $\mathbf{S} = (1,23,4,5678)$ then $\mathcal{N}_{\mathbf{S}} $ indexes
	\begin{eqnarray*}
		\left\{\eta _{(1234_3,5678_1)},\eta _{(56781_2,234_2)},\eta _{(456781_3,23_1)}\right\}.
	\end{eqnarray*}
\end{example}

We now make a small digression which provides background needed in order to state Item (3) in Conjecture \ref{conj: general subdivision}.  In this work, we did not emphasize it, but the construction which follows plays an essential role throughout in our work with the positive tropical Grassmannian.

For each $J \in \binom{\lbrack n\rbrack}{k}^{nf}$ we define a generalized positive root by the equation 
$$v_J = \sum_{i=1}^{k-1} e_{i,\lbrack j_i-(i-1),j_{i+1}-i-1\rbrack}.$$
Of course, taking the linear dual we obtain $\gamma_J(\alpha)$ from above.

In \cite{SpeyerWilliams2003}, Speyer and Williams constructed a certain bijection $\text{Trop}\Psi:  \text{Trop}^+G(k,n) \rightarrow\mathbb{R}^{(k-1)\times (n-k)}$, which in turn induces a bijection between the quotient spaces $\text{Trop}^+G(k,n) \slash \text{Lin}_{k,n}$ and $\mathbb{T}^{(k-1,n-k)}$; now in the parameterization $M$ from Appendix \ref{sec:positive parametrization}, this latter bijection is induced by the map $\text{proj}^{Rt}_{k,n}: \mathbb{R}^{\binom{n}{k}} \rightarrow \mathbb{T}^{k-1,n-k}$, defined by 
$$\mathfrak{h}_J \mapsto \begin{cases}
	v_J, & J \in \binom{\lbrack n\rbrack}{k}^{nf}\\
	0, & J \in \binom{\lbrack n\rbrack}{k}^{frzn}
\end{cases}.$$


In the Conjecture which follows, we again allow some blocks to be singletons.

\begin{conjecture}\label{conj: general subdivision}
	With $\widehat{\mathbf{X}}_\mathbf{S}$ as above, let $\mathbf{C}$ be any maximal cone in 
	\begin{eqnarray}
		\text{span}\left\{\mathfrak{h}_{(\mathbf{S},\mathbf{r})}: (\mathbf{S},\mathbf{r}) \in \widehat{\mathbf{X}}_\mathbf{S} \right\} \cap \text{Trop}^+G(k,n).
	\end{eqnarray}
	\begin{enumerate}
		\item The cone $\mathbf{C}$ has dimension $(d-1)(k-1)$, where $d$ is the number of non-singleton blocks in $\mathbf{S}$ (we emphasize here that the cone $\mathbf{C}$ will not in general be simplicial!).
		\item The set of cones in $\text{Trop}^+G(k,n)$ containing $\mathbf{C}$ forms a polyhedral complex that is isomorphic to the Cartesian product $\text{Trop}^+G(k,n_1)\times \cdots \times \text{Trop}^+G(k,n_d)$, after modding out by the subspace spanned by $\mathbf{C}$.
		\item Let $\pi$ be in the relative interior of the cone $\mathbf{C}$.  Then, if all $c_{i,j}>0$, the face of $\mathbf{N}_{k,n}$ that minimizes the linear function, dual to the vector $\text{proj}^{Rt}_{k,n}(\pi)$,
		is combinatorially isomorphic to the Cartesian product $\mathbf{N}_{k,n_1}\times \cdots \times \mathbf{N}_{k,n_d}$.				
		\item The statement analogous to Conjecture \ref{conjecture: main result intro A} holds almost verbatim: given any collection of $(d-1)(k-1)$ compatible and linearly independent propagators which are dual to rays of $\mathbf{C}$, then the multi-dimensional residue of $m^{(k)}_n$ is either zero, or a product 
		$$\text{Res}_{\eta_{(\mathbf{T}_{(d-1)(k-1)-1})}=0}\left(\cdots\left(\text{Res}_{\eta_{(\mathbf{T}_{1})}=0}\left(\text{Res}_{\eta_{(\mathbf{S})}=0}\left(m^{(k)}(12\ldots n,12\cdots n)\right)\right)\right)\right) = \prod_{\ell=1}^d m^{(k)}_{n_\ell},$$
		depending on the order in the sequence of residues.  Here as usual $n_1+\cdots +n_d = d(k-1)$.
	\end{enumerate}
	
\end{conjecture}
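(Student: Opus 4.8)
The plan is to treat the four items in order, reducing the first three to the polyhedral geometry of multi-splits of the hypersimplex and then feeding the resulting boundary structure into a factorization statement for the CEGM integrand. Throughout, the role of singleton blocks is only to control which members of $\widehat{\mathbf X}_\mathbf S$ give distinct and nonzero kinematic blades: as the Examples above illustrate, a blade in which a singleton block retains its own decoration either vanishes or coincides with one coming from a coarser lumping, so $\mathbf C$ and all of its combinatorics depend only on the $d$ non-singleton blocks; the argument below is uniform in $d$, and one may keep the generic case $d=k$ (all $\vert S_j\vert\ge 2$) in mind for notational simplicity.

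For Item (1), I would compute $\operatorname{proj}^{Rt}_{k,n}(\mathfrak h_{(\mathbf T,\mathbf r)})$ for each $(\mathbf T,\mathbf r)\in\widehat{\mathbf X}_\mathbf S$ from Definition \ref{defn: kinematic blade A} together with the rule $\mathfrak h_J\mapsto v_J$: every such image is an explicit nonnegative integer combination of the staircase vectors $v_J$ of Figure \ref{fig:higher-root-path}. Since the Speyer--Williams map is a bijection on the lineality quotient, $\dim\mathbf C=\dim\operatorname{proj}^{Rt}_{k,n}(\mathbf C)$, so it suffices to exhibit inside this set of images a spanning subset of size $(d-1)(k-1)$ --- the images coming from $\mathbf S$ and from $\mathcal N_\mathbf S$ are the natural candidates --- and to account for the remaining balancing relations, of the type displayed in Equation \eqref{eq: balancing equations A}, to see the rank is exactly $(d-1)(k-1)$; this is a finite, if tedious, linear-algebra check. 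That a generic nonnegative combination of the $\mathfrak h_{(\mathbf T,\mathbf r)}$ lies in $\operatorname{Trop}^+G(k,n)$ and spans a genuine cone of that dimension is then immediate from the Corollary to Theorem \ref{thm: weakly separated matroid subdivision blade}, because all the $\mathbf T$ occurring are pairwise weakly separated --- they refine the common multi-split subdivision attached to $\mathbf S$ --- and non-simpliciality of $\mathbf C$ is already visible in the $k=4$ f-vector $(18,108,\dots)$.

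For Item (2), I would first describe the positroidal subdivision $\mathcal D_\mathbf C$ of $\Delta_{k,n}$ cut out by a ray-generic $\pi\in\operatorname{relint}\mathbf C$, namely the common refinement of the multi-splits indexed by $\widehat{\mathbf X}_\mathbf S$; its maximal cells are matroid polytopes combinatorially isomorphic to hypersimplices $\Delta_{k,n_\ell}$, up to products with simplices, with $n_1+\cdots+n_d=n+d(k-1)$. The crucial point is that, by the weak-separation characterization of Theorem \ref{thm: weakly separated matroid subdivision blade}, any positroidal refinement of $\mathcal D_\mathbf C$ is determined by its restrictions to these $d$ cells, and that those restrictions may be prescribed independently, since any pair of nonfrozen $k$-subsets supported on different cells is automatically weakly separated by the multi-split structure. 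Transporting this through the bijection between positive tropical Pl\"ucker vectors refining $\mathcal D_\mathbf C$ and cones of $\operatorname{Trop}^+G(k,n)$ containing $\mathbf C$, and modding out by $\operatorname{span}\mathbf C$, produces exactly $\operatorname{Trop}^+G(k,n_1)\times\cdots\times\operatorname{Trop}^+G(k,n_d)$. Item (3) then follows: under the positive parametrization $M$ the faces of $\mathbf N_{k,n}$ are in inclusion-reversing bijection with the cones of $\operatorname{Trop}^+G(k,n)$, so the face minimizing the functional dual to $\operatorname{proj}^{Rt}_{k,n}(\pi)$ is the one matching the star found in Item (2); equivalently, one checks directly that on the associated degeneration of $M$ the polynomial $\prod_J p_J$ factors block by block, whence the Newton polytope of its leading term is $\mathbf N_{k,n_1}\times\cdots\times\mathbf N_{k,n_d}$.

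Finally, for Item (4) I would argue that reaching the codimension-$(d-1)(k-1)$ locus through the residue sequence $\operatorname{Res}_{\eta_{(\mathbf S)}=0},\operatorname{Res}_{\eta_{(\mathbf T_1)}=0},\dots$ amounts, by Items (1)--(3), to restricting the CEGM integral to the boundary stratum $\overline X(k,n_1)\times\cdots\times\overline X(k,n_d)$, on which the scattering potential and the squared Parke--Taylor integrand factorize, so the iterated residue equals $\prod_\ell m^{(k)}_{n_\ell}$; for an order not compatible with the flag of faces of $\mathbf C$ some intermediate residue lands on a stratum of the wrong dimension, or off the normal-crossing locus, and vanishes, which is the linear-dependence phenomenon flagged after Conjecture \ref{conjecture: main result intro A}. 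I expect this last step to be the genuine obstacle: making precise ``the integrand factorizes near the stratum'' requires real control of the CEGM scattering equations near matroidal degenerations of $X(k,n)$, which is exactly the analytic content the conjecture packages. The most workable rigorous route is to sidestep the integral using the expansion of $m^{(k)}_n$ over compatible collections of decorated ordered set partitions modulo cyclic rotation --- the generalized Feynman diagrams --- where each $\operatorname{Res}_{\eta_{(\mathbf T)}=0}$ restricts the sum to diagrams having $\mathbf T$ as a leaf and, after all $(d-1)(k-1)$ residues, the surviving diagrams biject with $d$-tuples of diagrams for the factor amplitudes, so that the sum factors as claimed; the wrong-order vanishing then becomes the statement that an incompatible partial restriction leaves no diagrams at all. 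Since that expansion is at present established only for small $k$, this route would make Item (4) unconditional for $k=3,4$ and conditional for $k\ge 5$.
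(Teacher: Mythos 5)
The statement you are addressing is Conjecture \ref{conj: general subdivision}; the paper offers no proof of it — it is presented as an open conjecture, supported only by low-$(k,n)$ computations — so there is no argument of the paper's to compare yours against. Your proposal is a sensible programme, but it is an outline rather than a proof, and it contains one substantive error. You assert that ``all the $\mathbf T$ occurring are pairwise weakly separated,'' and you lean on this both for the dimension count in Item (1) (via the Corollary to Theorem \ref{thm: weakly separated matroid subdivision blade}) and for the independence-of-restrictions step in Item (2). This is false for the full collection $\widehat{\mathbf X}_\mathbf S$: the paper observes that for $k=4$ the eleven height functions generate $29$ rays which fall into \emph{four} distinct maximal cones of two combinatorial types (Equations \eqref{eq: 18 propagators 48 A} and \eqref{eq: 18 propagators A}); were the collection pairwise weakly separated, a single cone of $\text{Trop}^+G(k,n)$ would contain all of them. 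Since the conjecture quantifies over \emph{any} maximal cone $\mathbf C$ of the intersection, your argument must first identify which subcollection of rays generates a given $\mathbf C$, and only then invoke weak separation; Theorem \ref{thm: cone in tropGrass A} supplies this only for the particular subcone generated by $\mathbf S\cup\mathcal N_\mathbf S$, which yields a dimension lower bound but neither the upper bound nor the structure of an arbitrary maximal $\mathbf C$.

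Beyond that, the decisive steps are deferred rather than carried out: the ``finite, if tedious, linear-algebra check'' for Item (1) must be done uniformly in $(k,n)$ and $\mathbf S$, not case by case; the ``crucial point'' in Item (2) — that positroidal refinements of the common multi-split can be prescribed independently on the $d$ maximal cells, and that these cells are combinatorially $\Delta_{k,n_\ell}$ with $n_\ell=\vert S_\ell\vert+k-1$ — is essentially the content of the conjecture and is asserted without argument; and you yourself identify Item (4) as the genuine obstacle, with both proposed routes (factorization of the CEGM integrand near matroidal degenerations, or the generalized Feynman-diagram expansion) resting on statements that are unproven in the required generality. The proposal is therefore a reasonable plan of attack consistent with the paper's framework, but it does not establish the conjecture.
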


\section{Wrap-up: Splitting $m^{(3)}_n$ into Biadjoint Scalars $m^{(2)}_n$; Coarsest Matroidal Subdivisions}\label{sec: wrap up}
So far we have focused on factorization of residues of $m^{(k)}_n$ as products of the form $m^{(k)}_{n_1}\cdots m^{(k)}_{n_d}$, but this is not the end of the story.  In this section, we illustrate in an example a general formula which factorizes residues of $m^{(3)}_n$ into products of the form $m^{(2)}_{n_1}\cdot m^{(2)}_{n_2} \cdot m^{(2)}_{n_3}$, where $n_1+n_2+n_3 = n+6$.  We shall start with an example and then propose a general formula.

In Figure \ref{fig:3split315 A}, we depict the dual graph to the subdivision of $\Delta_{3,15}$ that is induced by the positive tropical Pl\"{u}cker vector 
\begin{eqnarray}\label{eq:315 trop Pluck A}
	\mathfrak{h}_{2,9,15}+\mathfrak{h}_{3,9,15}+\mathfrak{h}_{4,6,15}+\mathfrak{h}_{4,7,15}+\mathfrak{h}_{4,8,15}+\mathfrak{h}_{4,9,11}+\mathfrak{h}_{4,9,12}+\mathfrak{h}_{4,9,13}+\mathfrak{h}_{4,9,14}+\mathfrak{h}_{4,9,15},
\end{eqnarray}
noting that the triples form a weakly separated collection, which means that they lie in the same cone of $\text{Trop}^+G(3,15)$, and the symmetry around the center $\mathfrak{h}_{4,9,15}$.

Here is a natural generalization of Equation \eqref{eq:315 trop Pluck A}.  For any totally nonfrozen triple $\{i,j,k\}$, define a (positive tropical Pl\"{u}cker) vector
\begin{eqnarray}
	\pi & = & \mathfrak{h}_{i,j,k} + \sum_{t = k+2}^{i-2} \mathfrak{h}_{t,j,k} + \sum_{t = i+2}^{j-2} \mathfrak{h}_{i,t,k}+\sum_{t = j+2}^{k-2} \mathfrak{h}_{i,j,t}.
\end{eqnarray}
The question is to prove that the residue of $m^{(3)}_n$ determined by $\pi$ factorizes as 
$$m^{(3)}_n \sim m^{(2)}_{j-i+2}\cdot m^{(2)}_{k-j+2}\cdot m^{(2)}_{i-k+n+2}.$$

This appears to be one of several possible ways to get such 3-splits into biadjoint scalar amplitudes $m^{(2)}_{n'}$'s; we leave the full exploration to future work.

\subsection{Proposal to Construct Coarsest Subdivisions}
In this section, we present a formula involving the noncrossing complex $\mathbf{NC}_{k,n}$ to construct a whole slew of coarsest matroid subdivisions of $\Delta_{k,n}$, and thereby lay the foundation for future work \cite{EarlyPrep}.  We formulate our main conjecture and then illustrate it with several calculations and numerous figures with explanations in the captions.

\begin{figure}[h!]
	\centering
	\includegraphics[width=0.6\linewidth]{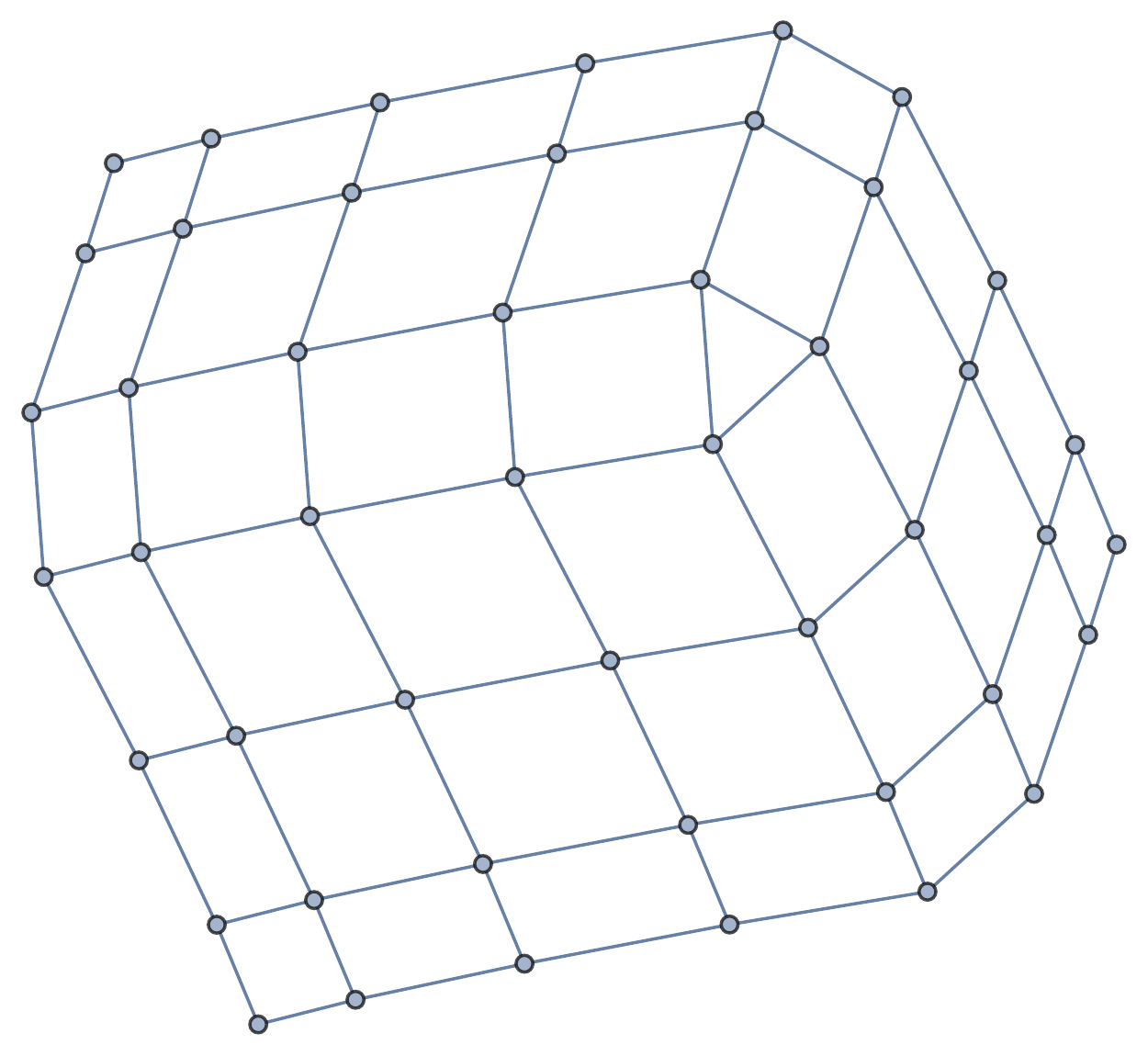}
	\caption{Dual to the positroidal subdivision of the hypersimplex $\Delta_{3,15}$ that is induced by the positive tropical Pl\"{u}cker vector in Equation \eqref{eq:315 trop Pluck A}; nodes in the graph are placed at centers of cells in the subdivision.  Edges are drawn in the graph to connect adjacent cells.  The residue of $m^{(3)}_{15}$ factorizes as a product $m^{(2)}_6 \cdot m^{(2)}_7\cdot m^{(2)}_8$.}
	\label{fig:3split315 A}
\end{figure}

\begin{conjecture}\label{conj: noncrossing expansion coarsest subdivision}
	Fix a noncrossing collection $\mathcal{J} = \{J_1,\ldots, J_m\} \in \mathbf{NC}_{k,n}$, with $m\ge 2$.  Draw a graph $\mathcal{G}$  with vertex set labeled by the collection $\mathcal{J}$ and edges $(J_i,J_j)$ whenever $\{J_i,J_j\}$ is not weakly separated.  
	
	If $\mathcal{G}$ is the complete graph on $m$ nodes, then the positive tropical Pl\"{u}cker vector
			$$\mathcal{F}^{(k)}_n\left(\sum_{j=1}^m v_{J_j}\right)$$	
			generates a ray of $\text{Trop}^+G(k,n)$; equivalently, the positroidal subdivision of $\Delta_{k,n}$ that it induces is coarsest.  This constitutes an all (k,n) prescription for constructing simple poles of $m^{(k)}_n$.
\end{conjecture}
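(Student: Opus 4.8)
The plan is to split the statement into two claims and prove each separately: (i) the vector $\pi_{\mathcal J}:=\mathcal F^{(k)}_n\bigl(\sum_{j=1}^m v_{J_j}\bigr)$ is a genuine positive tropical Pl\"ucker vector that is nonzero modulo $\text{Lin}_{k,n}$, so it induces a nontrivial positroidal subdivision $\mathcal D_{\mathcal J}$ of $\Delta_{k,n}$; and (ii) $\mathcal D_{\mathcal J}$ is coarsest, i.e.\ the only positroidal subdivisions that $\mathcal D_{\mathcal J}$ refines are itself and the trivial one. Claim (ii) is equivalent to saying that $\mathbb R_{\ge 0}\,\pi_{\mathcal J}+\text{Lin}_{k,n}$ is a ray of $\text{Trop}^+G(k,n)$, which is what we want.

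For (i), I would first make $\mathcal F^{(k)}_n$ explicit as the section of $\text{proj}^{Rt}_{k,n}$ obtained by inverting the Speyer--Williams parametrization $\text{Trop}\Psi$ through the identification $\mathbb R^{(k-1)\times(n-k)}\cong \text{Trop}^+G(k,n)/\text{Lin}_{k,n}$: on each maximal cone of the associated fan it is linear, and on the cone associated to a single blade it inverts $\text{proj}^{Rt}_{k,n}$ componentwise, sending $v_J\mapsto \mathfrak h_J$. The key lemma is that for a noncrossing collection $\mathcal J\in\mathbf{NC}_{k,n}$ the point $w:=\sum_j v_{J_j}$ lies in the relative interior of a single cone $\sigma_{\mathcal J}$ of this fan; this is exactly where I would invoke the dictionary between generalized positive roots $v_J$, the noncrossing complex $\mathbf{NC}_{k,n}$, and the cone structure of $\text{Trop}^+G(k,n)$ developed in \cite{E2021} and used implicitly throughout the paper (and, for $k=3$, made visible by the cubical fibration of the preceding subsection). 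Linearity of $\mathcal F^{(k)}_n$ on $\sigma_{\mathcal J}$ then produces $\pi_{\mathcal J}$ explicitly; it lies in $\text{Trop}^+G(k,n)$ by construction, and it is nonzero modulo $\text{Lin}_{k,n}$ because $\text{Trop}\Psi$ is a bijection and $w\neq 0$.

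Claim (ii) is the heart of the matter. I would describe the secondary cone of $\mathcal D_{\mathcal J}$ inside $\text{Trop}^+G(k,n)$ --- the positive tropical Pl\"ucker vectors $\pi'$ whose induced subdivision coarsens $\mathcal D_{\mathcal J}$ --- and show it equals $\mathbb R_{\ge 0}\,\pi_{\mathcal J}+\text{Lin}_{k,n}$. The hypotheses ``noncrossing'' and ``$\mathcal G$ complete'' do the work in the upper bound on its dimension: each pair $\{J_i,J_j\}$ that is \emph{not} weakly separated realizes, by the definition of weak separation, the pattern $e_a-e_b+e_c-e_d$ in $e_{J_i}-e_{J_j}$, hence contributes a three-term tropical Pl\"ucker relation $\pi_{Lac}+\pi_{Lbd}=\min\{\pi_{Lab}+\pi_{Lcd},\pi_{Lad}+\pi_{Lbc}\}$ that is tight on $\mathcal D_{\mathcal J}$ (the minimum is attained twice), and this tightness is a wall equality that must persist for any $\pi'$ in the secondary cone. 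Because \emph{every} pair contributes such a relation, the resulting linear system pins $\pi'$ down to a single ray modulo lineality. I would make this concrete through the dual graph of $\mathcal D_{\mathcal J}$ as drawn in Figures~\ref{fig:39sixchambersubdivision Intro} and \ref{fig:3split315 A}: show that its edges are in bijection with the non-weakly-separated pairs $(J_i,J_j)$, that the only way to coarsen $\mathcal D_{\mathcal J}$ is to contract an edge, and that the noncrossing condition forces the two cells being merged to meet along a facet whose removal creates a polytope that violates the matroid exchange property --- so no proper positroidal coarsening exists.

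The main obstacle is precisely the rigidity argument in (ii): obtaining \emph{some} positroidal subdivision is already handled by Theorem~\ref{thm: weakly separated matroid subdivision blade}, but weakly separated collections of size $\ge 2$ generically give \emph{non}-coarsest subdivisions (common refinements of several multi-splits), so the content is that strengthening ``weakly separated'' to ``noncrossing with complete entanglement graph $\mathcal G$'' restores indecomposability. I expect the cleanest route is an induction on $n$ (and on $k$) using the two families of facets $\Delta_{k,n-1}$ and $\Delta_{k-1,n-1}$ of $\Delta_{k,n}$ together with the induced restriction of noncrossing collections, reducing to small base cases verifiable by the SageMath computations already reported for $k=3,4$; the delicate point there is to check that the ``$\mathcal G$ complete'' property is inherited by the relevant facet restrictions, which is plausible but not automatic. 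Once (i) and (ii) are in place, the final assertion --- that this yields an all $(k,n)$ prescription for simple poles of $m^{(k)}_n$ --- is immediate, since a ray of $\text{Trop}^+G(k,n)$ corresponds under $\text{proj}^{Rt}_{k,n}$ to a codimension-one face of $\mathbf N_{k,n}$, hence to a simple (rather than higher-order) pole of the CEGM integrand.
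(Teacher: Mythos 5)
First, a point of order: the statement you are proving appears in the paper only as Conjecture \ref{conj: noncrossing expansion coarsest subdivision}; the paper offers no proof of it, only examples, figures and computations as supporting evidence. So your proposal cannot be checked against an argument in the text --- it has to stand on its own, and as written it is a strategy whose decisive steps remain open.

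Part (i) is essentially free: $\mathcal{F}^{(k)}_n$ is by construction a parametrization of $\text{Trop}^+G(k,n)$, so its value at any point of $\mathbb{R}^{(k-1)\times(n-k)}$ is automatically a positive tropical Pl\"{u}cker vector; the only content is nondegeneracy modulo $\text{Lin}_{k,n}$, which your argument handles. The genuine gap is in (ii). You assert that each non-weakly-separated pair $\{J_i,J_j\}$ ``contributes a three-term tropical Pl\"{u}cker relation that is tight on $\mathcal{D}_{\mathcal{J}}$.'' But the pattern $e_a-e_b+e_c-e_d$ occurring in $e_{J_i}-e_{J_j}$ is a statement about two vertices of $\Delta_{k,n}$, whereas a tropical Pl\"{u}cker relation is indexed by a pair $(L,\{a,b,c,d\})$ with $|L|=k-2$; you neither produce the relevant $L$ nor show that the minimum is attained twice for $\pi_{\mathcal{J}}$ --- and $\pi_{\mathcal{J}}$ is a \emph{signed} combination of height functions $\mathfrak{h}_I$ obtained by pushing $\sum_j v_{J_j}$ through the piecewise-linear map $\mathcal{F}^{(k)}_n$, so tightness cannot be read off pair by pair from the $J_j$ alone. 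Even granting the tight relations, the dimension count is missing: you must show the secondary cone has dimension exactly one modulo lineality, i.e.\ that among the $\binom{m}{2}$ wall equalities enough are independent and that no further degrees of freedom survive; ``pins $\pi'$ down to a single ray'' is asserted, not derived. Moreover, the caption of Figure \ref{fig:notweaklyseparatedcoarsest318a} exhibits a coarsest subdivision whose graph $\mathcal{G}$ is \emph{not} complete, so the clean correspondence you posit between edges of $\mathcal{G}$, edges of the dual graph of $\mathcal{D}_{\mathcal{J}}$, and obstructions to coarsening cannot hold as a bijection. The closing induction on $(k,n)$ via facet restrictions is a reasonable program, but, as you yourself note, heredity of the completeness hypothesis under those restrictions is unverified, so the scheme does not yet close.
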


\begin{figure}[h!]
	\centering
	\includegraphics[width=0.45\linewidth]{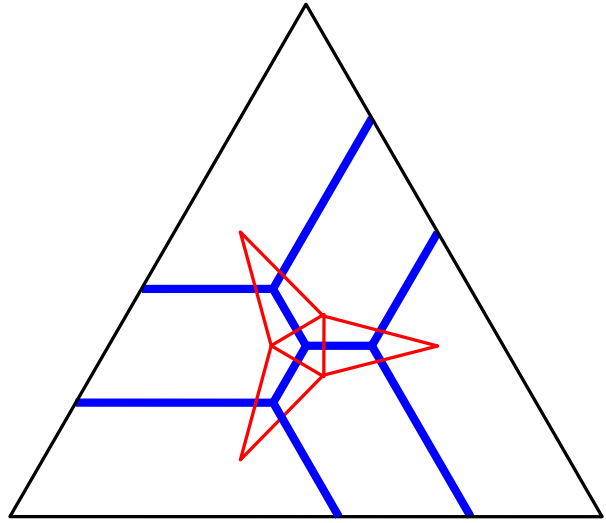}
	\caption{Matroidal weighted blade arrangement (thick, blue lines) and its dual (thin, red lines); this is not a multi-split (for multi-splits of hypersimplices, see \cite{SchroeterMultisplits}), but still any nonempty intersection of $d$ cells has the expected codimension $d-1$.  This captures a section of a certain coarsest positroidal subdivision of $\Delta_{3,12}$; it has six maximal cells and is induced by the positive tropical Pl\"{u}cker vector $\mathcal{F}^{(3)}_{12}(v_{1,6,9} + v_{2,5,10}) =-\mathfrak{h}_{1,5,9}+\mathfrak{h}_{1,5,10}+\mathfrak{h}_{1,6,9}+\mathfrak{h}_{2,5,9}$.}
	\label{fig:39sixchambersubdivision}
\end{figure}
\begin{figure}[h!]
	\centering
	\includegraphics[width=0.5\linewidth]{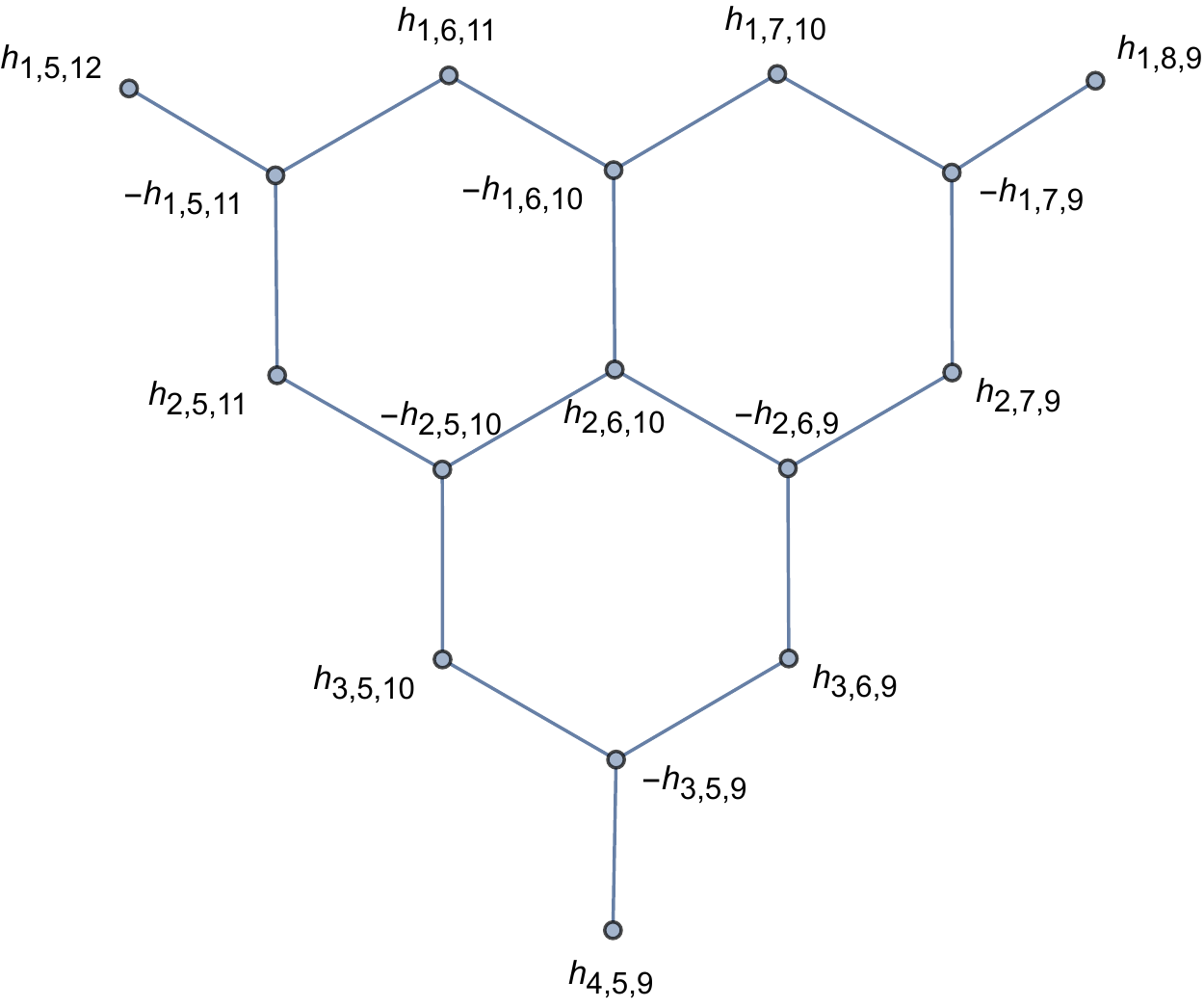}
	\caption{First nontrivial example of Conjecture \ref{conj: noncrossing expansion coarsest subdivision}.  Construction of a positive tropical Pl\"{u}cker vector from a linear combination of generalized positive roots: $\mathcal{F}^{(3)}_{12}\left(v_{1,8,9}+v_{2,7,10}+v_{3,6,11}+v_{4,5,12}\right)$.  Compare to Figure \ref{fig:missinghexagonfillededgedirections}.}
	\label{fig:completegraph312}
\end{figure}
\begin{figure}[h!]
	\centering
	\includegraphics[width=0.5\linewidth]{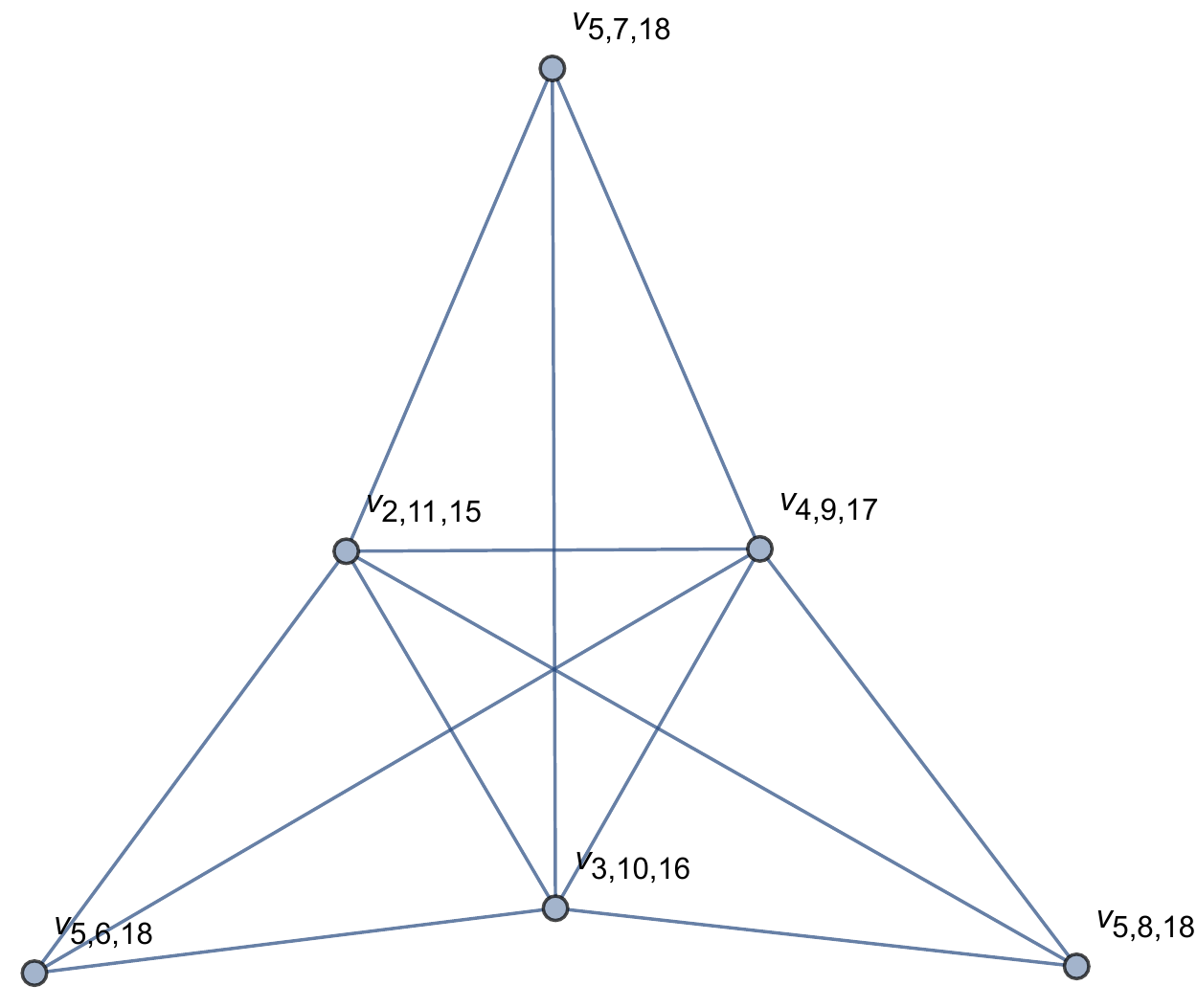}
	\caption{Noncrossing vertex set of generalized positive roots $v_{a,b,c}$; an edge connects two nodes $v_{a,b,c},v_{a',b',c'}$ if their index sets are \textit{not} weakly separated.  See Figure \ref{fig:notweaklyseparatedcoarsest318a} for the matroidal weighted blade arrangement.}
	\label{fig:notweaklyseparatedcoarsest318}
\end{figure}

\begin{figure}[h!]
	\centering
	\includegraphics[width=0.85\linewidth]{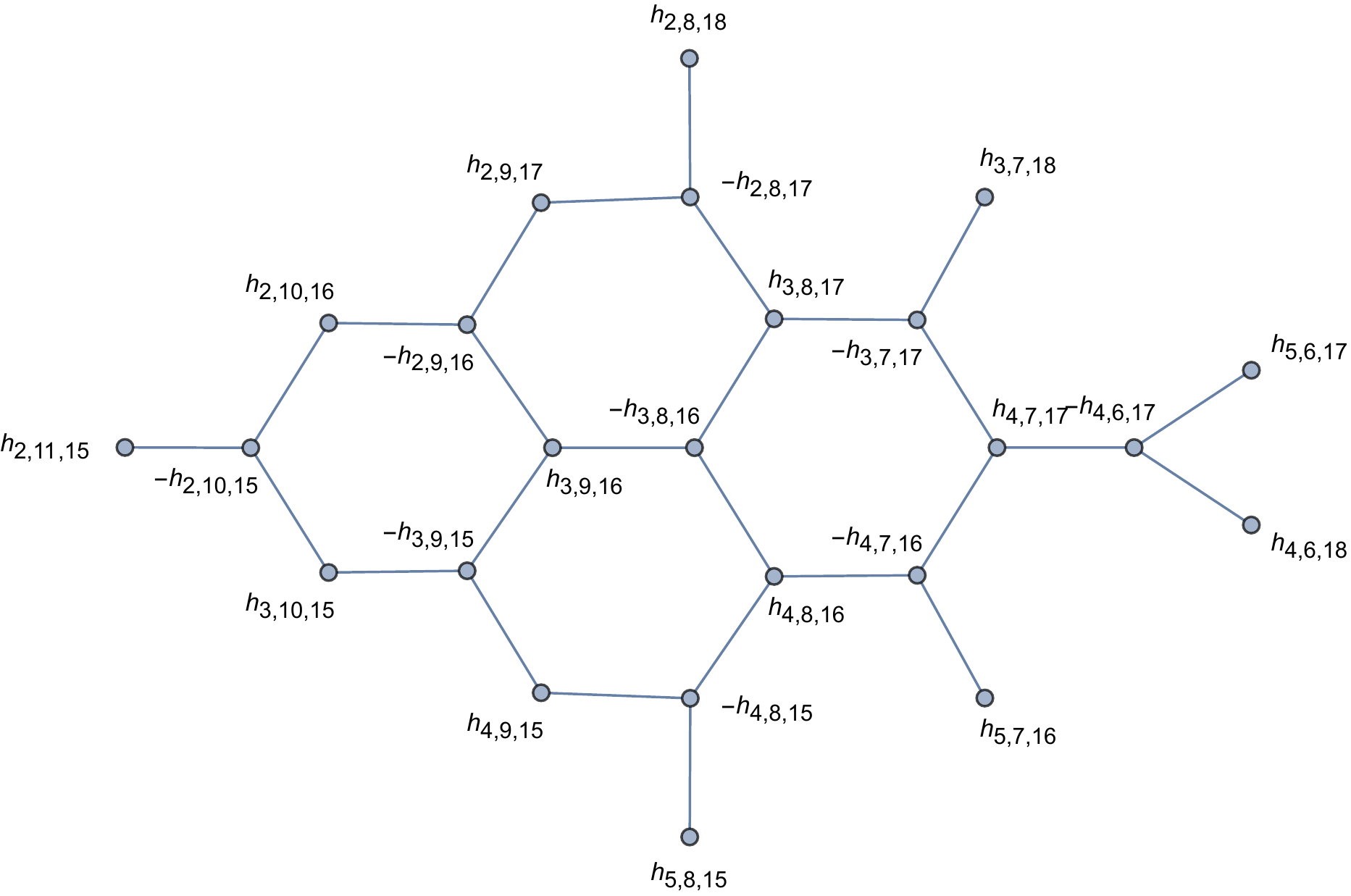}
	\caption{Locations of the blades in the matroidal weighted blade arrangement on the 1-skeleton of $\Delta_{3,19}$; take the weighted sum of the nodes to get a positive tropical Pl\"{u}cker vector which induces a coarsest positroidal subdivision of $\Delta_{3,19}$.  Replacing in this expression $\mathfrak{h}_{a,b,c}$ with $\eta_{a,b,c}$ gives a propagator for $m^{(3)}_{19}$, that is it characterizes a simple pole.  Constructed from Figure \ref{fig:notweaklyseparatedcoarsest318} as $\mathcal{F}^{(3)}_{19}(v_{2,11,15}+v_{3,10,16}+v_{4,9,17}+v_{5,6,18}+v_{5,7,18}+v_{5,8,18})$.  The graph formed from these triples $\{a,b,c\}$ does not form a complete graph, but still it gives rise to a coarsest subdivision!  This suggests the possibility to explore much further.}
	\label{fig:notweaklyseparatedcoarsest318a}
\end{figure}

\begin{example}\label{example: 48 subdivision A}
	In Figure \ref{fig:39sixchambersubdivision}, we present
	$$\mathcal{F}^{(3)}_{12}(v_{1,6,9} + v_{2,5,10}) =-\mathfrak{h}_{1,5,9}+\mathfrak{h}_{1,5,10}+\mathfrak{h}_{1,6,9}+\mathfrak{h}_{2,5,9}$$
	as a matroidal weighted blade arrangement.
	
	In Figure \ref{fig:completegraph312} we present the ray $\mathcal{F}\left(v_{1,8,9}+v_{2,7,10}+v_{3,6,11}+v_{4,5,12}\right)$ of the positive tropical Grassmannian $\text{Trop}^+G(3,12)$.  In the summation, the graph formed with nodes the indices of the generalized positive roots $v_{a,b,c}$ is a complete graph, with edges as defined in Conjecture \ref{conj: noncrossing expansion coarsest subdivision}, where pairs of nodes are connected if their labels are not weakly separated.
	
	Also consider  
	\begin{eqnarray*}
		& & \mathcal{F}^{(4)}_8(v_{1,4,6,7}+v_{2,3,6,8}+v_{2,4,5,8}) \\
		& = & \mathfrak{h}_{1,3,5,7}-\mathfrak{h}_{1,3,5,8}-\mathfrak{h}_{1,3,6,7}+\mathfrak{h}_{1,3,6,8}-\mathfrak{h}_{1,4,5,7}+\mathfrak{h}_{1,4,5,8}+\mathfrak{h}_{1,4,6,7}-\mathfrak{h}_{2,3,5,7}+\mathfrak{h}_{2,3,5,8}+\mathfrak{h}_{2,3,6,7}+\mathfrak{h}_{2,4,5,7},
	\end{eqnarray*}
	see Figure \ref{fig:48subdivisioncoarsesta}.  We include several more examples with explanations in the captions.
\end{example}
\begin{figure}[h!]
	\centering
	\includegraphics[width=0.55\linewidth]{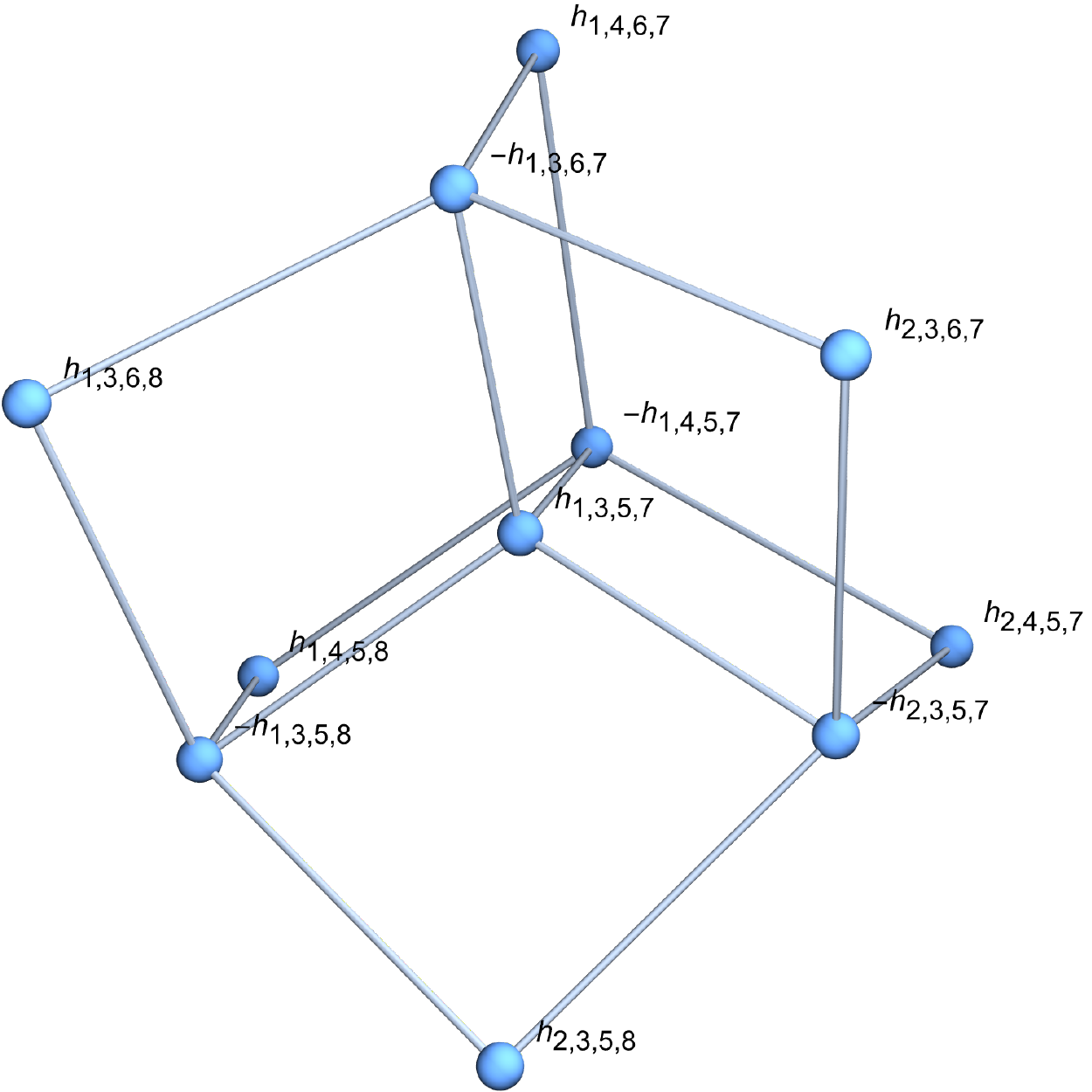}
	\caption{Locations of the blades on the 1-skeleton of the hypersimplex $\Delta_{4,8}$ for Example \ref{example: 48 subdivision A}.  The translation is that if $\mathfrak{h}_{a,b,c,d}$ appears then the blade $\beta = ((1,2,\ldots, 8)$ is placed at the vertex $e_{abcd} := e_a+e_b+e_c+e_d$ of $\Delta_{4,8}$, with the coefficient as a weight.  To be explicit, the positive tropical Pl\"{u}cker vector can be obtained by evaluating $\mathcal{F}^{(4)}_8(v_{1,4,6,7}+v_{2,3,6,8}+v_{2,4,5,8})$.}
	\label{fig:48subdivisioncoarsesta}
\end{figure}

\begin{figure}[h!]
	\centering
	\includegraphics[width=0.9\linewidth]{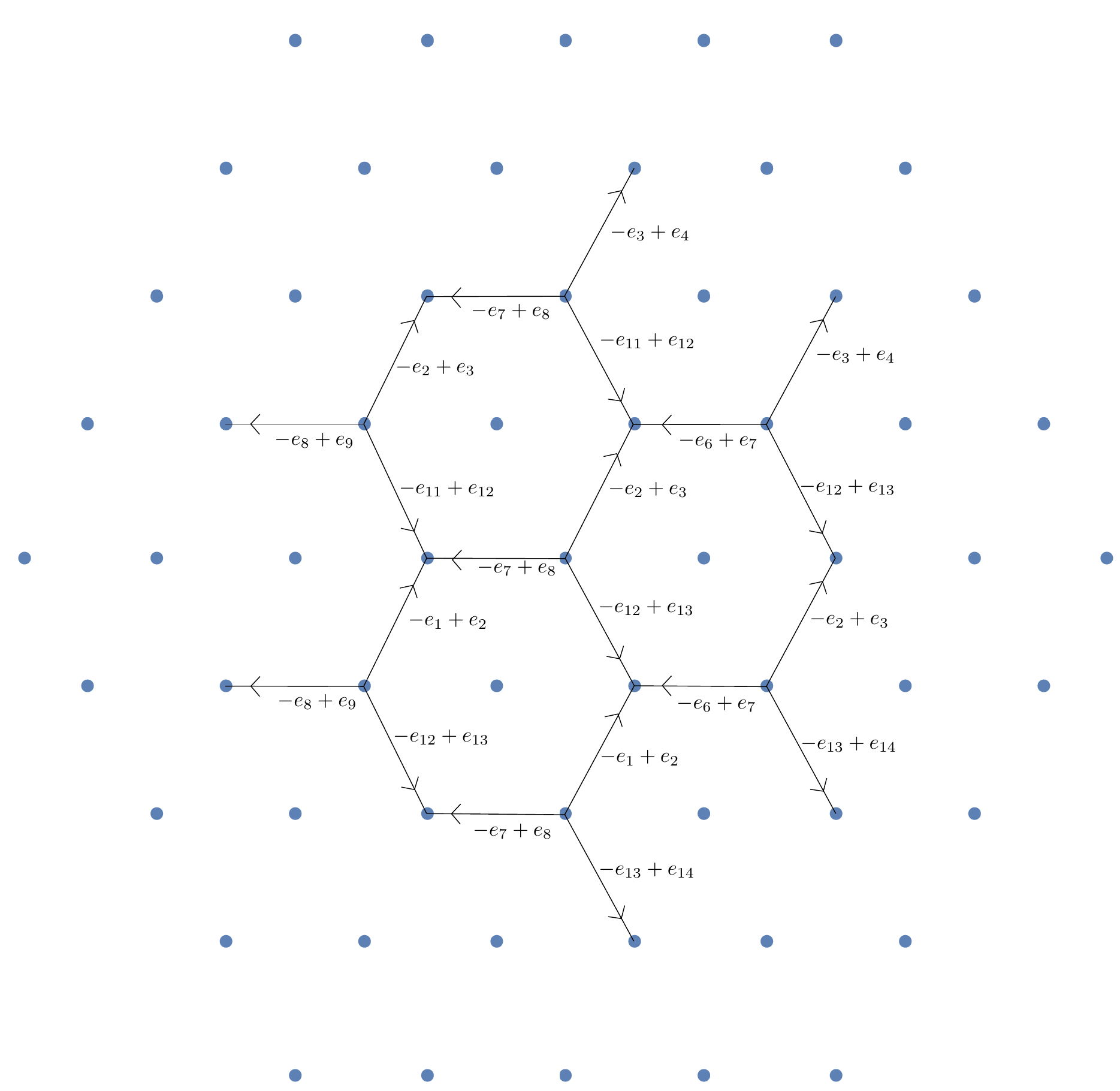}
	\caption{Blade-theoretic representation of a coarsest positroidal subdivision, a ray in $\text{Trop}^+G(3,n)$ for large $n\gg 14$, say; here it is shown embedded in the \textit{root lattice} of $SL_3$.  Labeling the edges (with translation vectors $e_i-e_{i+1}$) can be helpful, in addition to the (equivalent) direct labeling of the vertices with blades $\beta_{a,b,c}$.  Note the structure of the sets of parallel edges: labels are the same.  Non-uniform hexagonal tessellations can be obtained by replacing parallel directions $e_i-e_{i+1}$ correspondingly with larger intervals $-e_i+e_{i+r_i}$ with $r_i\ge2$.  See Figure \ref{fig:hexagonzonotopaldeformation}.  Arrows on the edges provide an intuitive way to encode $\pm1$ coefficients.}
	\label{fig:missinghexagonfillededgedirections}
\end{figure}

\begin{figure}[h!]
	\centering
	\includegraphics[width=1\linewidth]{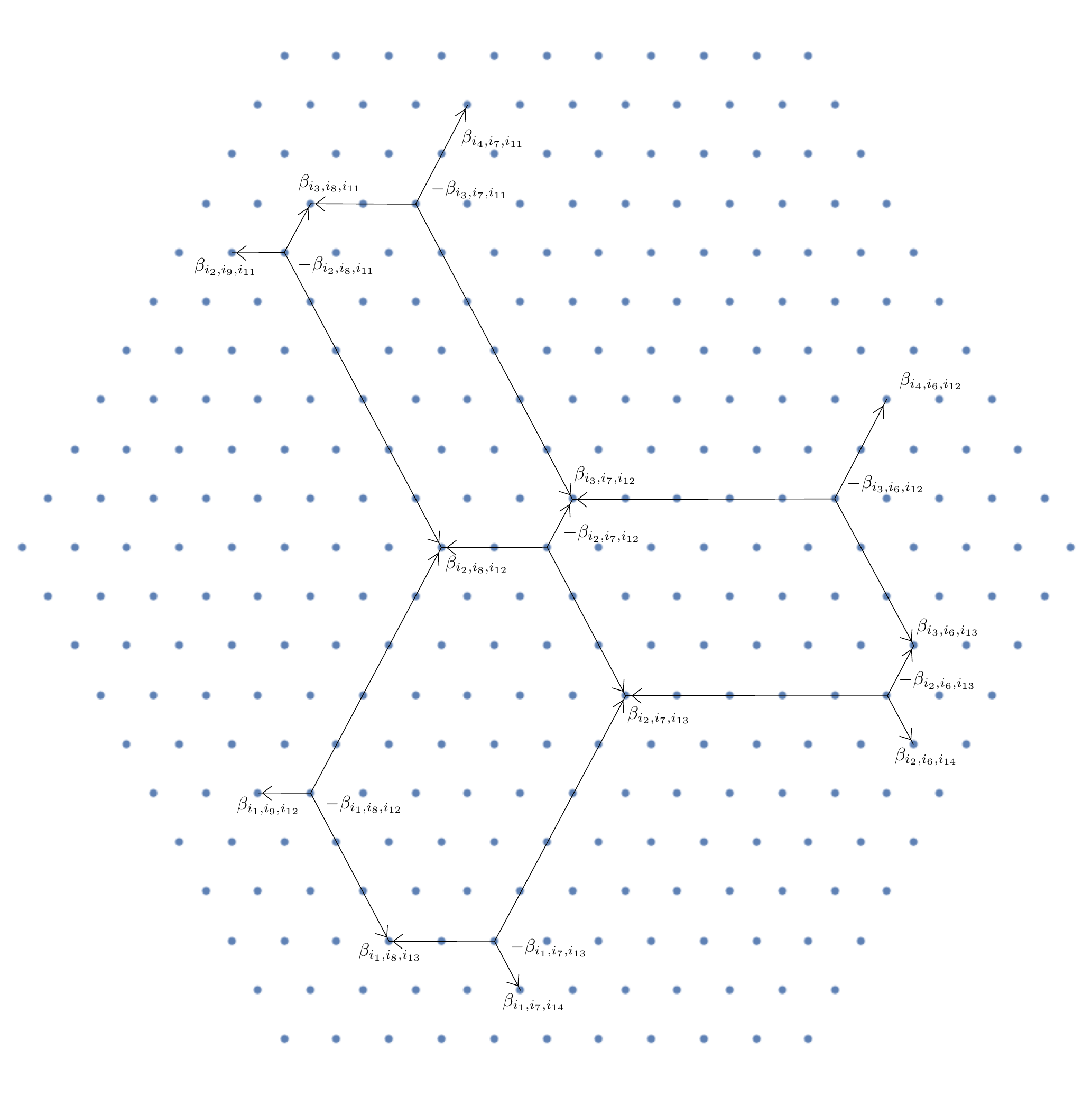}
	\caption{A possible deformation of Figure \ref{fig:missinghexagonfillededgedirections}, shown here embedded into the root lattice for $SL_3$. Each hexagon is a projection of a cube in the 1-skeleton of $\Delta_{3,n}$.  Here we have not specified the values of the elements $i_j$; rather we specify their \textit{consecutive differences}, by the lengths of the line segments: for instance, the edge connecting $-\beta_{i_2,i_7,i_{12}}$ and $\beta_{i_2,i_7,i_{13}}$ has length 3, which imposes the condition $i_{13} - i_{12} = 3$.  Here we have embedded $(1,2,\ldots, 14)$ as a subcyclic order $(i_1,i_2,\ldots, i_{14})$ of $(1,2,\ldots, n)$ for some large $n\gg 14$.}
	\label{fig:hexagonzonotopaldeformation}
\end{figure}

\section{Discussion}
In this paper, we have studied the problem of factorization for CEGM amplitudes $m^{(k)}_n$, that is, to classify the residues $m^{(k)}_n$ which are product of objects of the same type; for the first time, we have been able to peek deep into the factorization poset of the positive tropical Grassmannian $\text{Trop}^+G(k,n)$ for all (k,n).  This borders on an important problem in combinatorial geometry, where one would like to explore higher codimension faces of secondary polytopes.

One pressing question is to construct systematically the \textit{simple} poles of $m^{(k)}_n$; this is equivalent to classifying the coarsest regular subdivisions of hypersimplices $\Delta_{k,n}$.  We have summarized our first steps in this direction and will return to them in \cite{EarlyPrep}.  

A fascinating direction starts from the very basic observation that CEGM amplitudes (at least, in the special case of the standard cyclic order $\mathbb{I} = (1,2,\ldots, n)$, so that $m^{(k)}_n := m^{(k)}_n(\mathbb{I},\mathbb{I})$) and amplituhedra $\mathcal{A}_{n,d,m}$, introduced by Arkani-Hamed and Trnka in \cite{ArkaniHamedTrnka}, are constructed from the nonnegative Grassmannian, but by quite different means.  We would like to ask what lies between the two; some relations with subdivisions have been explored with the $m=2$ amplituhedron \cite{LPW2020,PSW2021}; can one go further?  

Another interesting question is to investigate how our proposal for factorization might extend towards the work of Arkani-Hamed, He and Lam in the context of certain stringy integrals \cite{AHL2019Stringy}.  Generalized biadjoint amplitudes, roughly speaking, compute stringy integrals to leading order, so it seems natural to ask what happens when their higher order terms are taken into account.

Finally, let us point out that technology was developed in \cite{E2021} that can be used to further check (and possibly prove) our conjectures.  Indeed, in \cite{E2021}, there were two main results relevant here.  First, that \textit{generalized positive roots} 
$$\gamma_J(\alpha) = \sum_{i=1}^{k-1} \alpha_{i,\lbrack j_i-(i-1),j_{i+1}-i-1\rbrack}$$
 are (in duality with) the normal vectors to the $\binom{n}{k}-n$ facets of the PK polytope \cite{CE2020B}, and second that any positive tropical Pl\"{u}cker vector projects uniquely onto a positive linear combination of generalized roots, which in turn can be uniquely characterized by a certain weighted noncrossing collection of noncrossing $k$-tuples, in the noncrossing complex $\mathbf{NC}_{k,n}$!  These collections appear to carry a nontrivial amount of information about coarsest positroidal subdivisions in particular, as we have suggested in Conjecture \ref{conj: noncrossing expansion coarsest subdivision}.  This conjecture aims to construct quite general families of coarsest positroidal subdivisions and it should be possible to take certain degenerations in various ways to obtain more, and in particular to finally give a systematic construction of the simple poles of $m^{(k)}_n$, see \cite{EarlyPrep}.

\section*{Acknowledgements}
This work has benefited from discussions with and support from many people.  We thank Nima Arkani-Hamed, Johannes Henn, Lukas Kuhne, Leonid Monin, Matteo Parisi, Benjamin Schroeter, Bernd Sturmfels, Alexander Tumanov, Bruno Umbert, Lauren Williams, and Yong Zhang, and especially Freddy Cachazo, for stimulating discussions and helpful comments on a draft.  We also thank Jianrong Li for helpful discussions and for proofreading.

We thank the Institute for Advanced Study for excellent working conditions while this project was initiated.  This research received funding from the European Research Council (ERC) under the European Union's Horizon 2020 research and innovation programme (grant agreement No 725110), Novel structures in scattering amplitudes.

	\appendix
	
\section{Kinematic Blades}\label{sec: kinematic blades}

In what follows, we give the algorithm to translate between the combinatorial data for the two formulas for the planar basis elements.  The objective is to produce for each $J \in \binom{\lbrack n\rbrack}{k}^{nf}$ a decorated ordered set partition $(\mathbf{S},\mathbf{r})$, such that 
$$\eta_J = \eta_{(\mathbf{S},\mathbf{r})}.$$
See Proposition \ref{prop: equality planar basis}; the proof relies on the characterization of matroidal blade arrangements \cite{Early19WeakSeparationMatroidSubdivision} and a straightforward calculation with piecewise-linear functions.

Here we summarize the data structure for the formula.

\begin{itemize}
	\item Input: a $k$-element subset $J \in \binom{\lbrack n\rbrack}{k}$.
	\item Output: a decorated ordered set partition $(\mathbf{S},\mathbf{r})$  of type $\Delta_{k,n}$.
	\item Key property (Proposition \ref{prop: equality planar basis}): $\eta_J = \eta_{(\mathbf{S},\mathbf{r})}$.
\end{itemize}

\begin{defn}[\cite{Early19WeakSeparationMatroidSubdivision}]\label{def: desp bijection}
	Given\footnote{In practice we will usually assume that $J$ is nonfrozen, that is not a single cyclic interval $J = \{j,j+1,\ldots, j+k-1\}$} $J \in \binom{\lbrack n\rbrack}{k}$, we construct a decorated ordered set partition of type $\Delta_{k,n}$ by 
	$$(\mathbf{S},\mathbf{r}) = ((S_1)_{r_1},\ldots, (S_d)_{r_d}),$$
	where $(S_j,r_j)  =  (J_j\cup C_j , \vert J_j \vert ),$ and where $J_j, C_j$ are constructed as follows.
	Let
	$$J=\{i_1,i_1+1,\ldots, i_1+({\lambda_1}-1)\} \cup \{i_2,i_2+1,\ldots, i_2+({\lambda_2}-1)\}\cup \cdots\cup \{i_{d},i_{d}+1,\ldots, i_d + (\lambda_{d}-1)\},$$
	be the decomposition of $J$ into cyclic intervals, so that we have $\lambda_1+\cdots+ \lambda_d = k$.   Denote $J_j = \{i_j,i_j+1,\ldots, i_j+({\lambda_j}-1)\}$ where without loss of generality we assume that $1\in J_1$.  Let $(C_1,\ldots, C_d)$ be the interlaced complement to the intervals in $J$, so that we have the concatenation to the standard cyclic order
	$$(C_1,J_1,C_2,J_2,\ldots, C_d,J_d)=(\ldots, n,1,2,\ldots).$$
	In other words, the intervals $J_j$ are the positions of consecutive one's, and the intervals $C_j$ are the positions of consecutive zero's, in the 0/1 vector $e_J = \sum_{i\in J} e_i$.  Then 
	$$(\mathbf{S},\mathbf{r}) = ((S_1)_{r_1},(S_2)_{r_2},\ldots, (S_d)_{r_d})$$
	is the decorated ordered set partition of type $\Delta_{k,n}$ defined by 
	\begin{eqnarray}\label{eq: dOSP}
		(S_j,r_j) & = & (J_j\cup C_j , \vert J_j \vert ).
	\end{eqnarray} 
\end{defn}

Let us set some notation for the proof of the Proposition \ref{prop: equality planar basis}.  For each $J \in \binom{\lbrack n\rbrack}{k}^{nf}$, define a piecewise-linear function $\rho_J:\mathbb{R}^n \rightarrow \mathbb{R}$, by 
$$\rho_J(x) = \min\{L_1(x-e_J),\ldots, L_n(x-e_J)\},$$
where we remind that $L_1,\ldots, L_n$ are given by 
\begin{eqnarray*}
	L_j(x) & = & x_{j+1}+2x_{j+2} + \cdots +(n-1)x_{j-1}.
\end{eqnarray*}
Then we can recover our formula for $\eta_J$ by localizing $\rho_J(x)$ to the vertex set $\left\{e_I: I \in \binom{\lbrack n\rbrack}{k}\right\}$ of the hypersimplex $\Delta_{k,n}$,
$$\eta_J = -\frac{1}{n}\sum_{I \in \binom{\lbrack n\rbrack}{k}}\rho_J(e_I)\mathfrak{s}_I.$$

\begin{prop}\label{prop: equality planar basis}
	Given $J\in \binom{\lbrack n\rbrack}{k}^{nf}$, let $(\mathbf{S},\mathbf{r}) = ((S_1)_{r_1},\ldots, (S_d)_{r_d})$ be the decorated ordered set partition defined in Equation \eqref{eq: dOSP}.  Then we have the equality
	$$\eta_{J} = \eta_{(\mathbf{S},\mathbf{r})}.$$
\end{prop}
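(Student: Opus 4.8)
The plan is to establish the identity at the level of the piecewise-linear functions $\rho_J,\rho_{(\mathbf S,\mathbf r)}\colon \mathbb R^n\to\mathbb R$ on the affine span $\{x:\sum_i x_i=k\}$ of $\Delta_{k,n}$, and then localize to vertices. Recall that $\eta_J$ has coefficient vector $-\tfrac1n\big(\rho_J(e_I)\big)_I$ and $\eta_{(\mathbf S,\mathbf r)}$ has coefficient vector $-\tfrac1d\big(\rho_{(\mathbf S,\mathbf r)}(e_I)\big)_I$. Since the momentum-conservation relations cutting out the kinematic space span exactly $\mathrm{Lin}_{k,n}=\operatorname{span}\{\sum_{J\ni i}e^J\}$, the two functionals agree on the kinematic space precisely when their coefficient vectors differ by an element of $\mathrm{Lin}_{k,n}$; equivalently, when
\[
\tfrac1n\rho_J(e_I)-\tfrac1d\rho_{(\mathbf S,\mathbf r)}(e_I)=\sum_{i\in I}c_i\qquad\text{for all }I\in\binom{[n]}{k}
\]
for some constants $c_1,\dots,c_n$, i.e.\ when $\tfrac1n\rho_J-\tfrac1d\rho_{(\mathbf S,\mathbf r)}$ restricts to an affine-linear function on $\{x:\sum_i x_i=k\}$.

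First I would reduce $\rho_J$ from a minimum over all $n$ cyclic rotations to a minimum over $d$ of them. Writing $L_j(x)=\sum_m \ell_j(m)\,x_m$, where $\ell_j(m)\in\{0,1,\dots,n-1\}$ counts the forward cyclic steps from $j$ to $m$, one has $L_j(y)-L_{j+1}(y)=(\sum_i y_i)-n\,y_j$, so for $y=x-e_J$ on $\{x:\sum_i x_i=k\}$ one gets $L_j(x-e_J)\le L_{j+1}(x-e_J)$ iff $x_j\ge (e_J)_j$. Decomposing $e_J$ into its cyclic runs $(C_1,J_1,\dots,C_d,J_d)$ as in Definition~\ref{def: desp bijection}, this makes $j\mapsto L_j(x-e_J)$ weakly increasing along each $0$-run $C_a$ and weakly decreasing along each $1$-run $J_a$ when $x\in\Delta_{k,n}$, so the cyclic minimum is attained among the $d$ rotations indexed by $\min C_1,\dots,\min C_d$:
\[
\rho_J(x)=\min_{a=1}^{d}L_{\min C_a}(x-e_J)\qquad\text{on }\{\textstyle\sum_i x_i=k\}.
\]
Definition~\ref{def: desp bijection} pairs the branch $L_{\min C_a}(x-e_J)$ with the branch $M_{(\mathbf S,\mathbf r)_a}(x)$ of $\rho_{(\mathbf S,\mathbf r)}$; this is also the pairing predicted by the matroidal blade arrangement description of \cite{Early19WeakSeparationMatroidSubdivision}, which identifies the linearity region of $L_{\min C_a}(x-e_J)$ on $\Delta_{k,n}$ with that of $M_{(\mathbf S,\mathbf r)_a}$.

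The heart of the proof is then the purely linear identity
\[
\tfrac dn\,L_{\min C_a}(x-e_J)=M_{(\mathbf S,\mathbf r)_a}(x)+\mu(x)\qquad\text{on }\{\textstyle\sum_i x_i=k\},
\]
with one and the same linear form $\mu$ for every $a=1,\dots,d$. To prove it, expand $L_{\min C_a}(x)$ using the fact that each block $S_b=J_b\cup C_b$ of $\mathbf S$ is a cyclic interval: the coefficient of $x_m$ for $m\in S_b$ equals (the offset of $m$ within $S_b$, counted from the first element of $S_b$) plus $\big(|S_a|+|S_{a+1}|+\cdots+|S_{b-1}|\big)$, where the block-internal offset does not depend on $a$. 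Gathering the offsets into an $a$-independent linear form, rewriting the partial block-sums $x_{S_{a+1}\cup\cdots\cup S_{a+\ell}}$ with the help of $\sum_i x_i=k$, and carrying the affine constant $L_{\min C_a}(e_J)$ with the help of $\sum_j r_j=k$, one matches the remainder term by term against $M_{(\mathbf S,\mathbf r)_a}(x)$ and reads off that $\mu$ is the same for all $a$. Given this, $\min$ over $a$ commutes with adding the fixed form $\mu$ and with scaling by $d/n>0$, so $\tfrac dn\rho_J=\rho_{(\mathbf S,\mathbf r)}+\mu$ on $\{\sum_i x_i=k\}$; restricting to vertices and dividing by $d$ gives $\tfrac1n\rho_J(e_I)-\tfrac1d\rho_{(\mathbf S,\mathbf r)}(e_I)=\tfrac1d\mu(e_I)$, which has the required form $\sum_{i\in I}c_i$. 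Hence $\eta_J=\eta_{(\mathbf S,\mathbf r)}$.

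The main obstacle is this last identity, specifically certifying that $\mu$ is genuinely independent of $a$: the bookkeeping must keep straight which rotation $\min C_a$ is paired with which block index under Definition~\ref{def: desp bijection}, must carry the $a$-dependent constant $L_{\min C_a}(e_J)$ correctly, and must produce $a$-independence as an actual equality of linear forms rather than mere continuity of $\tfrac1n\rho_J-\tfrac1d\rho_{(\mathbf S,\mathbf r)}$ across the walls of the common subdivision — the two are not the same in general, and this is where one uses that the subdivisions in play are multi-splits, all of whose maximal cells share a single central face (as already exploited in the proof of Theorem~\ref{thm: cone in tropGrass A}). The remaining manipulations are routine.
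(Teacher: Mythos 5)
Your proposal is correct but takes a genuinely different route from the paper. The paper's proof is soft: it combines Proposition 6 and Theorem 17 of \cite{Early19WeakSeparationMatroidSubdivision} to show that $\rho_J\big\vert_{\Delta_{k,n}}$ and $\rho_{(\mathbf{S},\mathbf{r})}\big\vert_{\Delta_{k,n}}$ are linear on exactly the same $d$ hypersimplicial plates, i.e.\ induce the same $d$-split; since a $d$-split is a \emph{coarsest} regular subdivision, the two height functions must then agree up to an element of the lineality space and an overall positive scaling, the lineality part dies against momentum conservation, and the matching of normalizations is explicitly left to the reader. You instead argue branch by branch: your reduction of $\rho_J$ to a minimum over the $d$ rotations $L_{\min C_a}$ (via $L_j(y)-L_{j+1}(y)=\sum_i y_i-ny_j$ and the monotonicity along $0$-runs and $1$-runs) is correct on $\Delta_{k,n}$, which is all that is needed since only vertices $e_I$ are evaluated, and your key identity $\tfrac{d}{n}L_{\min C_a}(x-e_J)=M_{(\mathbf{S},\mathbf{r})_{\sigma(a)}}(x)+\mu(x)$ with $\mu$ independent of $a$ does hold (any bijection $\sigma$ between the two sets of branches suffices, since the minimum runs over all of them). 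What your route buys is that the scaling $d/n$ and the normalization come out explicitly rather than being checked separately, and you need not import the external identification of $\rho_J$'s subdivision as a $d$-split; what it costs is that the $a$-independence of $\mu$ carries essentially the whole weight of the proof and you have only sketched that computation. You are also right that continuity of the difference across walls would not by itself give a global linear discrepancy --- the paper's way around this is the coarsest-subdivision argument, yours is the explicit branch identity; either is legitimate, but one of the two must actually be carried out in full.
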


\begin{proof}
	Let $J \in \binom{\lbrack n\rbrack}{k}^{nf}$ and $(\mathbf{S},\mathbf{r})$ be as in the statement of Definition \ref{def: desp bijection}.  We will show that the two piecewise-linear functions $\rho_J(x)$ and $\rho_{(\mathbf{S},\mathbf{r})}(x)$, when restricted to the hypersimplex $\Delta_{k,n}$, are linear on exactly the same $d$ matroid polytopes; from this it will follow that they induce the same regular matroid subdivision of $\Delta_{k,n}$.  Since they both induce coarsest subdivisions, they must differ only by an element of the lineality space together with a possible over all (positive) scaling.
	
	In more detail, we need to show that the piecewise-linear functions $\rho_J\big\vert_{\Delta_{k,n}}$ and $\rho_{(\mathbf{S},\mathbf{r})}\big\vert_{\Delta_{k,n}}$ are linear on exactly the same set of $d$ cells, \textit{hypersimplicial plates} in $\Delta_{k,n}$, denoted 
	$$\lbrack (S_j)_{r_j},(S_{j+1})_{r_{j+1}},\ldots, (S_{j-1})_{r_{j-1}}\rbrack,$$ 
	that are characterized by the facet inequalities,
	\begin{eqnarray*}
		x_{S_j} & \ge & r_j\\
		x_{S_j} + x_{S_{j+1}} & \ge & r_j + r_{j+1}\\
		& \vdots & \\
		x_{S_j} + x_{S_{j+1}} + \cdots + x_{S_{j-2}} & \ge & r_j + r_{j+1} + \cdots + r_{j-2},
	\end{eqnarray*}
	for $j=1,\ldots, d$.  The union of the internal faces of the subdivision is called a blade\footnote{Blades were first defined by A. Ocneanu in \cite{OcneanuVideo}.}, and is denoted $(((S_j)_{r_j},(S_{j+1})_{r_{j+1}},\ldots, (S_{j-1})_{r_{j-1}}))$.
	
	The proof the restriction of $\rho_J$ induces a subdivision of this form follows by combining \cite[Proposition 6]{Early19WeakSeparationMatroidSubdivision} and \cite[Theorem 17]{Early19WeakSeparationMatroidSubdivision}: it is a (coarsest) matroid subdivision, called a $d$-split (\cite{Herrmann,SchroeterMultisplits}).  This implies that the two restricted functions differ by an element of the lineality space; since the subdivision is coarsest, the only other degree of freedom is a possible over all normalization.  Checking that the normalizations given match is left to the reader.  Clearly the relations coming from momentum conservation,
	$$\sum_{J\ni j}\mathfrak{s}_J = 0,\ j=1,\ldots, n$$
	are in duality with the generators of the lineality subspace 
	$$\text{span}\left\{\sum_{J\ni j}e^J: j=1,\ldots, n\right\}$$
	of $\mathbb{R}^{\binom{n}{k}}$, where the elements $e^J$ for $J \in \binom{\lbrack n\rbrack}{k}$ are the standard basis.
	
	It follows that (modulo momentum conservation)
	$$\sum_{I\in \binom{\lbrack n\rbrack}{k}}\rho_J(e_I)\mathfrak{s}_I = \sum_{I\in \binom{\lbrack n\rbrack}{k}}\rho_{(\mathbf{S},\mathbf{r})}(e_I)\mathfrak{s}_I,$$
	hence
	$$\eta_J = \eta_{(\mathbf{S},\mathbf{r})}.$$
\end{proof}

\begin{example}
	It straightforward to check the following identities; here the right column indicates the corresponding generalized biadjoint scalar.
	\begin{eqnarray*}
		\eta_{(12_1 3456_1)}  = & \eta_{26}:&\ \ m^{(2)}_6,\\
		\eta_{(123_1 456_2)}  =  &\eta_{356}:&\ \ m^{(3)}_6,\\
		\eta_{(712_1 34_1 56_1)}  = & \eta_{246}:&\ \ m^{(3)}_7,\\
		\eta_{(12_1 345_1 6789_2)}  =  &\eta_{2589}:&\ \ m^{(4)}_9,\\
		\eta_{(12_1 34_1 567_1 89_1)}   =& \eta_{2479}:& \ \ m^{(4)}_9.
	\end{eqnarray*}
	Then for instance one has the following expression for the planar kinematic invariant $s_{12} = s_{345}$ which gives rise to a nonzero residue for $m^{(2)}_5$, respectively
	\begin{eqnarray*}
	\eta_{(12_1 345_1)} &  = & -\mathfrak{s}_{12}-\mathfrak{s}_{34}-\mathfrak{s}_{35}-\mathfrak{s}_{45}-\frac{3 \mathfrak{s}_{13}}{2}-\frac{3 \mathfrak{s}_{14}}{2}-\frac{3 \mathfrak{s}_{15}}{2}-\frac{3 \mathfrak{s}_{23}}{2}-\frac{3 \mathfrak{s}_{24}}{2}-\frac{3 \mathfrak{s}_{25}}{2}\\
	& = & \eta_{25},
\end{eqnarray*}
	and for example
	\begin{eqnarray*}
	\eta_{(14_1 26_1 35_1)} & = & \frac{1}{3}\left(-6 \mathfrak{s}_{123}-4 \mathfrak{s}_{124}-6 \mathfrak{s}_{125}-5 \mathfrak{s}_{126}-5 \mathfrak{s}_{134}-4 \mathfrak{s}_{135}-6 \mathfrak{s}_{136}-5 \mathfrak{s}_{145}-4 \mathfrak{s}_{146}-6 \mathfrak{s}_{156}\right.\\
& - & \left.6 \mathfrak{s}_{234}-5 \mathfrak{s}_{235}-4 \mathfrak{s}_{236}-6 \mathfrak{s}_{245}-5 \mathfrak{s}_{246}-4 \mathfrak{s}_{256}-4 \mathfrak{s}_{345}-6 \mathfrak{s}_{346}-5 \mathfrak{s}_{356}-6 \mathfrak{s}_{456}\right)\\
& = &  -\eta _{124}+\eta _{125}-\eta _{135}+\eta _{136}-\eta _{146}-\eta _{236}+\eta _{245}+\eta _{246}-\eta _{256}.
	\end{eqnarray*}
\end{example}

\section{Positive Parametrization}\label{sec:positive parametrization}
In this Appendix we construct an embedding 
$$(\mathbb{CP}^{n-k-1})^{\times (k-1)} \hookrightarrow X(k,n)$$
of a Cartesian product of projective spaces into $X(k,n)$.

We first define a $(k-1)\times (n-k-1)$ polynomial-valued matrix $M_{k,n}$ with entries $m_{i,j}(x)$, with $(i,j) \in \lbrack 1,k-1\rbrack \times \lbrack 1,n-k\rbrack$, defined by 
\begin{eqnarray*}
	m_{i,j}(\{x_{a,b}: (a,b) \in \lbrack i,k-1\rbrack \times \lbrack 1,j\rbrack\}) & = & \sum_{1\le b_i\le b_{i+1}\le \cdots \le b_{k-1}\le j}\left(x_{i,b_{i}}x_{i+1,b_{i+1}}\cdots x_{k-1,b_{k-1}}\right).
\end{eqnarray*}
For instance, fixing $(k,n) = (4,n)$ then 
\begin{eqnarray*}
	m_{1,2} & = & x_{1,1} x_{2,1} x_{3,1}+x_{1,1} x_{2,1} x_{3,2}+x_{1,1} x_{2,2} x_{3,2}+x_{1,2} x_{2,2} x_{3,2}\\
	m_{2,3} & = & x_{2,1} x_{3,1}+x_{2,1} x_{3,2}+x_{2,2} x_{3,2}+x_{2,1} x_{3,3}+x_{2,2} x_{3,3}+x_{2,3} x_{3,3}\\
	m_{3,4} & = & x_{3,1}+x_{3,2}+x_{3,3}+x_{3,4}.
\end{eqnarray*}

For the embedding $(\mathbb{CP}^{n-k-1})^{\times k-1} \hookrightarrow X(k,n)$, we construct a $(k-1)\times (n-k)$ matrix with $M_{k,n}$ as its upper right block:
\begin{eqnarray}\label{eq: pos parametrization}
	M & = & \begin{bmatrix}
		1 &  &  & 0 & m_{1,1}& \cdots  & m_{1,n-k} \\
		& \ddots &  &  &\vdots & \ddots & \vdots \\
		&  & 1 &  & m_{k-1,1} & & m_{k-1,n-k} \\
		0&  &  & 1 & 1 & \cdots & 1 \\
	\end{bmatrix}.
\end{eqnarray}
For instance, for rank $k=3$ we have
\begin{eqnarray}
	M_{3,6} & = & \begin{bmatrix}
		m_{1,1} & m_{1,2} & m_{1,3}\\
		m_{2,1} & m_{2,2} & m_{2,3}
	\end{bmatrix}\\	
	& = & \begin{bmatrix}
		x_{1,1}x_{2,1} & x_{1,1}x_{2,12}+x_{1,2}x_{2,2}  & x_{1,1}x_{2,123} + x_{1,2}x_{2,23} + x_{1,3}x_{2,3} \\
		x_{2,1} & x_{2,12} & x_{2,123}\nonumber
	\end{bmatrix}
\end{eqnarray}
and for the embedding we have
$$M = \begin{bmatrix}
	1 & 0 & 0 & x_{1,1}x_{2,1} & x_{1,1}x_{2,12}+x_{1,2}x_{2,2} & x_{1,1}x_{2,123} + x_{1,2}x_{2,23} + x_{1,3}x_{2,3} \\
	0& 1 & 0 & x_{2,1} & x_{2,12} & x_{2,123}\\
	0 & 0 & 1 & 1 & 1 & 1 
	\nonumber
\end{bmatrix}.$$

\section{CEGM Amplitudes}

Fixing a projective frame on $\mathbb{CP}^{k-1}$, say the standard one
$$(g_1,\ldots, g_{k+1})=(e_1,e_2,\ldots, e_k,e_1+\cdots+e_k),$$
the scattering equations are defined with respect to inhomogeneous coordinates
$$\{z_{i,j}:i=1,\ldots, k-1,\ j = k+2,\ldots, n\}.$$
The affine chart then becomes
\begin{eqnarray}\label{eq:affine chart}
	\mathcal{C}=\left\{\begin{bmatrix}
		1 & 0 & \cdots  & 0 & 0 & 1 & z_{1,k+2} &   & z_{1n} \\
		0 & 1 &  &  & \vdots  & 1 & z_{2,k+2} & & z_{2n}\\
		\vdots  &  & \ddots &  &  & \vdots  & \vdots &\cdots  & \\
		0 & \cdots  &  & 1 & 0  & 1 & z_{k-1,k+2} &  & z_{k-1,n}\\
		0 & 0 & \cdots & 0 & 1 & 1 & 1 &  & 1
	\end{bmatrix}: z_{ij}\in\mathbb{C}\right\}.
\end{eqnarray}
\begin{defn}
	With respect to the given chart $\mathcal{C}$, the scattering equations are the $(k-1)(n-k-1)$ equations
	$$\left\{\frac{\partial \mathcal{P}_{k,n}}{\partial z_{ij}}: 1\le i \le k-1,\ k+2\le j\le n \right\},$$
	where $z_{ij}$ are the inhomogeneous coordinates on the chart $\mathcal{C}$.
\end{defn}
\begin{rem}
	Of course, one can show that solutions to the scattering equations are actually independent of the chart $\mathcal{C}$.  
\end{rem}

\subsection{Jacobian Matrix}

In the computation of the generalized biadjoint scalar $m^{(k)}(\alpha,\alpha)$, one finds the so-called reduced Jacobian determinant $\det'\Phi^{(k)}$ of the gradient of the potential function $\mathcal{P}_{k,n}$.  One often (but not always) fixes an affine chart as in Equation \ref{eq:affine chart}.  We choose not to; the same procedure still works, but now there is one subtle issue: the Hessian matrix how has a nontrivial kernel.  There is a solution which introduces a ``$\det'$'' Hesssian determinant, which we now describe.

First define the Hessian matrix 
$$\Phi^{(k)} = \left(\frac{\partial^2 \mathcal{P}_{k,n}}{\partial w_{ab}\partial w_{cd}}\right)_{(a,b),(c,d)\in \{1,\ldots, k-1\}\times \{1,\ldots, n\}}.$$
Here $(w_{1j},w_{2j},\ldots, w_{k-1,j},1)$ is an affine chart on the $j^{th}$ copy of $\mathbb{CP}^{k-1}$.

Denote by 
$$V_{i_1\cdots i_{k+1}} := \prod_{j=1}^{k+1}\det(g_{i_1},\ldots \widehat{g_{i_j}}\cdots g_{i_{k+1}}),$$
which is a natural $(k+1)\times(k+1)$ generalization of the usual $3\times 3$ Vandermonde determinant in the case $k=2$,  here for any $(k+1)$-element subset $\{i_1,\ldots, i_{k+1}\} \in \binom{\lbrack n\rbrack}{k+1}$ of $\{1,\ldots, n\}$.

Following \cite{CEGM2019}, choose any pair of $(k+1)$-element subsets $A, B$ of the columns $\{1,\ldots, n\}$, let 
$$\lbrack \Phi^{(k)}\rbrack^A_{B} = \left(\frac{\partial^2 \mathcal{P}_{k,n}}{\partial w_{a_1a_2}\partial w_{b_1b_2}}\right)^{(a_1,a_2)\in\lbrack k-1\rbrack \times(\lbrack n\rbrack \setminus A)}_{(b_1,b_2)\in\lbrack k-1\rbrack \times(\lbrack n\rbrack \setminus B)}$$
be the \textit{reduced} Hessian matrix of $\mathcal{P}_{k,n}$, which can be obtained directly from the Hessian matrix $\Phi^{(k)}$ by deleting from it all rows labeling points in $A$ and all columns labeling points in $B$.

\begin{rem}
	The ratio 
	$$\frac{\det\left(\lbrack \Phi^{(k)}\rbrack^A_{B}\right)}{V_A V_B}$$
	is independent of the choice of subsets $A,B \subset \lbrack n\rbrack$.  
	
	Therefore one defines
	$${\det}'\left( \Phi^{(k)}\right) = \frac{\det\left(\lbrack \Phi^{(k)}\rbrack^A_{B}\right)}{V_A V_B},$$
	where it is understood that the choice of $(k+1)$-element subsets $A,B \subset  \lbrack n\rbrack$ is fixed at the outset.
\end{rem}

\subsection{Generalized Biadjoint Scalar Amplitudes and the Global Schwinger Parametrization}\label{subsec:generalizedBiadjointScalar}
For a fixed cyclic order $\alpha = (\alpha_1,\ldots, \alpha_n)$, the generalized Parke-Taylor factor is given by 
$$PT^{(k)}\left(\alpha\right) = \frac{1}{p_{\alpha_1\alpha_2\cdots \alpha_k}p_{\alpha_2\alpha_3\cdots \alpha_{(k+1)}}\cdots p_{\alpha_n\alpha_1\cdots \alpha_{(k-1)}}}.$$

\begin{defn}
	For any pair of cyclic orders $\alpha = (\alpha_1\cdots\alpha_n)$ and $\beta = (\beta_1\cdots \beta_n)$ on $\lbrack n\rbrack = \{1,\ldots, n\}$, define a function $m^{(k)}(\alpha,\beta)$, by 
	\begin{eqnarray*}
		m^{(k)}(\alpha,\beta) & = & \sum_{\text{soln}}\left(\frac{1}{\det'\left(\Phi^{(k)}\right)}PT^{(k)}\left(\alpha\right)PT^{(k)}\left(\beta\right)\bigg\vert_{\text{soln}}\right),
	\end{eqnarray*}
	Here the sum is over all solutions to the scattering equations. 

	For short, when $\alpha = \beta = (1,2,\ldots, n)$, we define 
	$$m^{(k)}_n = m^{(k)}(\alpha,\alpha).$$
\end{defn}
Sometimes to be explicit we include the dependence on the kinematic data, that is $m^{(k)}_n(\mathfrak{s})$.

A second definition using the positive tropical Grassmannian was introduced in \cite{CE2020B}.  Let $P_J(y_{i,j})$ be the tropicalization of the Pl\"{u}cker variable $p_J(x_{i,j})$, evaluated on the parametrization $\mathbf{M}$ in Appendix \ref{sec:positive parametrization}.

Following \cite{CE2020B}, denote by $\mathcal{F}^{(k)}_n$ the function,
$$\mathcal{F}^{(k)}_n(y;\mathfrak{s}) = \sum_{J \in \binom{\lbrack n\rbrack}{k}}P_J(y)\mathfrak{s}_J,$$
where $P_J(y)$ are the (tropicalized) Pl\"{u}cker coordinates, evaluated on the above parametrization.  An equivalent expression for this is
$$\mathcal{F}^{(k)}_n(y; \mathfrak{s}) = \sum_{J\in \binom{\lbrack n\rbrack}{k}^{nf}} \omega_J(y) \eta_J(\mathfrak{s}),$$
where $\omega_J$ are certain tropical planar cross-ratios, see \cite{Early2020WeightedBladeArrangements,E2021} for details.  When there is no risk of confusion, we use the same notation for the parametrization of the positive tropical Grassmannian, which takes values in $\mathbb{R}^{\binom{n}{k}}$ rather than in the dual (kinematic) space,
$$\mathcal{F}^{(k)}_n(y) = \sum_{J\in \binom{\lbrack n\rbrack}{k}^{nf}} \omega_J(y) \mathfrak{h}_J,$$

\begin{prop}[\cite{CE2020B}]
	We have 
	$$m^{(k)}_n(\mathfrak{s}) = \int_{\mathbb{T}^{k-1,n-k}}\exp\left(-\mathcal{F}^{(k)}_n(y; \mathfrak{s})\right)dy.$$
	for all values of $(\mathfrak{s}_J)$ such that the integral converges.  Here we remind that $\mathbb{T}^{k-1,n-k} = \mathbb{T}^{n-k} \times \cdots \times\mathbb{T}^{n-k}$ is the Cartesian product of $k-1$ tropical tori $\mathbb{T}^{n-k} = \mathbb{R}^{n-k}\slash \mathbb{R}(1,\ldots, 1)$.
\end{prop}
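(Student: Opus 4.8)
I would evaluate the two sides separately and recognize that both equal the same rational function of the kinematic data: a sum over the maximal cones of $\text{Trop}^+G(k,n)\slash\text{Lin}_{k,n}$ of products of reciprocals of kinematic blades. Throughout I identify $\mathbb{T}^{k-1,n-k}$ with $\text{Trop}^+G(k,n)\slash\text{Lin}_{k,n}$ via the Speyer--Williams map $\text{Trop}\Psi$, and I use the rewriting $\mathcal{F}^{(k)}_n(y;\mathfrak{s}) = \sum_{J\in\binom{[n]}{k}^{nf}}\omega_J(y)\,\eta_J(\mathfrak{s})$ recorded in the body of the paper, where the $\omega_J$ are the tropical planar cross-ratios.

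I would handle the right-hand side as follows. Each $\omega_J$ is piecewise-linear in $y$, so $\mathcal{F}^{(k)}_n(y;\mathfrak{s})$ is piecewise-linear in $y$, and for generic $(\mathfrak{s}_J)$ its maximal domains of linearity are exactly the maximal cones $C$ of the fan $\text{Trop}^+G(k,n)\slash\text{Lin}_{k,n}$; each such $C$ is the set of positive tropical Pl\"ucker vectors inducing a fixed finest positroidal subdivision $\mathcal{S}_C$ of $\Delta_{k,n}$. On $C$ the restriction $\mathcal{F}^{(k)}_n|_C$ is a genuine linear function of $y$ whose coefficients are the blades $\eta_{(\rho)}(\mathfrak{s})$ indexed by the rays $\rho$ of $C$, i.e.\ by the internal facets of $\mathcal{S}_C$; integrating $e^{-\mathcal{F}^{(k)}_n}$ over $C$ then yields the canonical rational function of the cone $C$, obtained by fixing any unimodular triangulation of $C$ in the relevant lattice and summing $\prod_{\rho\in\sigma}\eta_{(\rho)}(\mathfrak{s})^{-1}$ over its maximal simplicial subcones $\sigma$. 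Summing over all $C$ gives
$$\int_{\mathbb{T}^{k-1,n-k}}\exp\!\left(-\mathcal{F}^{(k)}_n(y;\mathfrak{s})\right)dy \;=\; \sum_{C}\ \sum_{\sigma\subseteq C}\ \prod_{\rho\in\sigma}\frac{1}{\eta_{(\rho)}(\mathfrak{s})},$$
on the open set of $(\mathfrak{s}_J)$ for which the integral converges. Here one must account for the fact that the finest cones of $\text{Trop}^+G(k,n)$ need not be simplicial --- witness the bipyramidal cones of $\text{Trop}^+G(3,6)$ with $f$-vector $(5,9,6,1)$, which is precisely why prefactors such as $\tfrac{1}{\eta_{(I_1,J_1,K_1)}}+\tfrac{1}{\eta_{(I_1,K_1,J_1)}}$ occur in Conjecture \ref{conjecture bipyramidal factorization A} --- and check unimodularity of the triangulations so that no spurious constants enter the measure.

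For the left-hand side I would use that the CEGM integrand localizes on the critical points of the potential $\mathcal{P}_{k,n}=\sum_J\mathfrak{s}_J\log p_J$: convert the sum over solutions of the scattering equations of $PT^{(k)}(\mathbb{I})^2\big/{\det}'\Phi^{(k)}$ into the multidimensional residue of $PT^{(k)}(\mathbb{I})^2\big/\prod_{i,j}\partial_{z_{ij}}\mathcal{P}_{k,n}$, and then, via the global residue theorem, organize this residue by the boundary strata of the positive configuration space $X(k,n)_{>0}$ --- equivalently, by the facets of the Newton polytope $\mathbf{N}_{k,n}$ lying in its positive part. Iterating this boundary analysis, i.e.\ tracking the hard and soft degenerations of the kinematics that send the individual blades $\eta_{(\rho)}\to 0$, would identify $m^{(k)}_n$ with the same sum over finest positroidal subdivisions weighted by products of reciprocal blades; for $k=2$ this is the Cachazo--He--Yuan theorem writing $m^{(2)}_n$ as the sum over trivalent trees, and for general $k$ it is the statement of \cite{CE2020B}, with the multi-split structure of Theorem \ref{thm: cone in tropGrass A} controlling the iterated residues cone-by-cone. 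Comparing with the expression above for the right-hand side closes the argument.

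The main obstacle is this localization step: proving uniformly in $(k,n)$ that the scattering-equations sum collapses exactly onto the maximal cones of $\text{Trop}^+G(k,n)$ with the stated product-of-blade weights, and in particular that each residue contribution coincides with the canonical rational function of its cone. For $k=2$ this rests on the associahedral fan structure of $\text{Trop}^+G(2,n)$ and is classical; for $k\ge 3$ one must control how the finitely many critical points of $\mathcal{P}_{k,n}$ distribute among the finest cones, which is where the combinatorics of positroid subdivisions enters essentially. By comparison the remaining ingredients --- piecewise-linearity of $\mathcal{F}^{(k)}_n$, the fan isomorphism $\text{Trop}\Psi$, unimodularity of the fine cones, and the normalization of $dy$ on $\mathbb{T}^{k-1,n-k}$ relative to the lineality quotient --- are routine.
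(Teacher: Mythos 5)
The paper does not actually prove this proposition; it is imported verbatim from \cite{CE2020B} and used as a black box, so there is no internal argument to compare yours against. Judged on its own terms, your treatment of the right-hand side is essentially sound and standard: $\mathcal{F}^{(k)}_n(y;\mathfrak{s})$ is an $\mathfrak{s}$-linear combination of tropicalized Pl\"ucker coordinates, its domains of linearity refine to the maximal cones of the Speyer--Williams fan, and integrating the exponential of a linear function over a unimodular simplicial cone produces the product of reciprocals of its values on the ray generators; triangulating the non-simplicial cones (the bipyramids of $\text{Trop}^+G(3,6)$ being the first instance) yields the sum of canonical functions you write down. The one step you should not wave at is the identification of the coefficients of $\mathcal{F}^{(k)}_n\big\vert_C$, expressed in the basis dual to the rays of $C$, with the kinematic blades $\eta_{(\rho)}(\mathfrak{s})$: this rests on the change of basis $\mathcal{F}^{(k)}_n=\sum_J\omega_J\eta_J$ and on Definition \ref{defn: kinematic blade A}, and it is a computation, not a tautology.

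The genuine gap is the left-hand side. The assertion that the sum over solutions of the scattering equations collapses onto the maximal cones of $\text{Trop}^+G(k,n)$ with exactly the canonical-function weights is not a lemma you may defer: it \emph{is} the proposition. Your proposed route (global residue theorem, organize by facets of $\mathbf{N}_{k,n}$, iterate boundary degenerations) is only a heuristic; there is no term-by-term correspondence between critical points of $\mathcal{P}_{k,n}$ and maximal cones (already for $(3,6)$ the number of solutions and the number of cones disagree), so any matching must occur at the level of sums of rational functions, and you supply no mechanism for it. You flag this localization as ``the main obstacle,'' but an argument whose central step is acknowledged as an obstacle is a strategy, not a proof. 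The route actually available in the literature passes through the stringy-integral and saddle-point analysis of \cite{AHL2019Stringy}, in which the field-theory limit of the integral over the positive part of $X(k,n)$ is computed by its tropicalization; that is a substantially different mechanism from the boundary-stratification argument you sketch, and it is where the real work lives.
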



\begin{thebibliography}{}
		\bibitem{LikelihoodDegenerations} D. Agostini, T. Brysiewicz, C. Fevola, L. Kuhne, B. Sturmfels, and S. Telen. ``Likelihood Degenerations.'' arXiv preprint arXiv:2107.10518 (2021).
		\bibitem{ArkaniHamedTrnka} N. Arkani-Hamed and J. Trnka. ``The amplituhedron.'' Journal of High Energy Physics 2014, no. 10 (2014): 1-33.
		\bibitem{AHL2019Stringy} N. Arkani-Hamed, S. He, and T. Lam. ``Stringy canonical forms.'' Journal of High Energy Physics 2021, no. 2 (2021): 1-62.
		\bibitem{arkani2020positive} N. Arkani-Hamed, T. Lam, and M. Spradlin. ``Positive configuration space.'' Communications in Mathematical Physics (2021): 1-46.
		\bibitem{BC2019} F. Borges and F. Cachazo. ``Generalized planar Feynman diagrams: collections.'' Journal of High Energy Physics 2020, no. 11 (2020): 1-28.
		\bibitem{CHY2014} F. Cachazo, Song He, and E. Yuan. ``Scattering of massless particles in arbitrary dimensions.'' Physical review letters 113, no. 17 (2014): 171601.
		\bibitem{CHY2014C} F. Cachazo, S. He, and E. Yuan. ``Scattering of massless particles: scalars, gluons and gravitons.'' Journal of High Energy Physics 2014, no. 7 (2014): 1-33.
		\bibitem{Cachazo2017} F. Cachazo. ``Combinatorial factorization.'' arXiv preprint arXiv:1710.04558 (2017).
		\bibitem{CEGM2019} F. Cachazo, N. Early, A. Guevara, and S. Mizera. ``Scattering equations: from projective spaces to tropical grassmannians.'' Journal of High Energy Physics 2019, no. 6 (2019): 39.
		\bibitem{CE2020} F. Cachazo and N. Early ``Minimal Kinematics: An All k and n Peek into Trop$^+$G (k, n).'' SIGMA. Symmetry, Integrability and Geometry: Methods and Applications 17 (2021): 078.
		\bibitem{CE2020B} F. Cachazo and N. Early. ``Planar Kinematics: Cyclic Fixed Points, Mirror Superpotential, k-Dimensional Catalan Numbers, and Root Polytopes.'' arXiv preprint arXiv:2010.09708 (2020).
		\bibitem{CE2022} F. Cachazo and N. Early. ``Biadjoint Scalars and Associahedra from Residues of Generalized Amplitudes.'' arXiv preprint arXiv:2204.01743 (2022).
		\bibitem{CEU2021} F. Cachazo, N. Early, and B. Umbert. ``Smoothly Splitting Amplitudes and Semi-Locality.'' arXiv preprint arXiv:2112.14191 (2021).
		\bibitem{CUZ2019} F. Cachazo, B. Umbert, and Y. Zhang. ``Singular solutions in soft limits.'' Journal of High Energy Physics 2020, no. 5 (2020): 1-33.
		\bibitem{EarlyBlades} N. Early. ``Honeycomb tessellations and canonical bases for permutohedral blades.'' arXiv preprint arXiv:1810.03246 (2018).
		\bibitem{Early19WeakSeparationMatroidSubdivision} N. Early . ``From weakly separated collections to matroid subdivisions.'' Combinatorial Theory, 2(2), (2022). 
		\bibitem{Early2019PlanarBasis} N. Early. ``Planar kinematic invariants, matroid subdivisions and generalized Feynman diagrams.'' arXiv preprint arXiv:1912.13513 (2019).
		\bibitem{Early2020WeightedBladeArrangements} N. Early. ``Weighted blade arrangements and the positive tropical Grassmannian.'' arXiv preprint arXiv:2005.12305 (2020).
		\bibitem{EarlyPrep} N. Early.  In preparation.
		\bibitem{E2021} N. Early. ``Planarity in Generalized Scattering Amplitudes: PK Polytope, Generalized Root Systems and Worldsheet Associahedra.'' arXiv preprint arXiv:2106.07142 (2021).
		\bibitem{HJJS} S. Herrmann, A. Jensen, M. Joswig, and B. Sturmfels. ``How to draw tropical planes.'' The Electronic Journal of Combinatorics 16, no. 2 (2009).
		\bibitem{Herrmann} S. Herrmann. ``On the facets of the secondary polytope.'' Journal of Combinatorial Theory, Series A 118, no. 2 (2011): 425-447.
		\bibitem{LeclercZelevinsky} B. Leclerc and A. Zelevinsky. ``Quasicommuting families of quantum Pl\"{u}cker coordinates.'' In Kirillov's seminar on representation theory, vol. 35, p. 85. 1998.
		\bibitem{LPW2020} T. Lukowski, M. Parisi, and L. Williams. ``The positive tropical Grassmannian, the hypersimplex, and the m= 2 amplituhedron.'' arXiv preprint arXiv:2002.06164 (2020).
		\bibitem{OcneanuVideo} A. Ocneanu.  ``Higher representation theory.'' Harvard Physics 267, Lecture 34. 2017.
		\bibitem{PSW2021} M. Parisi, M. Sherman-Bennett, and L. Williams. ``The m= 2 amplituhedron and the hypersimplex: signs, clusters, triangulations, Eulerian numbers.'' arXiv preprint arXiv:2104.08254 (2021).
		\bibitem{PKPS}K. Petersen, P. Pylyavskyy, and D. Speyer. ``A non-crossing standard monomial theory.'' Journal of Algebra 324, no. 5 (2010): 951-969.
		\bibitem{Pylyavskyy2009} P. Pylyavskyy. ``Non-crossing tableaux.'' Annals of Combinatorics 13, no. 3 (2009): 323-339.
		\bibitem{GrassmannAssociahedron} F. Santos, C. Stump, and V. Welker. ``Noncrossing sets and a Grassmann associahedron.'' In Forum of Mathematics, Sigma, vol. 5. Cambridge University Press, 2017.
		\bibitem{SchroeterMultisplits} B. Schroeter. ``Multi-splits and tropical linear spaces from nested matroids.'' Discrete \& Computational Geometry 61, no. 3 (2019): 661-685.
		\bibitem{SchroeterThesis} B. Schroeter. ``Matroidal subdivisions, Dressians and tropical Grassmannians.'' (2018).
		\bibitem{SG2019} D. Sep\'ulveda, and A. Guevara. ``A soft theorem for the tropical Grassmannian.'' arXiv preprint arXiv:1909.05291 (2019).
		\bibitem{SpeyerStumfelsTropGrass} D. Speyer and B. Sturmfels. ``The tropical grassmannian.'' Advances in Geometry 4, no. 3 (2004): 389-411.
		\bibitem{SpeyerWilliams2003} D. Speyer and L. Williams. ``The tropical totally positive Grassmannian.'' Journal of Algebraic Combinatorics 22, no. 2 (2005): 189-210.
		\bibitem{SpeyerWilliams2020} D. Speyer, and L. Williams. ``The positive Dressian equals the positive tropical Grassmannian.'' Transactions of the American Mathematical Society, Series B 8, no. 11 (2021): 330-353.
	\end{thebibliography}
\end{document}